\numberwithin{equation}{section}
\let\al=\alpha
\let\f=\frac
\let\pa=\partial
\def\R{\mathbf R}
\newcommand{\beq}{\begin{equation}}
\newcommand{\eeq}{\end{equation}}
\newcommand{\ben}{\begin{eqnarray}}
\newcommand{\een}{\end{eqnarray}}
\newcommand{\beno}{\begin{eqnarray*}}
\newcommand{\eeno}{\end{eqnarray*}}
\newtheorem{theorem}{Theorem}[section]
\newtheorem{lemma}[theorem]{Lemma}
\newtheorem{proposition}[theorem]{Proposition}
\newtheorem{remark}[theorem]{Remark}
\begin{document}

\title[Asymptotic behavior of the steady Prandtl equation]
{Asymptotic behavior of the steady Prandtl equation}

\author{Yue Wang}
\address{School of Mathematical Sciences, Capital Normal University,100048, Beijing,  China}
\email{yuewang37@pku.edu.cn}

\author{Zhifei Zhang}
\address{School of Mathematical Sciences, Peking University, 100871, Beijing, China}
\email{zfzhang@math.pku.edu.cn}

\begin{abstract}
We study the asymptotic behavior of the Oleinik's solution to the steady Prandtl equation when the outer flow $U(x)=1$. Serrin proved that the Oleinik's solution converges to the famous Blasius solution $\bar u$ in $L^\infty_y$ sense as $x\rightarrow+\infty$. The explicit decay estimates of $u-\bar u$ and its derivatives were proved by Iyer[ARMA 237(2020)] when the initial data is  a small localized perturbation of the Blasius profile. In this paper, we prove the explicit decay estimate of
$\|u(x,y)-\bar{u}(x,y)\|_{L^\infty_y}$ for general initial data with exponential decay.  We also prove the decay estimates of its derivatives when the data has an additional concave assumption. Our proof is based on the maximum principle technique. The key ingredient is to find a series of barrier functions.
\end{abstract}

\date{\today}
\maketitle

\section{Introduction}
We study the steady Prandtl equation
\begin{equation}\label{eq:SP}
  \left\{
  \begin{aligned}
    &u\pa_x u +v\pa_{y}u-\pa_{y}^2u=\f {dp} {dx},\quad (x, y)\in \R_+\times \R_+,\\
    &\pa_xu+\pa_y v=0,\\&u|_{x=0}=u_0(y),\quad u|_{y=0}=v|_{y=0}=0,\\
    & \displaystyle\lim_{y\to+\infty} u(x,y)=U(x).
    \end{aligned}
  \right.
\end{equation}
Here $(u,v)$ is the velocity and the outer flow $(U(x),p(x))$ satisfies the Bernoulli's law:
\beno
U(x)U'(x)+p'(x)=0.
\eeno
In this paper, we consider the case when the gradient of pressure $p'(x)=0$. Thus, the outer flow $U(x)$ is a constant in this case. For simplicity, we take $U\equiv1.$

Let us introduce the Von Mises transformation $(x,\psi)$ defined by
\begin{align}\label{psi1}
x=x,\,\, \psi=\psi(x,y)=\int_0^y u(x,y')dy'.
\end{align}
Introduce the new unknown $w(x,\psi)=u(x,y)^2.$  A direct calculation shows that
  \begin{align}\label{PvM}
  \begin{split}
& 2\partial_y u =\partial_\psi w,\quad2\partial^2_{y} u=\sqrt{w}\partial^2_{\psi}w.
 \end{split}
\end{align}
Hence, the Prandtl system is reduced to a parabolic type equation(view $x$ as time direction):
\begin{align}\label{starmain}
\partial_x w=\sqrt{w}\partial_{\psi}^2 w
\end{align}
along with the boundary conditions
\begin{align}\label{IBCW}
\begin{split}
&w(x,0)=0,\quad w(0,\psi)=w_0(\psi),\\
&w(x,\psi)\rightarrow
1\quad \text{as}\,\,\psi\rightarrow +\infty.
\end{split}
\end{align}

Based on the Von Mises transformation and maximum principle technique,  Oleinik proved the following existence and uniqueness result of classical solution(see Theorem 2.1.1 in \cite{Olei}).

\begin{theorem}\label{thm:olei}(Oleinik)
If $\frac{dp}{dx}=0$ and the initial data $u_0$ satisfies
\begin{align}\label{OI}\begin{split}
 &u_0(y)\in C_b^{2,\alpha}\big([0,+\infty)\big)(\al>0),\quad u_0(0)=0,\,\,u'_0(0)>0,\\
 &u_0(y)>0\,\, \text{for}\, \,y\in(0,+\infty),\,\, u''_0(y)=O(y^2),\end{split}
\end{align} then the steady Prandtl equation \eqref{eq:SP} admits a global-in-$x$ solution $u_0\in C^1(\R_+\times\R_+)$ with the following properties: for any $X>0,$

1. $u$ is bounded and continuous in $[0,X]\times \R_+;$

2. $u(x,y)>0$ for $y>0;$

3. $u_{y},u_{yy}$ are bounded and continuous in $[0,X]\times\R_+$;

4. $v,v_{y},u_{x}$ are locally bounded and continuous in $[0,X]\times\R_+.$
\end{theorem}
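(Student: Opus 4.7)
The plan is to work entirely in Von Mises coordinates, solving the degenerate parabolic problem \eqref{starmain}--\eqref{IBCW} first and then inverting the transformation to recover $(u,v)$. Under \eqref{OI} the initial data translates to $w_0(\psi)=u_0(y(\psi))^2$ with $w_0(0)=0$, $w_0'(0)=2u_0'(0)>0$, $w_0(\psi)\to 1$ as $\psi\to\infty$, and $C^{2,\alpha}$ regularity; the compatibility condition $u_0''(y)=O(y^2)$ is tailored to match a local barrier at $\psi=0$.

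First I would introduce a two-parameter regularization: for $\e\in(0,1)$ and $N>0$, solve
\begin{equation*}
\pa_x w^{\e,N}=\sqrt{w^{\e,N}+\e}\,\pa_\psi^2 w^{\e,N}\quad\text{on }(0,X]\times(0,N),
\end{equation*}
with $w^{\e,N}(x,0)=0$, $w^{\e,N}(x,N)=1$, and $w^{\e,N}(0,\cdot)=w_{0,\e,N}$ a smooth monotone cut-off of $w_0$ meeting the boundary data. Since $\sqrt{w^{\e,N}+\e}\ge\sqrt{\e}$, this is a uniformly parabolic quasilinear Cauchy--Dirichlet problem and the Ladyzhenskaya--Solonnikov--Ural'tseva theory yields a smooth solution once an $L^\infty$ bound is known.

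The second step is to derive a priori estimates uniform in $\e$ and $N$. The maximum principle immediately gives $0\le w^{\e,N}\le 1$. The crucial quantitative estimate, on which the invertibility of the Von Mises transformation depends, is the uniform lower bound
\begin{equation*}
\pa_\psi w^{\e,N}(x,0)\ge c_0(X)>0\qquad\text{for all }x\in[0,X].
\end{equation*}
I would establish this by constructing a local sub-barrier of the form $\underline w(x,\psi)=A\psi-B\psi^3$ on a thin strip $\{0\le\psi\le\delta\}$, checking directly that it is a subsolution of the regularized equation and that the hypothesis $u_0''=O(y^2)$ guarantees $w_0\ge \underline w$ at $x=0$; then applying the maximum principle on the strip gives the claimed lower bound, hence $w^{\e,N}\ge c_0\psi$ near $\psi=0$. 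Bounds for $\pa_\psi w$ and $\pa_\psi^2 w$ follow by differentiating the equation once/twice and applying the maximum principle to the resulting linear parabolic equations, using the $C^{2,\alpha}$-regularity of $w_0$ for the initial data.

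Finally, Arzel\`a--Ascoli together with interior Schauder estimates on compact subsets of $(0,X]\times(0,\infty)$ lets me extract a limit $w$ of $w^{\e,N}$ as $\e\to0$, $N\to\infty$ solving \eqref{starmain}--\eqref{IBCW}. Because $w(x,\psi)\ge c_0(X)\psi$ near $\psi=0$, the map $\psi\mapsto y(x,\psi)=\int_0^\psi w(x,\psi')^{-1/2}\,d\psi'$ is a $C^1$ diffeomorphism of $[0,\infty)$ onto itself, and inverting it recovers $u=\sqrt{w}$; the divergence-free condition then defines $v$. Properties 1--4 are read off from the $w$-estimates through the identities \eqref{PvM}. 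Uniqueness is a one-line maximum principle argument applied to $w_1-w_2$ in Von Mises coordinates. The principal obstacle is the uniform lower bound on $\pa_\psi w|_{\psi=0}$: invertibility of the Von Mises map, regularity of $(u,v)$ up to $y=0$, and the asymptotic estimates that are the subject of the rest of the paper all rest on this single quantitative estimate.
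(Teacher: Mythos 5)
This is not a statement the paper proves: Theorem \ref{thm:olei} is Oleinik's classical existence and uniqueness theorem, quoted from Theorem 2.1.1 of \cite{Olei}, and the present paper only uses its conclusions (together with the further properties of $w$ collected in Section 2.4). Your outline follows the same route as the classical argument the paper attributes to Oleinik: Von Mises variables, an approximation of the degenerate equation $\partial_x w=\sqrt{w}\,\partial_\psi^2 w$, $L^\infty$ and barrier bounds via the maximum principle, a positive lower bound for $w/\psi$ near $\psi=0$, and inversion of the transformation. As a reconstruction of the cited proof it is headed in the right direction (Oleinik works with a different approximation scheme than your regularization $\sqrt{w+\e}$ on a truncated strip, but that is a matter of implementation, not of strategy).

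Two steps are substantially harder than your sketch suggests. First, the claim that bounds for $\partial_\psi w$ and $\partial_\psi^2 w$ ``follow by differentiating the equation and applying the maximum principle'' hides the main technical difficulty: the differentiated equations carry coefficients such as $\partial_\psi^2 w/(2\sqrt{w})$, which are singular at the degenerate boundary $\psi=0$, and the estimates that actually survive up to the corner are weaker and more delicate --- this is precisely why only a bound of the form $|\partial_x w|\le M\psi^{1-\beta}$, $\beta\in(0,\tfrac12)$, near $\psi=0$ is available (see \eqref{owp}); boundedness and continuity of $u_{yy}$ up to $y=0$ (item 3 of the theorem) rests on this finer analysis, and it is there, not merely in your sub-barrier $A\psi-B\psi^3$, that the compatibility assumption $u_0''(y)=O(y^2)$ really enters (at the corner the equation forces $u_{yy}(x,0)=0$, equivalently $\partial_x w(x,0)=0$). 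Second, uniqueness is not a one-line maximum principle for $w_1-w_2$: the difference satisfies an equation whose zeroth-order coefficient $\partial_\psi^2 w_2/(\sqrt{w_1}+\sqrt{w_2})$ is unbounded near $\psi=0$ and has no definite sign for general data, so one needs weighted comparison functions or additional structure (compare Lemma \ref{lem:com} of the paper, whose argument succeeds only because $\partial_\psi^2\bar w<0$ for the Blasius profile). With these two points fleshed out, your plan is a faithful outline of the proof in \cite{Olei}.
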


In fact, Theorem 2.1.1 in \cite{Olei} showed that in the case of favorable pressure gradient $p'(x)\leq 0$, the global-in-$x$ solutions exist; while in the case of adverse pressure gradient $p'(x)> 0,$ only local-in-$x$ solutions exist.

Recently, in the case of favorable pressure gradient, Guo and Iyer \cite{Guo2} proved the higher regularity of the solution through the energy method, and the authors proved the global-in-$x$ $C^\infty$ regularity up to the boundary $y=0$ using the maximum principle technique \cite{WZ19}. In the case of adverse pressure gradient, Dalibard and  Masmoudi \cite{DM} as well as Shen and the authors \cite{SWZ} justified the physical phenomenon of boundary layer separation.
Let us mention some related works \cite{XZ, EE, KVW} for the unsteady Prandtl equation.

In this paper, we are concerned with the asymptotic behavior of Oleinik's solution when the outer flow $U(x)=1$.
In this case, \eqref{eq:SP} admits a family of self-similar Blasius solutions:
\begin{align}\label{zetadef}
 [\bar{u},\bar{v}]=\Big[f'(\zeta),\frac{1}{2\sqrt{x+x_0}}\{\zeta f'(\zeta)-f(\zeta)\}\Big],
\end{align}
where $\zeta=\frac{ y}{\sqrt{x+x_0}}$ with $x_0>0$ as a free parameter. See section 2.1 for the properties of $f(\zeta)$.
For simplicity, we always take $x_0=1.$ \smallskip

Serrin \cite{Serrin}  proved the following asymptotic behavior of the Oeinik's solution.

\begin{theorem}\label{Sethm}(Serrin) Let $u$ be a global Oleinik's solution to \eqref{eq:SP} with $U(x)=1$.
Then the asymptotic behavior holds
$$\|u(x,y)-\bar{u}(x,y)\|_{L^\infty_y}\rightarrow 0\quad \text{as}\quad x\rightarrow+\infty.$$
\end{theorem}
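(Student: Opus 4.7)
The plan is to prove the theorem via a scaling-and-compactness argument, identifying the asymptotic profile as the unique self-similar (Blasius) solution of the Prandtl equation with $U\equiv 1$. The main engine is the invariance of the system under $(x,y,u,v)\mapsto(\lambda x,\sqrt\lambda\,y,u,\sqrt\lambda\,v)$: writing $u^\lambda(x,y):=u(\lambda x,\sqrt\lambda\,y)$ and $v^\lambda(x,y):=\sqrt\lambda\,v(\lambda x,\sqrt\lambda\,y)$, the pair $(u^\lambda,v^\lambda)$ again solves \eqref{eq:SP} with the same boundary conditions and initial data $u_0(\sqrt\lambda\,y)$. Under the same rescaling the Blasius profile with parameter $x_0$ becomes $\bar u^\lambda(x,y)=f'\!\bigl(\sqrt\lambda\,y/\sqrt{\lambda x+x_0}\bigr)$, which converges locally uniformly as $\lambda\to\infty$ to the ``centered'' Blasius $f'(y/\sqrt x)$. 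Hence showing $\|u(\lambda,\cdot)-\bar u(\lambda,\cdot)\|_{L^\infty_y}\to 0$ reduces, modulo the vanishing $x_0$-shift discrepancy, to showing $u^\lambda(1,\cdot)\to f'(\cdot)$ in $L^\infty_y$.

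To identify the limit, I would invoke Oleinik's a priori bounds from Theorem~\ref{thm:olei} on $u,u_y,u_{yy}$, which are scale-invariant and hence give $\lambda$-uniform bounds on $u^\lambda,u^\lambda_y,u^\lambda_{yy}$ on compact subsets of $(0,\infty)_x\times[0,\infty)_y$. By Arzelà--Ascoli, any sequence $\lambda_n\to\infty$ contains a subsequence with $u^{\lambda_n}\to u^\infty$ locally uniformly, and $u^\infty$ still solves the Prandtl system with $U\equiv 1$. The crux is then self-similarity: since $(u^\lambda)^\mu=u^{\lambda\mu}$ for every $\mu>0$, the limit $u^\infty$ is invariant under all rescalings, forcing the ansatz $u^\infty(x,y)=U_\infty(y/\sqrt x)$. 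Substituting into the Prandtl system reduces it to the Blasius ODE $2f'''+ff''=0$ with $f(0)=f'(0)=0$, $f'(\infty)=1$, whose solution is unique, so $U_\infty=f'$. Every subsequential limit therefore agrees, and the full family $u^\lambda$ converges locally uniformly to $f'(y/\sqrt x)$.

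The final step is to upgrade local uniform convergence in $y$ to $L^\infty_y$ convergence, which requires uniform far-field control: $1-u(x,y)$ small for $y$ beyond a threshold independent of $x$ (equivalently, of $\lambda$). Here I would construct barrier functions — members of the Blasius family itself, together with Gaussian-type supersolutions of the form $1-Ce^{-cy^2/(x+x_0)}$ — as super- and subsolutions in the maximum principle, forcing exponential tail decay of $1-u$ uniform in $x$. Combined with the classical exponential decay of $1-f'(\zeta)$ inherited from the Blasius ODE, this yields $\|u(\lambda,\cdot)-f'(\cdot/\sqrt\lambda)\|_{L^\infty_y}\to 0$, and unwinding the rescaling gives the theorem. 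The principal obstacle is precisely this uniform-in-$x$ far-field estimate: the compactness argument alone delivers only local convergence, while the theorem demands $L^\infty_y$ on the whole half-line. Constructing the requisite barriers, and verifying that they remain effective under the rescaling, is the principal technical content of the argument and foreshadows the barrier-function technique the paper develops more thoroughly for its quantitative decay estimates.
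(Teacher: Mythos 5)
The paper does not prove this theorem; it is a citation of Serrin's 1967 result, and Serrin's own argument is a maximum-principle comparison with Blasius-type barriers (the ``ridge'' construction the paper recalls in Remark 3.5), not a compactness argument. Your blow-down strategy is a genuinely different route, and it would be interesting if it worked cleanly, but as sketched it has a real gap at the compactness step. You assert that Oleinik's a priori bounds on $u_y,u_{yy}$ are ``scale-invariant and hence give $\lambda$-uniform bounds.'' They are not: $u^\lambda_y(x,y)=\sqrt\lambda\,u_y(\lambda x,\sqrt\lambda\,y)$ and $u^\lambda_{yy}=\lambda\,u_{yy}(\lambda x,\sqrt\lambda\,y)$, so $\lambda$-uniform bounds on a strip $x\in[a,b]$ require decay estimates of the form $|u_y(x,y)|\lesssim x^{-1/2}$ and $|u_{yy}(x,y)|\lesssim x^{-1}$. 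Theorem~\ref{thm:olei} only asserts boundedness on $[0,X]\times\R_+$ (with constants a priori depending on $X$), and the decay estimates you need are essentially the content of Theorem~\ref{thm:decay1} — which the paper proves \emph{after} Serrin's theorem, and only under the extra concavity hypothesis \eqref{decay2inf}. Invoking them here would be circular, and obtaining them for general Oleinik data by interior parabolic smoothing of $\partial_x w=\sqrt{w}\,\partial_\psi^2 w$ is delicate near the degenerate boundary $\psi=0$ and is not a triviality you can defer.

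Two further points deserve caution. First, the self-similarity claim $(u^\infty)^\mu=u^\infty$ requires more than subsequential convergence: the relation $(u^\lambda)^\mu=u^{\lambda\mu}$ only forces scale invariance of the limit if subsequential limits are unique, and a uniqueness theorem for the Prandtl system with the singular initial trace $u_0(0^+)=1$, $u(0,0)=0$ is itself nontrivial (the shifted Blasius profiles $f'(y/\sqrt{x+a})$, $a>0$, are a one-parameter family of nearby solutions one must exclude). Second, you correctly identify the far-field control as the main obstacle to upgrading local to global-in-$y$ uniform convergence; but note that the Gaussian tail barrier you propose is exactly what Lemma~\ref{lem:exp} of the paper builds, and that lemma uses the additional hypothesis \eqref{dk0} on $u_0$, which Serrin's theorem does not assume. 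For general Oleinik data with only $u_0(y)\to 1$ (no rate), the Gaussian barrier does not close at $x=0$, and a different device — e.g.\ comparison with sub/supersolutions in the Blasius family itself, which is what Serrin actually does — is needed. So the proposal outlines a plausible alternative strategy, but the key uniform estimates you treat as available are precisely the hard part, and filling them in would require either a version of the paper's later quantitative lemmas (under weaker hypotheses than they are proved) or a reversion to Serrin's original comparison method.
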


Recently, Iyer \cite{Iyer} proved the explicit decay estimates of modulated substraction $\phi$ and its derivatives in the Von Mises coordinates when the initial data is a small localized perturbation of the Blasius profile by using the energy method, where
\begin{align}\label{modsubstra}\begin{split}
    \phi(x,\psi)&=w(x,\psi)-\bar{w}(x,\psi),\quad \bar w=\bar u^2.
    \end{split}
\end{align}
These estimates play a crucial role in validating the Prandtl's boundary layer theory \cite{Guo1, IM}.\smallskip

The aim of this paper is to study the asymptotic behavior of the Oleinik's solution for general initial data.
The first main result is stated as follows.

\begin{theorem}\label{thm:decay0}
Let $u$ be a global Oleinik's solution to \eqref{eq:SP} with $U(x)=1$.  Under the additional decay assumption
 \begin{align}\label{dk0}
 |u_0(y)-1|\leq C_4e^{-y^2C_5}
\end{align}
for some positive constants $C_4,C_5$ with $C_5>C_1$ where $C_1$ is the constant in \eqref{blainf},  there exist positive constants $C $ and $c$ so that  for any $(x,y)\in \R_+\times \R_+$,
$$|u(x,y)-\bar{u}(x,y)|\leq \frac{C}{\sqrt{x+1}}\ln (x+e)e^{-c\frac{y^2}{x+1} }.$$
\end{theorem}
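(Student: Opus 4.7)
I would work in the Von Mises coordinates $(x,\psi)$, set $\phi=w-\bar w$, and dominate $|\phi|$ by a barrier via the maximum principle. Subtracting \eqref{starmain} for $\bar w$ from the analogous equation for $w$ and using $\sqrt w-\sqrt{\bar w}=\phi/(\sqrt w+\sqrt{\bar w})$ produces the linear parabolic equation
\begin{equation*}
\partial_x\phi-\sqrt w\,\partial_\psi^2\phi=V(x,\psi)\,\phi,\qquad V:=\frac{\partial_\psi^2\bar w}{\sqrt w+\sqrt{\bar w}},
\end{equation*}
subject to $\phi(x,0)=0$ and $\phi(x,\psi)\to 0$ as $\psi\to\infty$. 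Using $\psi=\sqrt{x+1}\,f(\zeta)$ and the Blasius ODE $f'''+\tfrac12 ff''=0$, a short computation gives $V=-\frac{ff''}{(x+1)f'(u+\bar u)}\le 0$, so the coefficient is sign-stabilizing but of borderline size $\sim 1/(x+1)$ in the core layer --- precisely the scale of the Gaussian self-similar envelope.

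For the initial data, the assumption \eqref{dk0} together with the Blasius Gaussian decay from \eqref{blainf} and $C_5>C_1$ yields $|u_0(y)-\bar u(0,y)|\lesssim e^{-C_1 y^2}$; converting via $\psi(0,y)\sim y$ at infinity gives the initial bound $|\phi(0,\psi)|\lesssim e^{-c\psi^2}$ for some $c>0$.

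The barrier I would try has the form
\begin{equation*}
B(x,\psi)=\frac{A\ln(x+e)}{\sqrt{x+1}}\,G(\chi),\qquad \chi=\frac{\psi}{\sqrt{x+1}},\qquad G(\chi)=e^{-c\chi^2},
\end{equation*}
and the central task is to verify the supersolution inequality $\partial_x B-\sqrt w\,\partial_\psi^2 B-VB\ge 0$. A direct computation yields the clean factorisation
\begin{equation*}
\partial_x B-\sqrt w\,\partial_\psi^2 B=\frac{B}{(x+e)\ln(x+e)}-\frac{B}{2(x+1)}(1-2c\chi^2)(1-4c\sqrt w),
\end{equation*}
so taking $c$ small (say $c<1/4$), the task reduces to showing that $|V|B$ plus the positive logarithmic contribution $B/((x+e)\ln(x+e))$ dominates the indefinite middle term. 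Without the log prefactor, the pure self-similar barrier $\frac{1}{\sqrt{x+1}}G(\chi)$ fails to be a supersolution precisely in the core region $\chi=O(1)$ where $|V|$ is critical of size $1/(x+1)$; inserting $\ln(x+e)$ produces exactly the extra slack $B/((x+e)\ln(x+e))$ required to close the inequality, which is the origin of the logarithmic loss in the stated rate.

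The main obstacle will be the construction of $G$ (or of a ``series of barrier functions'') so that the inequality above holds uniformly in $\chi\in[0,\infty)$: the factor $(1-2c\chi^2)(1-4c\sqrt w)$ changes sign both in $\chi$ and between the inner layer ($\sqrt w\ll 1$) and the outer region ($\sqrt w\approx 1$), and $|V|$ has genuinely different orders near the wall and at infinity, so a single Gaussian profile is unlikely to serve as a global supersolution and several profiles adapted to distinct regions may have to be glued together via the maximum principle. Once such a $B$ is found and the parabolic-boundary comparison $|\phi|\le B$ on $\{x=0\}\cup\{\psi=0\}$ is verified (which reduces to the initial estimate since $\phi(x,0)=0\le B(x,0)$), the maximum principle gives $|\phi|\le B$ on $\R_+\times\R_+$; translating back to $(x,y)$ via $|u-\bar u|\le|\phi|/\bar u$ and the Von Mises change of variables produces the Gaussian factor $e^{-cy^2/(x+1)}$ with the asserted overall rate $\ln(x+e)/\sqrt{x+1}$.
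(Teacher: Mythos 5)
Your framework (Von Mises variables, the damped equation $\partial_x\phi-\sqrt w\,\partial_\psi^2\phi+A\phi=0$ with $A=-V\ge 0$, and a maximum-principle comparison with a barrier) is the same as the paper's, and your factorisation identity for $\partial_xB-\sqrt w\,\partial_\psi^2B$ is algebraically correct. But the two load-bearing steps of your plan do not work as described. First, the logarithmic prefactor cannot close the supersolution inequality in the core. Near the wall ($\zeta\to 0$, $\sqrt w$ small, $\chi$ small) the deficit of the pure self-similar Gaussian is of size $\tfrac{B}{2(x+1)}$, coming from $\partial_x(x+1)^{-1/2}$, whereas the slack you gain from $\ln(x+e)$ is only $\tfrac{B}{(x+e)\ln(x+e)}$, which is $o\big(\tfrac{B}{x+1}\big)$; and the damping there is only of size $A\sim\tfrac{1}{4(x+1)}$ up to constants that are further degraded by the comparison $c\bar w\le w\le C\bar w$, so you cannot guarantee $|V|+\tfrac{1}{(x+e)\ln(x+e)}\ge\tfrac{1}{2(x+1)}$. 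The paper's Von Mises estimate in fact carries \emph{no} logarithm ($|\phi|\le C(x+1)^{-1/2}e^{-c\psi^2/(x+1)}$, Proposition \ref{51} together with \eqref{it1phi}); the $O\big(\tfrac{1}{x+1}\big)$ deficit is avoided by taking as outer barrier the Blasius shear $\partial_\psi\bar w$ itself, which solves the linearized equation exactly up to error terms that are small thanks to two preliminary bounds (Gaussian decay in $\psi$, Lemma \ref{lem:exp}, and algebraic decay $(x+1)^{-\lambda}$, Lemma \ref{lem:alg}) and are absorbed by the extra factor $e^{-B(x+1)^{-\lambda/2}}$, while the inner barrier is $\sim(x+1)^{-\frac12-\frac{1-\alpha}{2}}\psi^{1-\alpha}$, where diffusion dominates because $\sqrt w\gtrsim\psi^{1/2}(x+1)^{-1/4}$; the two pieces are glued at a ridge $h=1/N$. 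Your own caveat that ``several profiles adapted to distinct regions may have to be glued'' is exactly the part that constitutes the proof, and it is missing.

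Second, your final step ``translate back via $|u-\bar u|\le|\phi|/\bar u$'' misses where the logarithm actually comes from. The identity $|\sqrt w-\sqrt{\bar w}|=|\phi|/(\sqrt w+\sqrt{\bar w})$ at a fixed $\psi$ compares $u(x,y)$ with $\bar u(x,\bar y)$, where $\bar y=\int_0^{\psi}\bar w^{-1/2}d\psi'$ is in general different from $y=\int_0^\psi w^{-1/2}d\psi'$. The theorem compares $u$ and $\bar u$ at the \emph{same} $y$, so one must add the term $|\bar u(x,\bar y)-\bar u(x,y)|\le|\partial_y\bar u|\,|\bar y-y|$ and prove $|y-\bar y|\le C+C\ln(x+1)+Cy/\sqrt{x+1}$; the $\ln$ arises from the near-wall integral $\int_0^{\psi_1}\big|w^{-1/2}-\bar w^{-1/2}\big|\,d\psi\lesssim\ln\sqrt{x+1}$ (paper's Lemma \ref{cor1} and estimate \eqref{y-ybar}), and only after multiplying by the Gaussian-decaying $\partial_y\bar u\sim(x+1)^{-1/2}f''$ does one get the stated rate $\ln(x+e)/\sqrt{x+1}$. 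So the logarithm is misattributed in your plan (it does not, and should not, appear in the Von Mises barrier), and the conversion to Euler coordinates as you wrote it is incomplete.
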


\begin{remark}
The decay rate should be optimal in the sense explained on Page 6 in \cite{IM}.
\end{remark}

For the decay estimates of high order derivatives of $u$,  we additionally require the initial data to be  concave.

 \begin{theorem}\label{thm:decay1}
 Let $u$ be a global Oleinik's solution to \eqref{eq:SP} with $U(x)=1$.  Under the assumptions \eqref{dk0}
 and
 \begin{align}\label{decay2inf}
 C_6e^{-y^2C_7}\leq \partial_y^2u_0(y)\leq 0
\end{align}
for some positive constants $C_6, C_7$ with $C_7>C_1$ where $C_1$ is the constant in \eqref{blainf},
there exist positive constants $C,c$ and $N$ so that for any $(x,y)\in (N,+\infty)\times \R_+$,
\begin{align*}
 & -\frac{C}{x+1}e^{-c\frac{y^2}{x+1}}\leq \partial_y^2u(x,y)\leq 0,\\
  & |\partial_y (u(x,y)-\bar{u}(x,y))|\leq \frac{C}{(x+1)^{\frac{3}{4}}}\ln(x+e)e^{-c\frac{y^2}{x+1} },\\
  & |\partial_xu(x,y)|\leq \frac{C}{x+1}e^{-c\frac{y^2}{x+1}},\quad  |\partial_{xy} u(x,y)|\leq \frac{C}{(x+1)^{\frac{3}{4}}}e^{-c\frac{y^2}{x+1}}.
  \end{align*}
\end{theorem}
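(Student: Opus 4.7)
The strategy follows the paper's maximum-principle-plus-barrier philosophy. I would work throughout in the von Mises coordinates, setting $w=u^2$, $\bar w=\bar u^2$, $\phi=w-\bar w$, and exploit the algebraic identities $\partial_y u=\tfrac12\partial_\psi w$ and $\partial_y^2 u=\tfrac12\sqrt w\,\partial_\psi^2 w$ from \eqref{PvM}; a further consequence of \eqref{starmain} is that $\partial_x u=\tfrac12\partial_\psi^2 w$ at points where $u>0$. Thus the sign and Gaussian decay of $W:=\partial_\psi^2 w$ simultaneously control $\partial_y^2 u$ and $\partial_x u$. Note also that Gaussian decay in $\psi/\sqrt{x+1}$ translates, up to harmless shifts coming from the displacement thickness, to Gaussian decay in $y/\sqrt{x+1}$ at the cost of a smaller constant $c$.

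\smallskip

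First I would establish the one-sided bound $W\leq 0$. Differentiating \eqref{starmain} twice in $\psi$ produces a parabolic equation of the form $\partial_x W=\sqrt w\,\partial_\psi^2 W+R$ in which the remainder $R$ is built from $\partial_\psi w$, $W$, and inverse powers of $\sqrt w$. Hypothesis \eqref{decay2inf} gives $W_0\leq 0$, the far-field condition $W\to 0$ follows from $w\to 1$, and the weak maximum principle (together with the sign structure of $R$) yields $W\leq 0$ globally. For the matching lower bound I try the barrier $\underline W(x,\psi)=-\frac{A}{x+1}e^{-c\psi^2/(x+1)}$. A direct parabolic computation shows that for $c$ sufficiently small and $A$ sufficiently large, $\underline W$ is a sub-solution of the $W$-equation; the hypothesis $C_7>C_1$ ensures the initial data dominates $\underline W(0,\cdot)$ once compared with the Gaussian tail of the Blasius profile encoded in \eqref{blainf}. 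The comparison principle then gives $\underline W\leq W\leq 0$, which is simultaneously the claimed two-sided bound on $\partial_y^2 u$ and, via $\partial_x u=\tfrac12 W$, the claimed bound on $\partial_x u$.

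\smallskip

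For the two remaining estimates I would leverage these bounds together with Theorem \ref{thm:decay0}. The bound on $\partial_y(u-\bar u)$ follows from the one-dimensional Gagliardo--Nirenberg inequality $\|\partial_y g\|_{L^\infty_y}^2\leq C\|g\|_{L^\infty_y}\|\partial_y^2 g\|_{L^\infty_y}$ with $g=u-\bar u$, carrying the Gaussian weight through the change of variables $\zeta=y/\sqrt{x+1}$; this yields the advertised $(x+1)^{-3/4}\ln(x+e)e^{-cy^2/(x+1)}$ rate. The bound on $\partial_{xy} u$ is obtained similarly, either by applying the same interpolation to $\partial_x u$, or by noting that $\partial_{xy}u=\tfrac{u}{2}\partial_\psi W$ and constructing a separate barrier for $\partial_\psi W$ obtained by differentiating the $W$-equation once more in $\psi$; since $0\leq u\leq 1$, any rate at least as good as $(x+1)^{-3/4}$ for $\partial_\psi W$ suffices.

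\smallskip

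The main obstacle is the lower-bound step for $W$. The remainder $R$ in the $W$-equation contains terms involving $1/\sqrt w$ and $\partial_\psi w/\sqrt w$ that degenerate as $\psi\to 0$, so verifying that the Gaussian ansatz $\underline W$ is genuinely a sub-solution near the boundary requires a delicate inner estimate---most naturally via an auxiliary Hopf-type lower bound $\partial_\psi w\gtrsim (x+1)^{-1/2}$ on a strip $\{0\leq\psi\leq 1\}$, itself proved by another barrier. A secondary subtlety is recovering the precise $\ln(x+e)$ factor rather than $(\ln(x+e))^{1/2}$ from the naive interpolation, which may force one to replace that step by a direct barrier for $\partial_\psi\phi$ built from the linearised equation $\partial_x\phi=\sqrt w\,\partial_\psi^2\phi+\frac{\partial_\psi^2\bar w}{\sqrt w+\sqrt{\bar w}}\phi$ coming from \eqref{modsubstra}, where the concavity $\partial_\psi^2\bar w\leq 0$ provides a beneficial sign for the zeroth-order coefficient.
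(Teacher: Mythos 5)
Your plan founders on a coordinate confusion that invalidates the third and fourth estimates. The identity you rely on, $\partial_x u=\tfrac12\partial_\psi^2 w$, is false: from \eqref{PvM}--\eqref{starmain} one has $\sqrt{w}\,\partial_\psi^2 w=\partial_{\tilde x}w=2\partial_y^2u$, where $\partial_{\tilde x}$ is the von Mises derivative, so $\tfrac12\partial_\psi^2 w=\partial_y^2u/u=\partial_x u+\tfrac{v}{u}\partial_y u$, not $\partial_x u$. The Eulerian derivative picks up the correction $\partial_x\psi_2\,\partial_\psi w$ (equivalently $\tfrac{v}{u}\partial_y u$), which is of exactly the same size $\sim(x+1)^{-1}e^{-cy^2/(x+1)}$ as the target bound and cannot be dropped; the paper's Propositions 6.2--6.3 exist precisely to control it, using the estimate of $\partial_x\psi_2$, the decay $|\partial_\psi\phi|\lesssim(x+1)^{-3/4}e^{-c\psi^2/(x+1)}$ from Theorem \ref{thm:decay0-V}, and for $\partial_{xy}u$ the bound on $\partial_{\psi x}w$ from Theorem \ref{thm:w-decay}, which in turn requires the whole Section 5 machinery (Schauder estimates at a fixed time plus the $\partial_x^2w$ barriers with ridges). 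Your alternative $\partial_{xy}u=\tfrac{u}{2}\partial_\psi W$ fails for the same reason, and your interpolation route for $\partial_{xy}u$ would need a bound on $\partial_y^2\partial_x u$ that you do not have.

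The second-derivative bounds are also not closed as written. For $W\le 0$ the natural object is $g=\partial_xw=\sqrt w\,\partial_\psi^2w=2\partial_y^2u$, whose equation $\partial_x g-\tfrac{g^2}{2w}-\sqrt w\,\partial_\psi^2 g=0$ contains a quadratic term of the \emph{unfavorable} sign that is singular at $\psi=0$; a weak maximum principle "together with the sign structure of $R$" does not apply, and the paper instead runs an energy/Gronwall argument with a continuity-in-$x$ threshold, using $\partial_xw(x,0)=0$, $|\partial_xw|\lesssim\psi^{1-\beta}$ and $|\partial_y^3u|\le C$ to justify the integrations by parts, with the singular damping $\tfrac{(\partial_\psi w)^2}{w^{3/2}}$ absorbing the $1/w$ factor near $\psi=0$. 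For the lower bound you yourself identify the degeneracy at $\psi=0$ as the main obstacle but leave it unresolved; a Gaussian barrier $-\tfrac{A}{x+1}e^{-c\psi^2/(x+1)}$ does not vanish at $\psi=0$, whereas the quantity being bounded behaves like $-\tfrac{C}{x+1}ff''\sim\psi$ there. The paper's resolution is structurally different: it bounds $\partial_x\phi$ by the barrier $(-\partial_x\bar w)\,e^{-K(x+1)^{-\epsilon}}$, which vanishes at the right rate at $\psi=0$ and satisfies a tractable equation, exploiting the damping $A>0$, the good-sign term $-\tfrac{w_x}{2w}\ge0$ (available only after concavity is proved), the refined smallness of $\phi$ near $\psi=0$ (Lemma 4.2), and the hypothesis $C_7>C_1$ to compare the initial data of $\partial_x\phi$ with $-\partial_x\bar w$ at infinity. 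Only the $\partial_y(u-\bar u)$ step of your proposal is essentially sound: your Gaussian-weighted interpolation is the paper's finite-difference argument at scale $(x+1)^{1/4}$, and the $\ln$ versus $(\ln)^{1/2}$ worry is moot since the weaker logarithmic factor is the one claimed.
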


\begin{remark}
These decay estimates mean that $u$ has similar behaviors  with the Blasius solution in the large time. It remains unknown
whether the concave condition could be removed.
\end{remark}

The proof of Theorem \ref{thm:decay0} and Theorem \ref{thm:decay1} is based on decay estimates for $\phi$ and $w$ under the
Von Mises coordinates.

\begin{theorem}\label{thm:decay0-V}
Under the assumptions in Theorem \ref{thm:decay1}, there exist positive constants $C$ and $c$ so that  for any $(x,\psi)\in \R_+\times \R_+$,
\begin{align*}
&|\phi(x,\psi)|\leq \frac{C }{\sqrt{x+1}}e^{-c\frac{\psi^2}{x+1}},\quad
 |\partial_\psi^2 \phi(x,\psi)|\leq\frac{C}{x+1}e^{-c\frac{\psi^2}{x+1}},\\
&|\partial_{x}\phi(x,\psi)|\leq \frac{C }{x+1}e^{-c\frac{\psi^2}{x+1}},\quad
|\partial_\psi \phi(x,\psi) |\leq \frac{C}{(x+1)^{\frac{3}{4}}}e^{-c\frac{\psi^2}{x+1} }.
\end{align*}
\end{theorem}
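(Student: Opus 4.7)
The starting point is to derive the parabolic equation satisfied by $\phi = w - \bar w$. Subtracting $\partial_x \bar w = \sqrt{\bar w}\,\partial_\psi^2 \bar w$ from $\partial_x w = \sqrt{w}\,\partial_\psi^2 w$ and using $\sqrt{w}-\sqrt{\bar w} = \phi/(\sqrt{w}+\sqrt{\bar w})$ gives
\[
\partial_x \phi - \sqrt{w}\,\partial_\psi^2 \phi \;=\; \frac{\partial_\psi^2 \bar w}{\sqrt{w}+\sqrt{\bar w}}\,\phi.
\]
Since the Blasius profile is concave ($f''\le 0$) and $2\partial_y^2 \bar u = \sqrt{\bar w}\,\partial_\psi^2 \bar w$, we have $\partial_\psi^2 \bar w\le 0$, so the zeroth-order coefficient on the right has the favorable sign for a parabolic maximum principle. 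The parabolic boundary data are also well behaved: $\phi(x,0) = 0 = \lim_{\psi\to\infty}\phi(x,\psi)$, and the initial trace $\phi(0,\psi)$ inherits Gaussian decay from \eqref{dk0} (since $\psi(0,y) = \int_0^y u_0(y')\,dy' \asymp y$ for large $y$).

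The main work is to produce, for each of the four quantities, a barrier of Gaussian form $A(x+1)^{-\alpha}\exp(-c\psi^2/(x+1))$ with the correct exponent $\alpha$ and to verify the comparison by the maximum principle. Starting with $\phi$, the ansatz $B_0 = A(x+1)^{-1/2}e^{-c\psi^2/(x+1)}$ is dictated by parabolic scaling, since the self-similar Gaussian $(x+1)^{-1/2}e^{-\psi^2/(4(x+1))}$ is an exact solution of the linearized operator $\partial_x - \partial_\psi^2$. A direct computation of $\partial_x B_0 - \sqrt{w}\,\partial_\psi^2 B_0$ shows that with $c$ slightly below $1/4$, $B_0$ is a strict supersolution wherever $\sqrt{w}$ is close to $1$, but fails in a neighborhood of the wall where $\sqrt{w}\to 0$. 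Repairing this defect by adding a lower-order correction (and exploiting $\phi(x,0) = 0$) yields the first bound $|\phi|\le C(x+1)^{-1/2}e^{-c\psi^2/(x+1)}$ by comparison.

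For $\partial_\psi^2 \phi$ under the added hypothesis \eqref{decay2inf}, I would bound $\partial_\psi^2 w$ and $\partial_\psi^2 \bar w$ separately and subtract. The upper bound $\partial_\psi^2 w\le 0$ is propagated from the initial concavity by applying the maximum principle to the equation for $\partial_\psi^2 w$ obtained by differentiating $\partial_x w = \sqrt{w}\,\partial_\psi^2 w$ twice in $\psi$; the Gaussian lower bound $-C(x+1)^{-1}e^{-c\psi^2/(x+1)} \le \partial_\psi^2 w$ comes from comparison with a barrier $-A(x+1)^{-1}e^{-c\psi^2/(x+1)}$, the improved rate $(x+1)^{-1}$ reflecting the parabolic scaling of the additional derivative. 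The matching bounds on $\partial_\psi^2 \bar w$ follow from the Blasius ODE $f''' + \tfrac12 f f'' = 0$ and classical Gaussian estimates for $f''$.

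The remaining two estimates then fall out. Rewriting
\[
\partial_x \phi \;=\; \sqrt{w}\,\partial_\psi^2 \phi \;+\; \frac{\partial_\psi^2 \bar w}{\sqrt{w}+\sqrt{\bar w}}\,\phi,
\]
each factor on the right is already controlled in Gaussian form at rate $(x+1)^{-1}$, delivering the bound on $\partial_x \phi$. Finally, $\partial_\psi \phi$ comes from the pointwise interpolation $\|\partial_\psi g\|_{L^\infty_\psi}^2 \le C\,\|g\|_{L^\infty_\psi}\,\|\partial_\psi^2 g\|_{L^\infty_\psi}$ applied slice-by-slice in $x$, together with a cutoff to preserve the Gaussian weight, giving the intermediate rate $(x+1)^{-3/4}$ from $\sqrt{(x+1)^{-1/2}\cdot(x+1)^{-1}}$. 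The main obstacle throughout is the \emph{construction of the barriers}: each must be a genuine supersolution of the degenerate operator $\partial_x - \sqrt{w}\,\partial_\psi^2$ \emph{all the way to the wall} $\psi = 0$, and the vanishing of $\sqrt{w}$ there forces one to augment the clean Gaussian ansatz with carefully tuned correction terms; once the right barriers are in hand, the comparison principle does the rest.
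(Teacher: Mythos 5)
Your high-level strategy — maximum principle with carefully chosen barriers, plus a one-sided mean-value interpolation to pass from $\phi$ and $\partial_\psi^2\phi$ to $\partial_\psi\phi$ — is the right one and matches the paper's spirit, and the interpolation step for $\partial_\psi\phi$ is essentially identical to what the paper does. However, you reorder the derivative estimates (proving $\partial_\psi^2\phi$ before $\partial_x\phi$, by bounding $\partial_\psi^2 w$ and $\partial_\psi^2\bar w$ \emph{separately}), whereas the paper proves $\partial_x\phi$ first (Proposition 4.8, with a barrier proportional to $-\partial_x\bar w$) and then reads $\partial_\psi^2\phi=\tfrac{1}{\sqrt w}(\partial_x\phi+A\phi)$ off the equation. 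The paper's order is not cosmetic: the barrier for $\partial_x\phi$ must vanish like $h=\psi/\sqrt{x+1}$ at the wall, precisely because $\partial_\psi^2\phi$ is later obtained by dividing by $\sqrt w\sim h^{1/2}$, and choosing the barrier $\propto-\partial_x\bar w=\tfrac{1}{x+1}ff''\sim\tfrac{h}{x+1}$ is what makes the quotient bounded. Your alternative would instead require a direct barrier for $\partial_\psi^2 w$ via its own PDE (which has a singular, though favorably signed, zeroth-order coefficient $\partial_\psi^2\sqrt w$), and also your claim that $\partial_\psi^2 w\le 0$ follows by ``the maximum principle for the equation of $\partial_\psi^2 w$'' glosses over the boundary degeneracy: the paper's Lemma 4.1 resorts to an $L^2$ energy argument exactly because the pointwise principle is delicate there.

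The more serious gap is that the barriers you write down do not actually work, and ``adding a lower-order correction'' is the entire content of the proof, not a footnote. A pure Gaussian $A(x+1)^{-\alpha}e^{-c\psi^2/(x+1)}$ fails to be a supersolution of $\partial_x-\sqrt w\,\partial_\psi^2+A$ not only at the wall (as you note) but also in the \emph{intermediate} region $1\lesssim\psi^2/(x+1)\lesssim 1/c$: there $\sqrt w\sim 1$, the Gaussian curvature term $\sim c\psi^2/(x+1)^2$ is still smaller than the decaying $-\alpha/(x+1)$ coming from $\partial_x$, and one needs the damping $A\sim\lambda_0/(x+1)$ to save the inequality — but that requires $\lambda_0>\alpha$, which only pins down the decay rate up to $\lambda_0$, not the optimal $\tfrac12$. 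The paper resolves this by using piecewise barriers with ridges: a power $\psi^{1-\alpha}$ near the wall (so the degenerate diffusion $\sqrt w\sim h^{1/2}\cdot\psi^{-2}$ dominates), a ``flat cap'' in the middle (where $A$ alone saves the day, after first establishing a preliminary algebraic rate via Lemma 3.7), and then $\partial_\psi\bar w$ in the outer region (which nearly solves the linearized equation, and whose Gaussian tail from $f''$ is what one finally reads off). Similarly, $\partial_x\phi$ uses the barrier $-\partial_x\bar w$ with a multiplicative modulation $e^{-K(x+1)^{-\epsilon}}$, and these choices exploit the Blasius structure $-\partial_x\bar w=\tfrac{1}{x+1}ff''$ and $\partial_\psi\bar w=\tfrac{2}{\sqrt{x+1}}f''$ in an essential way. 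Without spelling out these barriers and verifying the ridge/supersolution conditions region by region, the argument is not complete.
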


\begin{theorem}\label{thm:w-decay}
Under the assumptions in Theorem \ref{thm:decay1}, there exist positive constants $C, c$ and $N$ so that for any $(x,\psi)\in (N,+\infty)\times \R_+$,
\begin{align*}
&|\partial_{\psi x} w(x,\psi)| \leq \frac{C}{(x+1)^{\frac{3}{4}}}e^{-c\frac{\psi^2}{x+1} },\\
& |\partial_x^2 w(x,\psi)|+|\partial_x\partial_\psi^2 w(x,\psi)|\leq \frac{C}{(x+1)^{\frac{1}{2}}}e^{-c\frac{\psi^2}{x+1} }.
\end{align*}
\end{theorem}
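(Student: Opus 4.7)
The plan is to upgrade the $L^\infty$--type decay estimates of Theorem~\ref{thm:decay0-V} to bounds on the mixed derivatives of $w$, exploiting the Von Mises equation $\partial_x w=\sqrt{w}\,\partial_\psi^2 w$ as a purely algebraic tool. Differentiating this identity yields
\begin{align*}
\partial_{\psi x}w &=\frac{\partial_\psi w}{2\sqrt w}\,\partial_\psi^2 w+\sqrt w\,\partial_\psi^3 w,\\
\partial_{x}\partial_\psi^2 w &=\partial_\psi^2\!\bigl(\sqrt w\,\partial_\psi^2 w\bigr),\\
\partial_x^2 w &=\frac{\sqrt w\,(\partial_\psi^2 w)^2}{2}+\sqrt w\,\partial_x\partial_\psi^2 w.
\end{align*}
So the task reduces to controlling $\partial_\psi^3 w$ and $\partial_\psi^4 w$, after which the three claimed estimates follow by direct substitution, since $\sqrt w$ and $\partial_\psi w$ are bounded (and $\sqrt w\to 0$ at the rate $\psi/\sqrt{x+1}$ near the boundary, compensating the apparent singularity $1/\sqrt w$ in the first identity).

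\textbf{Higher $\psi$-derivative bounds.} Decompose $w=\bar w+\phi$. The Blasius part $\bar w(x,\psi)=F\!\bigl(\psi/\sqrt{x+1}\bigr)$ obeys $|\partial_\psi^k\bar w|\leq C(x+1)^{-k/2}e^{-c\psi^2/(x+1)}$ for $k\leq 4$ by differentiating the self-similar profile and using the Blasius decay recalled in Section~2.1. For the perturbation $\phi$, differentiate its evolution equation
$$\partial_x\phi=\sqrt{\bar w+\phi}\,\partial_\psi^2\phi+\bigl(\sqrt{\bar w+\phi}-\sqrt{\bar w}\bigr)\partial_\psi^2\bar w$$
three and four times in $\psi$. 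One obtains a linear parabolic equation for $\partial_\psi^k\phi$ with principal part $\partial_x-\sqrt w\,\partial_\psi^2$ and a forcing that is a polynomial in the already-estimated quantities $\phi,\partial_\psi\phi,\partial_\psi^2\phi,\partial_x\phi,\partial_\psi^k\bar w$. Plugging in the bounds of Theorem~\ref{thm:decay0-V} shows the forcing is dominated by $C(x+1)^{-(k+2)/2}e^{-c\psi^2/(x+1)}$.

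\textbf{Barrier construction.} For each $k\in\{3,4\}$, take a trial barrier of the Gaussian type already used in the paper,
$$B_k(x,\psi)=A_k(x+1)^{-(k+1)/2}e^{-c_k\psi^2/(x+1)},$$
and verify that it is a supersolution of the corresponding $\partial_\psi^k\phi$ equation: the space-time derivatives of $B_k$ produce terms with the common Gaussian factor, so after cancelling this factor the supersolution inequality reduces to an algebraic constraint on $A_k,c_k$ which is satisfiable for $c_k$ slightly smaller than the $c$ of Theorem~\ref{thm:decay0-V}. The initial ($x=N$) and boundary ($\psi=0$) traces of $\partial_\psi^k\phi$ can be bounded, after waiting time $N$, using the estimates of Theorem~\ref{thm:decay0-V} pushed forward by local parabolic regularity applied to the equation for $\phi$ on a bounded strip. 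The maximum principle then gives $|\partial_\psi^k\phi|\leq B_k$, and combining with the Blasius estimate yields $|\partial_\psi^k w|\leq C(x+1)^{-k/2}e^{-c\psi^2/(x+1)}$. Substituting in the identities of the first paragraph produces the exponents $3/4$ and $1/2$ of the theorem: the latter picks up an extra half-power loss from the factor $\sqrt w$.

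\textbf{Main obstacle.} The delicate point is the degeneracy of the equation at $\psi=0$: the expansion of $\partial_\psi^2(\sqrt w\,\partial_\psi^2 w)$ involves $(\partial_\psi w)^2/w^{3/2}$, which is singular on the boundary; it is only through the vanishing $w(x,0)=0$ combined with $\partial_\psi w(x,0)>0$ that the boundary trace of $\partial_x\partial_\psi^2 w$ is well defined. The restriction to $x>N$ in the statement is precisely what is needed to ensure $\partial_\psi w(x,0)$ is bounded below uniformly in $x$ (for large $x$ it approaches the Blasius value $f''(0)/\sqrt{x+1}\cdot$ a normalising factor) so that these boundary traces are under control and the barrier inequalities close. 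Beyond this, the argument is a careful but mechanical extension of the maximum principle framework already set up for Theorem~\ref{thm:decay0-V}.
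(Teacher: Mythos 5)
Your reduction to pure $\psi$-derivatives of order three and four does not survive the boundary degeneracy, and this is where the proposal breaks. Since $w(x,0)=0$ with $\partial_\psi w(x,0)>0$, the solution behaves like $w\sim c\,\psi$ near $\psi=0$, and third and higher $\psi$-derivatives are \emph{not} bounded there. Already for the Blasius part this is explicit: using $\partial_\psi\bar w=\tfrac{2}{\sqrt{x+1}}f''(\zeta)$ and $\partial_\psi\zeta=\tfrac{1}{\sqrt{x+1}\,f'(\zeta)}$ one finds
\begin{equation*}
\partial_\psi^3\bar w=\frac{2}{(x+1)^{3/2}}\,\frac{f''''f'-f'''f''}{(f')^3}\sim-\frac{1}{2(x+1)^{3/2}\zeta}\sim-\frac{c}{(x+1)^{5/4}\sqrt{\psi}}\quad\text{as }\psi\to0^+,
\end{equation*}
so your claim $|\partial_\psi^k\bar w|\leq C(x+1)^{-k/2}e^{-c\psi^2/(x+1)}$ for $k=3,4$ is false, and there is no reason the perturbation cancels this singularity for $w$. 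Consequently the maximum-principle scheme for $\partial_\psi^3\phi,\partial_\psi^4\phi$ has no admissible data on $\psi=0$, and the zeroth-order coefficient it inherits, essentially $3\partial_\psi^2\sqrt w\approx-\tfrac{3(\partial_\psi w)^2}{4w^{3/2}}$, is singular \emph{and} of the unfavorable sign, so the Gaussian barriers $B_k$ cannot be verified as supersolutions; the "forcing dominated by $C(x+1)^{-(k+2)/2}$" claim fails near the boundary for the same reason. Even granting interior bounds, the final substitution does not give the theorem: with only the flat bound $|\partial_\psi^2 w|\leq C(x+1)^{-1}$ of Theorem \ref{thm:decay0-V}, the term $\tfrac{\partial_\psi w}{2\sqrt w}\partial_\psi^2 w$ in $\partial_{\psi x}w$ is only $O\big((x+1)^{-5/4}\psi^{-1/2}\big)$, which exceeds $(x+1)^{-3/4}$ when $\psi\lesssim(x+1)^{-1}$. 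What is needed, and what your route never produces, is the extra vanishing in $\psi$ of $\partial_x w$ and $\partial_x^2 w$ near $\psi=0$.

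This is exactly how the paper proceeds, and why its intermediate objects are different from yours: it estimates $F=\partial_x^2 w$ directly from its own parabolic equation $\partial_xF-\sqrt w\,\partial_\psi^2F-\tfrac{3\partial_xw}{2w}F=-\tfrac{3(\partial_xw)^3}{4w^2}$, where the concavity $\partial_xw\leq0$ (Lemma \ref{prop:uyy-upper}) makes the zeroth-order coefficient favorable and Proposition \ref{52} controls the cubic forcing; the barriers are built from $\partial_\psi w$ (Lemma \ref{lem:com-w}) and ridged piecewise profiles vanishing like $h^{1-\alpha}$ at the boundary (Propositions \ref{Tm73pp} and \ref{Tm73}), with the initial slice at $x=x_1$ supplied by weighted interior Schauder estimates (Lemma \ref{add1}) since the hypotheses give no higher-order control of $u_0$. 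Then $\partial_x\partial_\psi^2w=\tfrac{\partial_x^2w}{\sqrt w}-\tfrac{(\partial_xw)^2}{2w^{3/2}}$ is harmless at $\psi=0$ precisely because the numerators vanish like $h^{1-\alpha}$ and $h$, and $\partial_{\psi x}w$ follows by the finite-difference interpolation with step $(x+1)^{-1/4}$. To repair your argument you would have to replace the bounds on $\partial_\psi^3w,\partial_\psi^4w$ by weighted ones degenerating at $\psi=0$, identify a usable sign structure in their equations, and supply boundary and initial data for them — at which point you are essentially forced back to estimating $\partial_x$-derivatives, as the paper does. (Minor point: your identity for $\partial_x^2w$ has an extra factor $\sqrt w$; it should read $\partial_x^2w=\tfrac12(\partial_\psi^2w)^2+\sqrt w\,\partial_x\partial_\psi^2w$.)
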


The proof of Theorem \ref{thm:decay0-V} and Theorem \ref{thm:w-decay} used the maximum principle technique. The key ingredient is to find a series of barrier functions (with ridges), whose constructions depend on the structure of Blasius profile. In fact, we provide the pointwise estimates including the decay rate with respect to $\psi$ near $\psi=0,$ which is crucial when we derive the decay estimates under the Euler coordinates from the results obtained under the Von Mises coordinates.

\section{Blasius profile and Von Mises coordinates}\label{Prl}

\subsection{Blasius profile}

The Blasius profile $f(\zeta)$ satisfies
\begin{align}\label{BE}\begin{split}
&\frac{1}{2}ff''+f'''=0,\,\,f(0)=f'(0)=0,
\\&f'(\zeta)\rightarrow1\quad \text{and}\quad \frac{f(\zeta)}{\zeta}\rightarrow1\quad \,\text{as}\,\,\zeta\rightarrow+\infty,
\\&0\leq f'(\zeta)\leq 1\quad \text{and}\quad f''(\zeta)\geq 0\quad \text{for}\,\,\zeta\geq 0\\
&0< f''(0)=b_0,\quad f'''(\zeta)<0\quad \text{for}\,\,\zeta>0.
\end{split}
\end{align}
There exist positive constants $C_1,C_2$ so that
\begin{align}\label{blainf}
   1-f'(\zeta)\sim \zeta^{-1}e^{-\zeta^2C_1-C_2\zeta},\quad f''(\zeta)\sim \zeta(1-f')\sim e^{-\zeta^2C_1-C_2\zeta},
\end{align}
as $\zeta\rightarrow+\infty.$

\begin{lemma}\label{lem:f}
It holds that
\beno
f^{(3)}(0)=0,\quad f^{(4)}(0)=0,\quad f^{(5)}(0)<0.
\eeno
\end{lemma}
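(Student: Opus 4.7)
The plan is to simply differentiate the Blasius equation \eqref{BE} repeatedly and evaluate at $\zeta=0$, using the prescribed boundary values $f(0)=f'(0)=0$ and $f''(0)=b_0>0$. There is no real obstacle here; the result is a routine computation that reflects the flatness of $f$ at the wall up to fourth order.

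Concretely, I would first rewrite the Blasius ODE in the form
\begin{equation*}
f''' = -\tfrac{1}{2} f f''.
\end{equation*}
Evaluating at $\zeta=0$ and using $f(0)=0$ gives $f'''(0)=0$ immediately. Differentiating once yields
\begin{equation*}
f^{(4)} = -\tfrac{1}{2}\bigl(f' f'' + f f'''\bigr),
\end{equation*}
and at $\zeta=0$ both summands vanish because $f'(0)=f(0)=0$, hence $f^{(4)}(0)=0$.

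Differentiating once more gives
\begin{equation*}
f^{(5)} = -\tfrac{1}{2}\bigl((f'')^2 + 2 f' f''' + f f^{(4)}\bigr).
\end{equation*}
Evaluating at $\zeta=0$ kills the last two terms (again by $f'(0)=f(0)=0$), leaving
\begin{equation*}
f^{(5)}(0) = -\tfrac{1}{2} (f''(0))^2 = -\tfrac{1}{2} b_0^2 < 0,
\end{equation*}
since $b_0>0$ by \eqref{BE}. This completes the three required identities/inequalities. The step that one might worry about a priori is whether $f$ is genuinely $C^5$ up to the boundary $\zeta=0$, but this is standard for the Blasius similarity ODE since $f$ is analytic at $\zeta=0$ (the equation is an autonomous third-order ODE with smooth coefficients and initial data $(0,0,b_0)$), so the differentiations above are fully justified.
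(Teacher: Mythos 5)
Your proposal is correct and follows essentially the same route as the paper: differentiate the Blasius equation $f'''=-\tfrac12 ff''$ twice and evaluate at $\zeta=0$ using $f(0)=f'(0)=0$ and $f''(0)=b_0>0$, yielding $f^{(5)}(0)=-\tfrac12 b_0^2<0$. The additional remark on smoothness of $f$ at $\zeta=0$ is fine but not needed beyond what the paper already assumes.
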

\begin{proof}
By $\frac{1}{2}ff''+f'''=0,$ we have $f^{(3)}(0)=0$ and
\begin{align}\label{star}\begin{split}
&\frac{1}{2}f'f''+\frac{1}{2}ff^{(3)}+f^{(4)}=0,\\
&\frac{1}{2}(f'')^2+\frac{1}{2}f'f^{(3)}+\frac{1}{2}ff^{(4)}+\frac{1}{2}f'f^{(3)}+f^{(5)}=0.
\end{split}\end{align}
By \eqref{BE} and evaluating at $\zeta=0,$ the result follows.
\end{proof}

\subsection{A comparison lemma}

\begin{lemma}\label{lem:com}
There exist positive constants $c<1$ and $C>1$ depending on $w_0$ such that
 \begin{align}\label{star1}
 c\bar{w}\leq w\leq C\bar{w}\quad \text{in}\quad \R_+\times \R_+.
\end{align}
\end{lemma}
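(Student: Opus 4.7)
The plan is to apply the parabolic maximum principle in Von Mises coordinates to the auxiliary functions $\Phi := C\bar w - w$ and $\Psi := w - c\bar w$, taking advantage of the fact that, since \eqref{starmain} is nonlinear, scaling $\bar w$ by a constant $\neq 1$ produces a defect term of definite sign.

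First, from \eqref{BE} one has $f''' < 0$ on $(0,\infty)$, and combined with $2\partial_y^2 \bar u = \sqrt{\bar w}\,\partial_\psi^2 \bar w$ from \eqref{PvM} this yields $\partial_\psi^2 \bar w < 0$ on $\R_+\times\R_+$. A direct computation then gives, for any $\kappa > 0$,
\beno
\partial_x(\kappa \bar w) - \sqrt{\kappa \bar w}\,\partial_\psi^2(\kappa \bar w) \;=\; \kappa(1 - \sqrt{\kappa})\sqrt{\bar w}\,\partial_\psi^2 \bar w,
\eeno
which is strictly negative for $\kappa > 1$ (so $C\bar w$ is a strict supersolution) and strictly positive for $\kappa \in (0,1)$ (so $c\bar w$ is a strict subsolution).

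Next I would fix $c < 1 < C$ so that $c\bar w_0 \leq w_0 \leq C\bar w_0$ on $[0,\infty)$. This is possible because near $\psi = 0$ both initial data are linear in $\psi$ with strictly positive slope (using $u_0(y) \sim u_0'(0)y$ and $f'(y) \sim b_0 y$, one gets $w_0(\psi) \sim 2u_0'(0)\psi$ and $\bar w_0(\psi) \sim 2b_0\psi$), both tend to $1$ as $\psi \to \infty$, and both are continuous and strictly positive on $(0,\infty)$, so $w_0/\bar w_0$ is bounded above and below on $(0,\infty)$. For the upper bound, using $\sqrt{C\bar w} - \sqrt{w} = \Phi/(\sqrt{C\bar w}+\sqrt{w})$, one derives
\beno
\partial_x\Phi - \sqrt{C\bar w}\,\partial_\psi^2\Phi - \frac{\partial_\psi^2 w}{\sqrt{C\bar w}+\sqrt{w}}\,\Phi \;=\; C(1-\sqrt{C})\sqrt{\bar w}\,\partial_\psi^2\bar w,
\eeno
whose right-hand side is strictly positive for $\psi > 0$. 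On the parabolic boundary $\Phi \geq 0$ by construction ($\Phi(x,0) = 0$, $\Phi(0,\psi) \geq 0$, $\Phi \to C-1 > 0$ as $\psi \to \infty$). If $\Phi$ attained a negative interior minimum at some $(x_*, \psi_*)$, then $\partial_x\Phi \leq 0$, $\partial_\psi^2\Phi \geq 0$, and moreover $\partial_\psi^2 w = C\partial_\psi^2\bar w - \partial_\psi^2\Phi < 0$ there; combined with $\Phi < 0$, this forces every term on the left-hand side to be nonpositive while the right-hand side is strictly positive, a contradiction. The analogous argument applied to $\Psi$ gives $\Psi \geq 0$.

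The principal technical obstacle is legitimizing the maximum principle on an unbounded $\psi$-domain with an operator that degenerates at $\psi = 0$. I would handle this in the standard way: first restrict to a compact rectangle $[0,X]\times[0,M]$, exploit the asymptotics $w, \bar w \to 1$ as $\psi\to\infty$ to make $\Phi$ favorably signed on $\psi = M$ for $M$ large (uniformly on $[0,X]$), add a small perturbation such as $\varepsilon(x+1)$ to turn $\Phi$ into a strict supersolution whose minimum must be attained on the parabolic boundary, then apply the pointwise argument above and send $\varepsilon \to 0$ and $M, X \to \infty$.
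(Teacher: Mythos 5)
Your overall strategy — compare $w$ to scaled Blasius profiles $c\bar w$ and $C\bar w$ via the maximum principle, exploiting $\partial_\psi^2\bar w<0$ and the scaling defect created by the nonlinear diffusion $\sqrt{\cdot}$ — is exactly the paper's. Two issues, one cosmetic and one substantive. The cosmetic one: the sign claim after your first display is backwards. For $\kappa>1$ the quantity $\kappa(1-\sqrt{\kappa})\sqrt{\bar w}\,\partial_\psi^2\bar w$ is strictly \emph{positive} (product of $(+)(-)(+)(-)$), which is what makes $C\bar w$ a supersolution; similarly it is negative for $\kappa<1$. Your later computation for $\Phi$ uses the correct sign, so nothing downstream is contaminated, but as stated the sentence contradicts itself.

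The substantive issue is the phrase ``the analogous argument applied to $\Psi$.'' Your $\Phi$ argument hinges on the zeroth-order coefficient $-\partial_\psi^2 w/(\sqrt{C\bar w}+\sqrt w)$ having the right sign at the putative negative minimum of $\Phi$, which you obtain from $\partial_\psi^2 w = C\partial_\psi^2\bar w - \partial_\psi^2\Phi$: the first term is $<0$ and the subtracted term is $\geq 0$, so $\partial_\psi^2 w<0$. For $\Psi=w-c\bar w$, the literal analogue gives
\beno
\partial_x\Psi - \sqrt{c\bar w}\,\partial_\psi^2\Psi - \frac{\partial_\psi^2 w}{\sqrt{w}+\sqrt{c\bar w}}\,\Psi = c(\sqrt{c}-1)\sqrt{\bar w}\,\partial_\psi^2\bar w > 0,
\eeno
but now at a negative minimum of $\Psi$ you have $\partial_\psi^2 w = c\,\partial_\psi^2\bar w + \partial_\psi^2\Psi$, i.e.\ a negative plus a nonnegative term, so the sign of $\partial_\psi^2 w$ is genuinely indeterminate; if $\partial_\psi^2 w>0$ there, the zeroth-order term becomes $>0$ and the contradiction evaporates. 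The fix is to regroup so that the zeroth-order coefficient involves $\partial_\psi^2\bar w$ (whose sign is known) rather than $\partial_\psi^2 w$: keep $\sqrt{w}$ as the diffusion coefficient, write $\partial_x\Psi - \sqrt{w}\,\partial_\psi^2\Psi = c(\sqrt{w}-\sqrt{\bar w})\partial_\psi^2\bar w$, and split $\sqrt{w}-\sqrt{\bar w} = (\sqrt{w}-\sqrt{c}\sqrt{\bar w}) + (\sqrt{c}-1)\sqrt{\bar w}$, after which both pieces on the right-hand side are strictly positive at a negative minimum of $\Psi$ (because there $\sqrt w<\sqrt c\sqrt{\bar w}$ and $\partial_\psi^2\bar w<0$), giving the contradiction. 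This is exactly the decomposition the paper uses for both bounds, and it has the virtue of being symmetric between $c$ and $C$; your asymmetric choice of diffusion coefficient ($\sqrt{C\bar w}$ vs.\ $\sqrt{c\bar w}$) and the resulting $\partial_\psi^2 w$-coefficient is what breaks the argument in one direction. Your care with the unbounded-domain/degenerate-boundary issues at the end is a welcome addition that the paper glosses over.
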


\begin{proof}
Since $u_0(0)=0$ and  $u_0'(0)>0,$ by \eqref{PvM},
$$w_0(0)=0,\quad\partial_\psi w(0,\psi)\sim 1\quad \psi \,\, \text{near} \,\, 0,$$
which gives
\begin{align}\nonumber
 \bar{w}(0,\psi) \sim\psi\sim w(0,\psi)\quad \psi\,\,\text{near}\,\,0.
\end{align}
Thanks to $u_0(y)>0$ and $\bar{u}(0,y)>0$ for $y>0$, we have $\bar{w}>0$ and $w>0$ for $\psi>0.$
Moreover, $w(x,\psi),\,\,\bar{w}(x,\psi)\rightarrow 1$  as $\psi\rightarrow+\infty$.
Hence, away from $0$, both $w_0(\psi)$ and $\bar{w}(0,\psi)$ have positive minimum and maximum.
Then there exist some positive constants $c<1$ and $1<C$ so that
\begin{align}\label{nzic}
 cw_0(\psi)\leq \bar{w}(0,\psi) \leq &Cw_0(\psi).
\end{align}

Take $c$ and $C$ to be the constants in \eqref{nzic}. For any positive constant $b,$ we have
$$\partial_x(w-b\bar{w})-\sqrt{w}\partial_\psi^2(w-b\bar{w})=(\sqrt{b}\sqrt{w}-b\sqrt{\bar{w}})\partial_\psi^2 \bar{w}+(b-\sqrt{b})\sqrt{w}\partial_\psi^2 \bar{w},$$
where we note $\partial_\psi^2 \bar{w}<0$ for $\psi>0.$

 We  first prove that $w-C\bar{w}\leq 0.$ Otherwise, since $w-C\bar{w}=0$ on $\psi=0$,  \eqref{star1} holds on $x=0$ due to \eqref{nzic}, and $w-C\bar{w}\rightarrow1-C<0$ as $\psi\rightarrow+\infty$, a positive maximum is obtained at some point $(x_0,\psi_0)\in(0,x_0]\times (0,+\infty)$ with $(w-C\bar{w})(x_0,\psi_0)>0$. This implies
 $$(\sqrt{w}-\sqrt{C}\sqrt{\bar{w}})(x_0,\psi_0)>0.$$
On the other hand, at $(x_0,\psi_0),$
$$\partial_x(w-C\bar{w})-\sqrt{w}\partial_\psi^2(w-C\bar{w})=\sqrt{C}(\sqrt{w}-\sqrt{C}\sqrt{\bar{w}})\partial_\psi^2 \bar{w}+(C-\sqrt{C})\sqrt{w}\partial_\psi^2 \bar{w}<0,$$
which contradicts to the property of maximum point. Hence, $w-C\bar{w}\leq 0.$

The proof of  $w-c\bar{w}\geq0$ is similar. At the negative minimum point, $\sqrt{w}<\sqrt{c}\sqrt{\bar{w}}$ and there holds
$$
\partial_x(w-c\bar{w})-\sqrt{w}\partial_\psi^2(w-c\bar{w})=(\sqrt{c}\sqrt{w}-c\sqrt{\bar{w}})\partial_\psi^2 \bar{w}+(c-\sqrt{c})\sqrt{w}\partial_\psi^2 \bar{w}>0,
$$
which also leads to a contradiction.
\end{proof}

\subsection{Von Mises coordinates} By \eqref{psi1}, we introduce the notation
\begin{align}\label{y}
   y(\psi;u)=\int_0^\psi\frac{1}{\sqrt{w}(x,\psi')} d\psi'.
\end{align}
In particular,  $y(\psi;\bar{u})=\int_0^\psi\frac{1}{\sqrt{\bar{w}}(x,\psi')} d\psi',$ corresponds to the Blasius profile.
It follows from Lemma \ref{lem:com} that there exist positive constants $c$ and $C$ such that
\begin{align}\label{yyyorder}
cy(\psi;\bar{u})\leq y(\psi;u)\leq Cy(\psi;\bar{u}).
\end{align}

In what follows, we always denote
\ben\label{def:h-zeta}
 h=\frac{\psi}{\sqrt{x+1}},\quad \zeta=\frac{y(\psi;\bar{u})}{\sqrt{x+1}}.
 \een

We infer from \eqref{psi1} that for $y=y(\psi;\bar u)$,
\begin{align}\nonumber
   \psi=\int_0^{y} \bar{u}(x,y')dy'=\sqrt{x+1}\int_0^{\zeta} f'(\zeta)d\zeta=\sqrt{x+1}f(\zeta),
\end{align}
which gives
\begin{align}\label{psify}
   h=\frac{\psi}{\sqrt{x+1}}=f(\zeta).
\end{align}
Since $f''\geq 0,\,f''(0)>0$ and $f'(0)=0$, it holds that $f'(\zeta)>0$ for $\zeta>0,$ and thus $f$ is strictly increasing.
Hence, $\zeta\stackrel{one \, to\, one}{\longleftrightarrow}h$.  By \eqref{BE}, there exists a large positive constant $M$ such that
\begin{align}\label{zeps}
\frac{1}{2}\zeta\leq \frac{\psi}{\sqrt{x+1}}=f(\zeta)\leq 2\zeta
\end{align}
when $\zeta\ge M$ or $h\ge M$.

Since $f'(\zeta)\sim \zeta,\,\,f(\zeta)\sim \zeta^2$ for $\zeta$  near $0$ due to \eqref{BE}, it holds that for any $a>0$,
 \begin{align}\label{wh}
  c_a h\leq \bar{w}(x,\psi)\leq C_a h\quad\text{for}\,\, h\leq a.
 \end{align}

 Recall $\bar w=\bar u^2=f'(\zeta)^2$. By \eqref{PvM} and \eqref{starmain}, we have the following relations which will be frequently used:
\begin{align}\label{fu}
\begin{split}
-\partial_{x}\bar{w}=\frac{1}{x+1}ff'',\quad \partial_\psi \bar{w}= \frac{2}{\sqrt{x+1}}f''
.\end{split}
\end{align}
From \eqref{fu}, \eqref{BE} and \eqref{blainf}, it holds  hat
\begin{align}\label{xw}\begin{split}
   -\partial_{x}\bar{w}&\geq c\frac{1}{x+1}\zeta^2\quad\text{for}\quad\zeta\leq 1,\\
   -\partial_{x}\bar{w}&\geq c\frac{1}{x+1}\zeta f''\quad\text{for}\quad\zeta> 1.
   \end{split}
\end{align}

From Lemma \ref{lem:f}, \eqref{BE} and \eqref{fu}, it is easy to see that

\begin{lemma}\label{771} For any fixed $x\in[0,+\infty),$ $-\partial_{x}\bar{w}$ is increasing with respect to $\psi$ and
$-\partial_{x}\bar{w}$ is positive for $\psi>0$.
\end{lemma}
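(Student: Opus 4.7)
The starting point is the representation $-\partial_{x}\bar w=\frac{f(\zeta)f''(\zeta)}{x+1}$ from \eqref{fu}, coupled with the implicit relation $\psi=\sqrt{x+1}\,f(\zeta)$ from \eqref{psify}. At fixed $x$, the map $\psi\mapsto\zeta$ is a smooth, strictly increasing bijection of $[0,\infty)$ with itself, so monotonicity and positivity claims in $\psi$ translate directly into the corresponding statements in $\zeta$.

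The positivity half is immediate from \eqref{BE}. Indeed, the combination $f''(0)=b_0>0$, $f'''<0$ on $(0,\infty)$, and $f''\ge 0$ forces $f''(\zeta)>0$ for every $\zeta\ge 0$; together with $f(0)=f'(0)=0$, it also yields $f(\zeta)>0$ for $\zeta>0$. Hence $-\partial_{x}\bar w=\frac{ff''}{x+1}>0$ whenever $\psi>0$.

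For the monotonicity, I would differentiate in $\psi$ at fixed $x$ by the chain rule. Using $\partial_\psi\zeta=\bigl(\sqrt{x+1}\,f'(\zeta)\bigr)^{-1}$ from \eqref{psify},
\[
\partial_\psi\bigl(-\partial_{x}\bar w\bigr)=\frac{f'(\zeta)f''(\zeta)+f(\zeta)f'''(\zeta)}{(x+1)^{3/2}\,f'(\zeta)},
\]
and substituting the Blasius ODE $f'''=-\tfrac{1}{2}ff''$ from \eqref{BE} collapses the numerator to $f''\bigl(f'-\tfrac{1}{2}f^2\bigr)$. Since $f',f''>0$ on $(0,\infty)$, the sign of $\partial_\psi(-\partial_x\bar w)$ agrees with that of the auxiliary function $g(\zeta):=2f'(\zeta)-f(\zeta)^2$.

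The main obstacle is then verifying $g\ge 0$. Near the wall this is precisely where Lemma \ref{lem:f} enters: the vanishings $f^{(3)}(0)=f^{(4)}(0)=0$ together with $f^{(5)}(0)<0$ give the Taylor expansions $f(\zeta)=\tfrac{b_0}{2}\zeta^2+O(\zeta^5)$ and $f'(\zeta)=b_0\zeta+O(\zeta^4)$, from which $g(\zeta)=2b_0\zeta+O(\zeta^4)$ is manifestly positive for small $\zeta>0$. For the remaining range I would analyze $g$ through its own ODE $g'=2(f''-ff')$ inherited from \eqref{BE} together with the superexponential decay $f''\sim e^{-C_1\zeta^2-C_2\zeta}$ from \eqref{blainf}; the careful sign analysis of $g$ across the full half-line, combining the near-wall Taylor structure supplied by Lemma \ref{lem:f} with the decay at infinity in \eqref{blainf}, is the technical heart of the lemma.
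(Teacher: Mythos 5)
Your positivity half is complete and correct, and your reduction of the monotonicity claim is the natural one: writing $-\partial_x\bar w=\frac{ff''}{x+1}$ from \eqref{fu} and $\partial_\psi=\frac{1}{\sqrt{x+1}\,f'}\partial_\zeta$ from \eqref{psify}, the Blasius equation gives $\partial_\psi(-\partial_x\bar w)=\frac{f''\,(2f'-f^2)}{2(x+1)^{3/2}f'}$, so everything hinges on the sign of $g:=2f'-f^2$. For comparison, the paper offers no written proof at all beyond asserting that the lemma follows from Lemma \ref{lem:f}, \eqref{BE} and \eqref{fu}, so this reduction is already more than the text provides.

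The genuine gap is precisely the step you defer: the inequality $g\geq 0$ on all of $(0,+\infty)$, which you call the technical heart, is false, so the proposed ``sign analysis across the full half-line'' cannot be completed. Since $0\leq f'\leq 1$ and $f(\zeta)/\zeta\to 1$ by \eqref{BE}, one has $g(\zeta)=2f'(\zeta)-f(\zeta)^2\leq 2-f(\zeta)^2<0$ as soon as $f(\zeta)>\sqrt{2}$, i.e.\ for all $\zeta$ beyond some finite $\zeta^*$, so $-\partial_x\bar w$ is strictly \emph{decreasing} in $\psi$ there. This is forced even without the computation: at fixed $x$, $-\partial_x\bar w=\frac{ff''}{x+1}$ vanishes at $\psi=0$, is positive for $\psi>0$, and tends to $0$ as $\psi\to+\infty$ because $ff''\sim \zeta e^{-C_1\zeta^2-C_2\zeta}$ by \eqref{blainf}; a function with these properties cannot be increasing on the whole half-line. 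What your chain-rule computation actually yields is monotonicity exactly on the near-wall region $\{2f'\geq f^2\}$ (equivalently $\zeta\leq\zeta^*$, i.e.\ $h$ below a fixed constant), and there the crude behavior $f'\sim b_0\zeta$, $f\sim\frac{b_0}{2}\zeta^2$ already suffices, so the refined vanishing $f^{(3)}(0)=f^{(4)}(0)=0$ from Lemma \ref{lem:f} is not what is missing. You should therefore either restrict the monotonicity assertion to that near-wall range (note the lemma is not invoked again in the paper, and only positivity and the asymptotics \eqref{xw}, \eqref{xworder} of $-\partial_x\bar w$ are used later), or record that the global statement as written requires correction; only the positivity claim admits a full proof.
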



If we use $(\tilde{x},\psi)$ to denote the Von Mises variables to avoid
confusion for a while, then it holds that
\ben
\partial_{\tilde{x}}= \partial_x-\frac{ \int_0^{y}\bar{u}_x(x,y')dy'}{\bar{u}}\partial_{y}.\label{eq:EV}
\een

\subsection{Some properties of $w$}\label{Pw}
From Theorem 2.1.14, Lemma 2.1.9 and Lemma 2.1.12 in \cite{Olei} and Lemma 3.1 in \cite{DM}, we know that

\begin{itemize}

\item[1.] $\partial_x w(x,0)=0.$

\item[2.] $\displaystyle\lim_{\psi\to+\infty} \partial^2_\psi w(x,\psi)= 0,$ which implies
 \begin{align}\label{wxinf0}
 \displaystyle\lim_{\psi\to+\infty} \partial_x w(x,\psi)= 0.
 \end{align}

\item[3.]  For any $\bar{x}>0,$ there exist $y_0>0,m>0$ such that
$$
\partial_yu(x,y)\geq m\quad \text{in}\,\, [0,\bar{x}]\times[0,y_0].
$$

\item[4.]  For any $\bar{x}>0,$ there exist positive constants $\psi_1$ and $M$  such that
\begin{align}\label{owp}
|\partial_x w|\leq M\psi^{1-\beta}\quad \text{in}\,\,[0,\bar{x}]\times[0,\psi_1],
\end{align}
where $\beta\in(0,\frac{1}{2})$ (see page 25 in \cite{Olei}).
\end{itemize}


\section{Convergence to the Blasius solution}\label{usrd}

In this section, we prove Theorem \ref{thm:decay0}. Throughout this section, we assume that $u$ is an Oleinik's solution with the initial data satisfying \eqref{dk0}.  We denote
$$Lv=\partial_x v-\sqrt{w}\partial_{\psi}^2 v.$$

\subsection{The perturbation equation}
We denote
\beno
 \phi(x,\psi)=w(x,\psi)-\bar{w}(x,\psi).
 \eeno
 A straight calculation gives
\begin{align}\label{Main}\begin{split}
\partial_x& \phi-\sqrt{w}\partial_{\psi}^2 \phi+A\phi=0, \\ A(x,\psi)&=-\frac{\partial_\psi^2 \bar{w}}{\sqrt{\bar{w}}+\sqrt{w}}(x,\psi)=-\frac{\partial_x \bar{w}}{\sqrt{\bar{w}}(\sqrt{\bar{w}}+\sqrt{w})}(x,\psi)\\&=-2\frac{\bar{u}_{yy}|_{(x,y)=(x,y(\psi;\bar{u}))}}{\bar{u}|_{(x,y)=(x,y(\psi;\bar{u}))}\big(\bar{u}|_{(x,y)=(x,y(\psi;\bar{u}))}+u|_{(x,y)=(x,y(\psi;u))})}.
\end{split}
\end{align}

Let us derive some useful properties of $A$.

\begin{lemma}\label{lem:A}
It holds that for any $(x,\psi)\in \R_+\times \R_+$,
$$|A(x,\psi)|\leq\frac{C}{x+1},$$
and for any $k_0\in(0,+\infty),$ there exists a positive constant $\lambda_{k_0}$ such that
$$A(x,\psi)>\frac{\lambda_{k_0}}{x+1}\quad\text{for}\quad \zeta=\frac{y(\psi;\bar{u})}{\sqrt{x+1}}\leq k_0,$$
which implies that $A>0$.
\end{lemma}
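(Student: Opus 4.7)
The plan is to rewrite $A$ in the self-similar variable $\zeta$ and reduce both bounds to properties of a single profile function on $[0,\infty)$. Using \eqref{fu}, which gives $-\partial_x\bar w = \tfrac{1}{x+1}f(\zeta)f''(\zeta)$ and $\sqrt{\bar w}=f'(\zeta)$, together with the comparison Lemma \ref{lem:com} which supplies $\sqrt c\,f'(\zeta)\leq\sqrt{w(x,\psi)}\leq\sqrt C\,f'(\zeta)$, one has
$$A(x,\psi)=\frac{1}{x+1}\,\frac{f(\zeta)f''(\zeta)}{f'(\zeta)\bigl(f'(\zeta)+\sqrt{w(x,\psi)}\bigr)},$$
so $A$ is comparable, up to positive multiplicative constants, to $\frac{1}{x+1}g(\zeta)$ where $g(\zeta):=\frac{f(\zeta)f''(\zeta)}{f'(\zeta)^{2}}$. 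It thus suffices to show that $g$ is bounded on $[0,\infty)$ and bounded below by a positive constant on every compact $[0,k_0]$.

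Next I would analyse $g$. Near $\zeta=0$ I would use Lemma \ref{lem:f} together with $f''(0)=b_0>0$ to obtain the Taylor expansions $f(\zeta)=\tfrac{b_0}{2}\zeta^2+O(\zeta^5)$, $f'(\zeta)=b_0\zeta+O(\zeta^4)$ and $f''(\zeta)=b_0+O(\zeta^3)$, from which $g(\zeta)\to 1/2$ as $\zeta\to 0^+$, removing the apparent singularity at the origin. As $\zeta\to+\infty$, \eqref{BE} gives $f(\zeta)\sim\zeta$ and $f'(\zeta)\to 1$, while \eqref{blainf} gives $f''(\zeta)\sim e^{-C_1\zeta^2-C_2\zeta}$, so $g(\zeta)\to 0$. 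Moreover $g$ is continuous and strictly positive on $(0,\infty)$ because $f,\,f',\,f''$ are all strictly positive there. Hence $g$ extends continuously to $[0,\infty)$ with positive values, tends to $0$ at infinity, and is therefore bounded on $[0,\infty)$ while admitting a positive lower bound on every compact $[0,k_0]$.

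Assembling, the upper estimate $|A|\leq C/(x+1)$ follows from the boundedness of $g$, and the lower estimate $A\geq\lambda_{k_0}/(x+1)$ whenever $\zeta\leq k_0$ follows from the positive lower bound of $g$ on $[0,k_0]$. I expect the main subtlety to be the removable singularity of $g$ at $\zeta=0$, where both numerator and denominator vanish to second order; this is exactly where Lemma \ref{lem:f} is indispensable, as it pins down the quadratic leading behaviour of $f$ and $f'$ and ensures that $g$ has a nonzero finite limit at the origin rather than blowing up or degenerating.
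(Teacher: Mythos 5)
Your proof is correct and follows essentially the same route as the paper: both reduce $A$ via Lemma \ref{lem:com} and \eqref{fu} to a quantity comparable to $\frac{1}{x+1}$ times a profile ratio in $\zeta$, and then check a finite positive limit at $\zeta=0$ (your $g=\frac{ff''}{(f')^2}\to\frac12$ is, by the Blasius equation $f'''=-\frac12 ff''$, just twice the paper's ratio $\frac{-f'''}{(f')^2}\to\frac14$) together with boundedness from \eqref{BE} and \eqref{blainf}. The removable singularity at the origin, which you flag as the main subtlety, is handled in the paper in exactly the same way.
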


\begin{proof}
By Lemma \ref{lem:com}, we have
$$u(x,y(\psi;u))=\sqrt{w}(x,\psi)\sim \sqrt{\bar{w}}(x,\psi)=\bar{u}(x,y(\psi;\bar{u})).$$
Then we infer from \eqref{Main} that
  \begin{align}\label{Ah}
  \frac{c}{x+1}\frac{-f'''(\zeta)}{(f'(\zeta))^2}\leq A\leq\frac{C}{x+1}\frac{-f'''(\zeta)}{(f'(\zeta))^2},
  \end{align}
  where $\zeta=\frac{y(\psi;\bar{u})}{\sqrt{x+1}}.$ Due to \eqref{BE}, we have
$$\frac{-f'''(\zeta)}{(f'(\zeta))^2}=\frac{\frac{1}{2}ff''(\zeta)}{(f'(\zeta))^2}=\frac{1}{4}\quad \text{as}\,\,\zeta\rightarrow0.$$
Then our result follows from \eqref{BE} and \eqref{blainf}.
\end{proof}

\begin{remark}
Since $A>0$, the term $A\phi$ could be viewed as a damping term. Then it is natural to expect that $\phi$ will converge to
zero in the large time.
\end{remark}

\subsection{Preliminary decay estimates}

 \begin{lemma}\label{lem:exp}
 There exist a large positive constant $C$ and a small positive constant $\varepsilon$ such that
 $$|\phi(x,\psi)|\leq Ce^{-\frac{\psi^2}{x+1}\varepsilon}\quad \text{in}\quad \R_+\times \R_+.$$
\end{lemma}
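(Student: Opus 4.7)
The plan is to construct a Gaussian barrier $B(x,\psi) = Ce^{-\varepsilon \psi^{2}/(x+1)}$, with $C$ large and $\varepsilon>0$ small, and apply the maximum principle to the perturbation equation \eqref{Main}, exploiting the damping term $A>0$ from Lemma~\ref{lem:A}.

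A direct computation gives
\begin{align*}
LB = B\left[\frac{\varepsilon\psi^{2}}{(x+1)^{2}}\bigl(1-4\sqrt{w}\,\varepsilon\bigr)+\frac{2\sqrt{w}\,\varepsilon}{x+1}\right].
\end{align*}
By Lemma~\ref{lem:com} and $f'\le 1$ from \eqref{BE}, $\sqrt{w}$ is uniformly bounded, so choosing $\varepsilon$ small makes both bracketed terms nonnegative and gives $LB\ge 0$. Combined with $A>0$, this yields $LB+AB\ge 0$, i.e.\ $B$ is a supersolution of $L+A\cdot$. Next, I match the initial data: \eqref{dk0} and \eqref{blainf} give $|u_{0}(y)-1|,\,|\bar{u}(0,y)-1|\le Ce^{-cy^{2}}$, and since both profiles tend to $1$ as $y\to\infty$, the Von Mises relations $\psi=\int_{0}^{y}u_{0}(y')\,dy'$ and $\psi=f(y)$ both satisfy $\psi/y\to 1$, so after inverting the coordinates one obtains $|\phi(0,\psi)|\le C_{0}e^{-\varepsilon_{0}\psi^{2}}$ for some $\varepsilon_{0}>0$. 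Taking $\varepsilon\le\varepsilon_{0}$ (and as in the supersolution step) and $C\ge C_{0}$ yields $|\phi(0,\psi)|\le B(0,\psi)$.

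Now consider $\Phi=\phi-B$ on $\R_{+}\times\R_{+}$. By construction $L\Phi+A\Phi\le 0$, $\Phi(0,\psi)\le 0$, $\Phi(x,0)=-C<0$, and $\Phi\to 0$ as $\psi\to\infty$ (since $w,\bar{w}\to 1$). If $\sup\Phi>0$, it must be attained at some interior point $(x_{0},\psi_{0})$ with $\psi_{0}>0$, where $\partial_{x}\Phi\ge 0$, $\partial_{\psi}^{2}\Phi\le 0$, and $A\Phi>0$, contradicting $L\Phi+A\Phi\le 0$. Hence $\phi\le B$; the symmetric argument applied to $-\phi$ gives $\phi\ge -B$.

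The main delicate point is the initial matching: the two Von Mises changes of variables (for $u_{0}$ and for $\bar{u}(0,\cdot)$) are slightly different, so one must carefully convert Gaussian decay in the Euler variable $y$ into Gaussian decay in the Von Mises variable $\psi$, using that both profiles are asymptotic to $1$. A secondary, more standard issue is that on the unbounded domain the supremum of $\Phi$ may only be approached as $\psi\to\infty$; this is handled by working first on $[0,X]\times\R_{+}$ (possibly with an $\eta$-safety margin) and using that $\phi\to 0$ at $\psi=\infty$ uniformly on compact $x$-intervals (an output of Oleinik's theory), then letting $X$ be arbitrary since the resulting bound is $X$-independent.
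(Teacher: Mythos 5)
Your proposal is correct and follows essentially the same route as the paper: the same Gaussian barrier $Ce^{-\varepsilon\psi^{2}/(x+1)}$, the same computation showing it is a supersolution of $L+A$ for small $\varepsilon$ (using boundedness of $\sqrt{w}$ and $A>0$), the same conversion of the Euler-variable decay \eqref{dk0} and \eqref{blainf} into Gaussian decay of $\phi(0,\psi)$ in $\psi$, and the same maximum-principle contradiction at an interior extremum. Your handling of attainment of the extremum on the unbounded domain is, if anything, slightly more explicit than the paper's.
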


\begin{proof}
Thanks to
 $$\psi(0,y)=\int_0^y u(0,y')dy'=\int_0^y u_0(y')dy'$$
 and $u_0(y)\rightarrow\,1$ as $y\rightarrow+\infty$, there exists a large positive constant $ N$ such that at $x=0$,
 \begin{align}\label{epyypsi}
 \f12y(\psi;u)\leq \psi\leq 2y(\psi;u)\quad \text{for}\,\,\psi>N.
\end{align}
 Hence, by \eqref{dk0} and \eqref{epyypsi},  we get
 \begin{align}\label{3.2}
|\sqrt{w_0}(\psi)-1|\le C\partial_\psi \bar{w}(0,\psi)\quad\text{for}\,\,\psi>N,
\end{align}
where we used the fact that
\begin{align}\label{psiwyb}
\begin{split}
 \partial_\psi \bar{w}(x,\psi)&=2\partial_{y} \bar{u} (x,y(\psi;\bar{u}))= \frac{2}{\sqrt{x+1}}f''\Big(\frac{y(\psi;\bar{u})}{\sqrt{x+1}}\Big),\\ \partial_\psi \bar{w}&\sim C_0\frac{2}{\sqrt{x+1}}e^{-\zeta^2C_1-C_2\zeta}\quad\text{as}\,\,\zeta\rightarrow+\infty.
\end{split}
\end{align}
On the other hand, by \eqref{BE}, for $\psi>N$,
\begin{align*}
   0\leq 1-\sqrt{\bar{w}}(0,\psi)=1-f'(\zeta)
    \leq Cf''(\zeta)=\frac{C}{2}\partial_\psi \bar{w}(0,\psi).
\end{align*}
For $\psi\leq N,$ there exists a positive constant $a_0$ such that $\partial_\psi \bar{w}(0,\psi)>a_0$. This along with \eqref{3.2} ensures that for $\psi\geq0,$
\begin{align}\label{infic}
|w_0(\psi)-\bar{w}(0,\psi)|\le C\partial_\psi \bar{w}(0,\psi).
\end{align}

Now we claim that $0\leq Ce^{-\frac{\psi^2}{x+1}\varepsilon}\pm\phi.$ Otherwise, since
$$\phi(x,0)=0,\quad \phi\rightarrow0\,\,\text{as}\,\,\,\psi\rightarrow\infty,$$
and $|\phi(0,\psi)|<Ce^{-\psi^2\varepsilon}$ for a small positive $\varepsilon$ due to \eqref{psiwyb} and \eqref{infic}, a negative minimum is obtained at some point $(x_0,\psi_0)\in(0,x_0]\times (0,+\infty)$ with $\big(Ce^{-\frac{\psi^2}{x+1}\varepsilon}\pm\phi\big)(x_0,\psi_0)<0$.

On the other hand, $L(\pm\phi)+A(\pm\phi)=0$ and
 $$Le^{-\frac{\psi^2}{x+1}\varepsilon}=e^{-\frac{\psi^2}{x+1}\varepsilon}\Big[\varepsilon\frac{\psi^2}{(x+1)^2}
 -\sqrt{w}\big(-\varepsilon\frac{2}{x+1}+\varepsilon^2\frac{4\psi^2}{(x+1)^2}\big)\Big]>0,$$
 by Lemma \ref{lem:com} and taking $\varepsilon$ small enough. Therefore, at $(x_0,\psi_0),$
 \beno
 L(Ce^{-\frac{\psi^2}{x+1}\varepsilon})+A(Ce^{-\frac{\psi^2}{x+1}\varepsilon})+L(\pm\phi)+A(\pm\phi)>0,
 \eeno
 due to $A\geq0.$  However, by the property of negative minimum point, at $(x_0,\psi_0),$
 \beno
 L\big(Ce^{-\frac{\psi^2}{x+1}\varepsilon}\pm\phi\big)+A\big(Ce^{-\frac{\psi^2}{x+1}\varepsilon}\pm\phi\big)\le 0,
 \eeno
  which is a contradiction.
  \end{proof}

  \begin{lemma}\label{lem:alg}
 There exists a positive constant $C$ and a small positive constant $\lambda$ such that $$|\phi(x,\psi)|<C(x+1)^{-\lambda}\quad \text{in}\quad \R_+\times \R_+.$$

  \end{lemma}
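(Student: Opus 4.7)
The plan is to upgrade the pointwise Gaussian bound from Lemma \ref{lem:exp} to one that also carries the algebraic decay factor $(x+1)^{-\lambda}$, by constructing an explicit barrier of similarity form
$$\Phi(x,\psi)=K(x+1)^{-\lambda}e^{-\alpha\frac{\psi^2}{x+1}}$$
and comparing $\Phi$ to $\pm\phi$ via the maximum principle applied to the operator $L+A$. The algebraic decay is powered by the nonnegative damping coefficient $A$, which by Lemma \ref{lem:A} satisfies $A(x+1)\geq\lambda_{k_0}$ on any strip $\{\zeta\leq k_0\}$; the Gaussian profile is inserted so that $\Phi$ dominates $|\phi|$ on the parabolic boundary and decays to zero in $\psi$.

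The execution proceeds in four steps. First, a direct computation of $L\Phi+A\Phi$, factoring out $\Phi/(x+1)$, yields the bracket
$$\bigl[-\lambda+2\alpha\sqrt{w}+\tfrac{\alpha\psi^2}{x+1}(1-4\alpha\sqrt{w})+A(x+1)\bigr].$$
Second, choose $\alpha$ so small (using the $L^\infty$ bound on $w$ from Lemma \ref{lem:com}) that $1-4\alpha\sqrt{w}\geq\tfrac12$, discarding the second and third terms' negative contributions. Third, split the half-plane into the two regions $\{\zeta\leq k_0\}$ and $\{\zeta>k_0\}$ for a suitable large $k_0\geq M$: in the inner region use the lower bound $A(x+1)\geq\lambda_{k_0}$ of Lemma \ref{lem:A} and demand $\lambda<\lambda_{k_0}$; in the outer region use \eqref{zeps} to deduce $\psi^2/(x+1)\geq k_0^2/4$, so the similarity Gaussian term alone dominates $\lambda$ provided $k_0$ is taken large enough. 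Either way the bracket is strictly positive and hence $L\Phi+A\Phi\geq0$ throughout $\R_+\times\R_+$.

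For the boundary comparison, fix $\alpha$ slightly smaller than the $\varepsilon$ produced by Lemma \ref{lem:exp}. Then at $\psi=0$ one has $\Phi>0=\phi$; at $x=0$ Lemma \ref{lem:exp} gives $|\phi(0,\psi)|\leq C_0 e^{-\varepsilon\psi^2}\leq K e^{-\alpha\psi^2}=\Phi(0,\psi)$ once $K$ is chosen large; and as $\psi\to\infty$ with $x$ fixed, the ratio $|\phi|/\Phi$ vanishes because $\varepsilon>\alpha$. A standard parabolic argument on the truncated strips $[0,X]\times[0,Y]$ with $X,Y\to\infty$ then forces $\Phi\pm\phi\geq0$: any interior minimum where $\Phi\pm\phi<0$ would give $L(\Phi\pm\phi)+A(\Phi\pm\phi)\leq0$, contradicting $L\Phi+A\Phi>0$ together with $L\phi+A\phi=0$ and $A>0$.

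The main obstacle is the possible degeneration of the damping. The pointwise lower bound $A(x+1)\geq\lambda_{k_0}$ is valid only on a bounded $\zeta$-strip and degrades as $\zeta\to\infty$, while the boost $2\alpha\sqrt w$ is useless near $\psi=0$ since $\sqrt w\to0$ there. The resolution is precisely the two-region split: damping handles the inner region and the Gaussian exponent handles the outer region, and the crucial point is that the transition threshold $k_0$ can be chosen after $\alpha$, ensuring that $\alpha k_0^2/8>\lambda$. With $\lambda<\min(\lambda_{k_0},\alpha k_0^2/8)$ the construction closes, giving the stated algebraic decay.
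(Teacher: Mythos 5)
Your proof is correct, but it takes a different route from the paper. The paper's barrier is the piecewise function $C(x+1)^{-\lambda}\{h^{1/2}M^{1/2}$ for $h\le 1/M$, $1$ for $1/M\le h\le h_0$, $h_0^{2+2\lambda}h^{-2-2\lambda}$ for $h\ge h_0\}$ with two ridges: the damping $A$ (Lemma \ref{lem:A}) is invoked only in the middle region, the inner region is handled by the degenerate diffusion via $\sqrt{w}\gtrsim \psi^{1/2}(x+1)^{-1/4}$, and the far field by the algebraic tail alone, so Lemma \ref{lem:exp} is not needed beyond the initial comparison \eqref{infic}. You instead use a single smooth similarity barrier $K(x+1)^{-\lambda}e^{-\alpha\psi^2/(x+1)}$, exploiting that the lower bound of Lemma \ref{lem:A} holds on the whole strip $\{\zeta\le k_0\}$ including $\zeta$ near $0$ (so no special inner piece or ridge is needed), and that on $\{\zeta>k_0\}$ the Gaussian term $\tfrac{\alpha}{2}\psi^2/(x+1)\ge \alpha k_0^2/8$ dominates $\lambda$ by \eqref{zeps}; the order of choices ($\alpha$ small with $\alpha<\varepsilon$ and $4\alpha\sqrt{w}\le\tfrac12$, then $k_0\ge M$, then $\lambda<\min(\lambda_{k_0},\alpha k_0^2/8)$) is consistent, and your boundary comparison correctly uses Lemma \ref{lem:exp} both at $x=0$ and, through the ratio $|\phi|/\Phi\to0$, to confine the negative set to a compact region so the minimum-point contradiction applies. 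What each approach buys: the paper's barrier also records the $h^{1/2}$ vanishing of $\phi$ near $\psi=0$ (a forerunner of Lemma \ref{phih}), while yours is structurally simpler (no ridges, one formula) and directly packages algebraic-in-$x$ with Gaussian-in-$h$ decay, at the mild cost of making the lemma depend on Lemma \ref{lem:exp}, which is harmless since that lemma precedes this one.
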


  \begin{proof}

Let
\beno
 g(x,\psi)=C(x+1)^{-\lambda}\left\{
\begin{aligned}
h^{\frac{1}{2}}M^{\frac{1}{2}},\quad&h\leq \frac{1}{M},\\
1,\quad&\quad\frac{1}{M}\le h\le h_0,\\
\frac{1}{h^{2+2\lambda}}h_0^{2+2\lambda},\quad&h\geq h_0,
\end{aligned}
\right.
\eeno
where $\lambda\in(0,1)$ is a positive constant to be determined.

First of all, it holds that
\beno
(g\pm\phi)(x,0)= 0,\quad g\pm\phi\rightarrow 0\quad \text{as}\,\,h\rightarrow +\infty,
\eeno
and by \eqref{infic} and \eqref{wh},
\beno
g\pm\phi\geq 0\quad \text{on}\,\,\,x=0
\eeno
by taking $C$ large.

Now we claim that $g\pm\phi\geq 0.$ Otherwise, by the initial and boundary conditions, a negative minimum is obtained at some point $(x_0,\psi_0)\in(0,x_0]\times (0,+\infty)$ with $(g\pm\phi)(x_0,\psi_0)<0$. In the following, we work in the domain $(0,x_0]\times (0,+\infty).$


In $\{h> h_0\},$ by Lemma \ref{lem:com}, we have
\beno
L\frac{(x+1)^{-\lambda+1+\lambda}}{\psi^{2+2\lambda}}
=\frac{1}{\psi^{2+2\lambda}}\Big(1-\sqrt{w}(2+2\lambda)(3+2\lambda)\frac{x+1}{\psi^2})\Big)>0
\eeno
 by taking $h_0$ large independent of $\lambda\in(0,1)$. Hence, the minimum cannot be achieved in $\{h> h_0\}.$

By Lemma \ref{lem:A}, there exists a positive constant $\lambda_0$ such that
\begin{align}\nonumber
A>\frac{\lambda_0}{x+1}\quad\text{for}\,\,h\leq h_0.
\end{align}
Now we take $\lambda =\lambda_0.$ In $\{\frac{1}{M}< h<h_0\},$
\begin{align}\nonumber
    L(x+1)^{-\lambda}+A(x+1)^{-\lambda}\geq-\lambda(x+1)^{-\lambda-1}+A(x+1)^{-\lambda}>0.
\end{align} Hence, the minimum cannot be achieved in $\{\frac{1}{M}< h< h_0\}.$

In $\{h<\frac{1}{M}\},$ by Lemma \ref{lem:com} and \eqref{psify},
\begin{align}\nonumber
 L\big[(x+1)^{-\lambda-\frac{1}{4}}\psi^{\frac{1}{2}}\big]=(x+1)^{-\lambda-\frac{1}{4}-1}\psi^{\frac{1}{2}}
\Big(-\lambda-\frac{1}{4}+\frac{1}{4}\sqrt{w}\frac{x+1}{\psi^2}\Big)>0
\end{align}
by taking $M$ large. Indeed, by Lemma \ref{lem:com}, it holds
$$\sqrt{w}(x,\psi)\geq c \sqrt{\bar{w}}(x,\psi),$$
which along with \eqref{wh} gives
$$\sqrt{w}(x,\psi)\geq c\frac{\psi^{\frac{1}{2}}}{(x+1)^{\frac{1}{4}}},\quad h\leq 1.$$
Therefore, for $M$ large, the minimum cannot be achieved in $\{h<\frac{1}{M}\}.$

On the other hand, the minimum cannot be achieved at lines $h=\frac{1}{M}$ and $h=h_0$ since they are ridges with respect to $h$ for any fixed $x$. In summary, there is no negative minimum point $(x_0,\psi_0)$ in the interior.
\end{proof}

\begin{remark} Serrin \cite{Serrin} constructed barrier functions with ridges and for readers' convenience, here is a brief description. If $\varphi(s)\in C^1((a,c)\cup(c,b))\cap C((a,b))$ and $\varphi'_{-}(c)>\varphi'_{+}(c)$, then we call $``x=c"$ a ridge. The following figures are four examples.
\begin{figure}[h]
\centering
\includegraphics{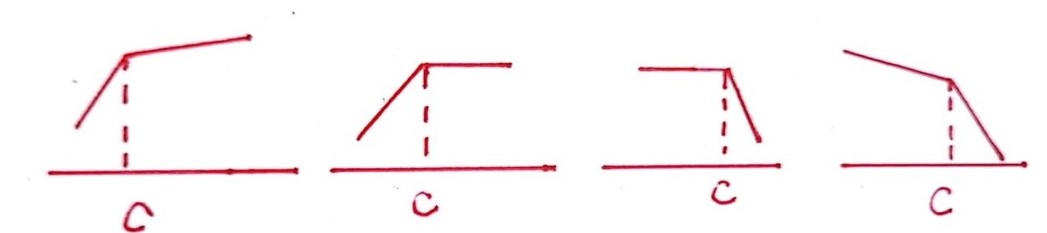}
\caption{Ridges}
\end{figure}
\end{remark}

\subsection{Decay estimates under Von Mises coordinates}\label{0udk1}

\begin{proposition}\label{51}
For any fixed $\alpha\in(0,1),$ there exist positive constants $C_B$, $B$ and $N$ large and a small positive constant $\lambda>0$ such that
\beno
|\phi(x,\psi)|\leq g(x,\psi)e^{-B(x+1)^{-\frac{\lambda}{2}}} \quad\text{in}\quad \R_+\times \R_+,
\eeno
 where
 \beno
g(x,\psi)=C_B\left\{\begin{aligned}N^{1-\alpha}(x+1)^{-\frac{1}{2}-\frac{1-\alpha}{2}}\psi^{1-\alpha},&\quad h<\frac{1}{N},\\
\frac{1}{b_0}\partial_\psi \bar{w},&\quad h\geq \frac{1}{N},\end{aligned}\right.
\eeno
 with $b_0=2f''(\zeta_0)$ and  $f(\zeta_0)=\frac{1}{N}.$
 \end{proposition}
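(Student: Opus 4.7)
The plan is to apply the maximum principle to the composite barrier $\Phi = g\,e^{-B(x+1)^{-\lambda/2}}$, where $\lambda>0$ is the exponent furnished by Lemma~\ref{lem:alg}. First I verify the boundary/initial comparison for $\Phi \pm \phi$: both sides vanish at $\psi=0$ and tend to $0$ as $\psi\to+\infty$, while at $x=0$ the estimate \eqref{infic} gives $|\phi(0,\psi)|\leq C\partial_\psi\bar w(0,\psi)$, which matches the outer piece of $g$ as soon as $C_B/b_0\geq Ce^{B}$; for the inner piece ($h<1/N$) I use $\phi(0,\psi)=O(\psi)$ near $\psi=0$ together with $\psi\leq 1/N$ there, then enlarge $C_B$ if necessary. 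Assuming for contradiction that $\Phi\pm\phi$ attains a negative minimum, this minimum must lie in the interior $(0,\infty)\times(0,\infty)$.

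The interface $h=1/N$ is a genuine ridge. The inner piece $g_L=C_B N^{1-\alpha}(x+1)^{-(2-\alpha)/2}\psi^{1-\alpha}$ is strictly increasing in $\psi$, while the outer piece $g_R=(C_B/b_0)\partial_\psi\bar w$ is strictly decreasing in $\psi$ because $\partial_\psi^2\bar w<0$. Hence $\partial_\psi^{-}g>0>\partial_\psi^{+}g$ along $h=1/N$, which rules out a negative minimum of the function $\Phi\pm\phi$ (which is $C^1$ in $\psi$ off the ridge) by the one-sided derivative test described in the remark after Lemma~\ref{lem:alg}. The minimum must therefore sit in one of the two smooth regions $\{h<1/N\}$ or $\{h>1/N\}$, where the parabolic inequality $L\Phi+A\Phi\leq 0$ must hold at the minimum point.

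Writing $L(g\mu)+Ag\mu=\mu(Lg+Ag)+g\mu'$ with $\mu'/\mu=\tfrac{B\lambda}{2}(x+1)^{-\lambda/2-1}>0$, the task reduces to showing $Lg+Ag+g\mu'/\mu>0$ in each region. Inside $\{h<1/N\}$, direct differentiation gives $Lg_L=-\tfrac{2-\alpha}{2}(x+1)^{-1}g_L+(1-\alpha)\alpha\,\sqrt{w}\,\psi^{-2}g_L$; combined with $\sqrt{w}\geq c\sqrt{h}$ from Lemma~\ref{lem:com} and \eqref{wh}, so that $\sqrt w\,\psi^{-2}\geq cN^{3/2}(x+1)^{-1}$, choosing $N$ large forces $Lg_L+Ag_L\geq cN^{3/2}(x+1)^{-1}g_L$, which trivially beats the lower bound. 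Inside $\{h>1/N\}$, the decisive observation is that $\partial_\psi\bar w$ is a zero mode of the operator linearized at $\bar w$: differentiating $\partial_x\bar w=\sqrt{\bar w}\partial_\psi^2\bar w$ in $\psi$ and subtracting $A\,\partial_\psi\bar w$ leaves, after cancellation of two contributions $\partial_\psi\bar w\,\partial_\psi^2\bar w/(2\sqrt{\bar w})$ of opposite sign,
\begin{equation*}
L(\partial_\psi\bar w)+A\,\partial_\psi\bar w=(\sqrt w-\sqrt{\bar w})\Big[\frac{\partial_\psi\bar w\,\partial_\psi^2\bar w}{2\sqrt{\bar w}(\sqrt{\bar w}+\sqrt w)}-\partial_\psi^3\bar w\Big].
\end{equation*}
Estimating $|\sqrt w-\sqrt{\bar w}|\leq |\phi|/(\sqrt w+\sqrt{\bar w})$ through Lemma~\ref{lem:alg} and bounding the bracket using \eqref{BE}--\eqref{blainf} yields $|Lg_R+Ag_R|\leq C(x+1)^{-1-\lambda}g_R$. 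Because $\mu'/\mu\propto (x+1)^{-1-\lambda/2}$ decays strictly slower than $(x+1)^{-1-\lambda}$, taking $B$ large makes $Lg_R+Ag_R+g_R\mu'/\mu>0$, completing the contradiction.

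The principal technical obstacle is the outer estimate: the bracketed factor must be controlled \emph{uniformly in} $\zeta\in[0,\infty)$, which forces a careful use of the Blasius asymptotics $f''\sim e^{-C_1\zeta^2-C_2\zeta}$ in \eqref{blainf}, the identity $f'''=-\tfrac12 ff''$, and the induced decay of $\partial_\psi^3\bar w=2(f^{(4)}f'-f''f''')/((x+1)^{3/2}(f')^3)$. Any polynomial growth in $\zeta$ coming from these higher derivatives of $\bar w$ has to be reabsorbed into the exponential decay of $\partial_\psi\bar w=(b_0/C_B)g_R$ itself, and this is the only non-routine Blasius computation in the proof; the remaining ingredients---boundary verification, ridge test, and the $(x+1)^{-\lambda}$ bootstrap from Lemma~\ref{lem:alg}---are mechanical.
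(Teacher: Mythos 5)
Your overall strategy is the same as the paper's: construct the barrier $\Phi=g\,e^{-B(x+1)^{-\lambda/2}}$, verify the comparison on the parabolic boundary via \eqref{infic} and \eqref{wh}, rule out a negative interior minimum of $\Phi\pm\phi$ by showing $L\Phi+A\Phi>0$ in each smooth region, and dispose of the interface $h=1/N$ by the ridge observation. Your inner-region computation is essentially identical to the paper's, including the crucial remark that $N$ is chosen large independently of $B$. Where you diverge is the outer region: instead of the paper's route of converting $\partial_{\tilde x}g$ to Euler coordinates via \eqref{eq:EV} and the explicit formulas for $\int_0^y\bar u_x\,dy'$, you differentiate $\partial_x\bar w=\sqrt{\bar w}\,\partial_\psi^2\bar w$ once in $\psi$ and combine with $A=-\partial_\psi^2\bar w/(\sqrt{\bar w}+\sqrt w)$ to get the clean identity
\begin{equation*}
L(\partial_\psi\bar w)+A\,\partial_\psi\bar w=(\sqrt w-\sqrt{\bar w})\Big[\frac{\partial_\psi\bar w\,\partial_\psi^2\bar w}{2\sqrt{\bar w}(\sqrt{\bar w}+\sqrt w)}-\partial_\psi^3\bar w\Big].
\end{equation*}
I checked this, and it is correct; it packages the paper's somewhat longer cancellation into one line, which is a genuine (if modest) simplification.

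There is, however, a real gap in your outer estimate, and it is precisely where the paper has to work hardest. After dividing the right-hand side by $g_R\propto\partial_\psi\bar w\sim f''/\sqrt{x+1}$, the dominant piece of the bracket is $\partial_\psi^3\bar w/g_R=O\big((1+\zeta^2)/(x+1)\big)$: the $f''$ factors cancel, so the quadratic $\zeta^2$ growth coming from $f^2/(f')^2$ \emph{cannot} be "reabsorbed into the exponential decay of $\partial_\psi\bar w$" as you claim in your last paragraph. What controls it is the exponential decay of $\phi$ in $\zeta$ from Lemma~\ref{lem:exp} (not Lemma~\ref{lem:alg}), combined with the algebraic decay in $x$. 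The paper therefore splits $\{h>1/N\}$ into $\zeta>(x+1)^{\lambda/16}$ (where $e^{-c\zeta^2}\zeta^2\lesssim(x+1)^{-\lambda}$) and $\zeta\le(x+1)^{\lambda/16}$ (where $(x+1)^{-\lambda}\zeta^2\le(x+1)^{-7\lambda/8}$), obtaining $|Lg_R+Ag_R|\le C(x+1)^{-1-\tfrac{7\lambda}{8}}g_R$. Your stated bound $C(x+1)^{-1-\lambda}g_R$ is not achievable by this argument; you can only get $C(x+1)^{-1-\lambda'}g_R$ for $\lambda'<\lambda$. This does not break the proof, since any $\lambda'>\lambda/2$ beats $\mu'/\mu\propto(x+1)^{-1-\lambda/2}$ for $B$ large, but as written your estimate is too strong, you invoke the wrong lemma, and you omit the case split that actually carries the argument. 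Fix those three points and the proof closes.
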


\begin{remark}
At $h=\frac{1}{N}$, $\partial_\psi \bar{w}(x,\psi)= \frac{2}{\sqrt{x+1}}f''(\zeta_0).$
Hence,  $g$ is continuous at $h=\frac{1}{N}$.
\end{remark}

\begin{proof}
Note $e^{-B}\leq e^{-B(x+1)^{-\frac{\lambda}{2}}}\leq 1.$ Take $C_{B}\in(e^B,+\infty)$ to be a large constant such that by \eqref{infic}, \eqref{nzic} and \eqref{wh}, $$(ge^{-B(x+1)^{-\frac{\lambda}{2}}}\pm\phi)(0,\psi) \geq 0.$$ On the other hand, by \eqref{IBCW}, \eqref{psiwyb} and \eqref{wh}, we have
\begin{align*}
&(ge^{-B(x+1)^{-\frac{\lambda}{2}}}\pm\phi)(x,0) =0,\\
&ge^{-B(x+1)^{-\frac{\lambda}{2}}}\pm\phi\rightarrow0\quad \text{as}\,\,h\rightarrow\infty.
\end{align*}

We claim that $ge^{-B(x+1)^{-\frac{\lambda}{2}}}\pm\phi\geq 0.$ Otherwise, by the initial and boundary conditions above, a negative minimum is obtained at some point $(x_0,\psi_0)\in(0,x_0]\times (0,+\infty)$ with $(ge^{-B(x+1)^{-\frac{\lambda}{2}}}\pm\phi)(x_0,\psi_0)<0$. In the following, we work in the domain $(0,x_0]\times (0,+\infty).$


By Lemma \ref{lem:com} and \eqref{wh}, we have
\begin{align}\label{sN}
 C\geq \sqrt{w}\geq c\frac{\psi^{\frac{1}{2}}}{(x+1)^{\frac{1}{4}}}, \quad h\leq \frac{1}{2}.
 \end{align}

By \eqref{sN}, in $\{h<\frac{1}{N}\},$ we have
\begin{align}\nonumber
Lg=C_{B}N^{1-\alpha}(x+1)^{-\frac{1}{2}-\frac{1-\alpha}{2}-1}\psi^{1-\alpha}
\Big(-\frac{1}{2}-\frac{1-\alpha}{2}+(1-\alpha)\alpha \sqrt{w}\frac{x+1}{\psi^2}\Big)>0,
\end{align}
by taking $N$ large. \textbf{\it{Note $N$ is independent of choice of $B.$}} Then in $\{h<\frac{1}{N}\},$
\beno
L(ge^{-B(x+1)^{-\frac{\lambda}{2}}})=(Lg)e^{-B(x+1)^{-\frac{\lambda}{2}}}+
ge^{-B(x+1)^{-\frac{\lambda}{2}}}B\frac{\lambda}{2}(x+1)^{-\frac{\lambda}{2}-1}>0.
\eeno
Hence, the minimum point $(x_0,\psi_0)$ cannot be in $\{h<\frac{1}{N}\}.$

Consider the case of $\{h>\frac{1}{N}\}.$ We first derive the equation for $Lg+Ag.$
 Since $g=\frac{C_B}{b_0}\partial_\psi \bar{w}$ for  $h>\frac{1}{N},$ a straight calculation gives
\begin{align*}
    \partial_xg-\sqrt{\bar{w}}\partial_\psi^2g-\frac{\partial_\psi^2\bar{w}}{2\sqrt{\bar{w}}}g=0.
\end{align*}
For the third term, we have
\begin{align*}
   -\frac{\partial_\psi^2\bar{w}}{2\sqrt{\bar{w}}}&=-2\frac{\bar{u}_{yy}|_{(x,y(\psi;\bar{u}))}}{\bar{u}|_{(x,y(\psi;\bar{u}))}(\bar{u}|_{(x,y(\psi;\bar{u}))}+u|_{(x,y(\psi;u))})}
\frac{\bar{u}|_{(x,y(\psi;\bar{u}))}(\bar{u}|_{(x,y(\psi;\bar{u}))}+u|_{(x,y(\psi;u))})}{2\bar{u}^2|_{(x,y(\psi;\bar{u}))}}
\\&=A\Big(1+\frac{\phi}{2\sqrt{\bar{w}}(\sqrt{w}+\sqrt{\bar{w}})}\Big).
\end{align*}
Hence, we obtain
\begin{align}\label{LgAg}
\begin{split}&\partial_xg-\sqrt{w}\partial_\psi^2g+Ag
\\&=\frac{\sqrt{w}-\sqrt{\bar{w}}}{\sqrt{\bar{w}}}\Big(-A\frac{\phi}{
2\sqrt{\bar{w}}(\sqrt{w}+\sqrt{\bar{w}})}g-Ag-\partial_xg\Big)-A\frac{\phi}{2\sqrt{\bar{w}}(\sqrt{w}+\sqrt{\bar{w}})}g.
\end{split}
\end{align}
Now we use the properties of $f(\zeta)$ to estimate $\partial_x g.$
By \eqref{eq:EV},  we have
\begin{align*}
\partial_{\tilde{x}}g|_{(x,\psi)}=& \frac{2C_{B}}{b_0}\Big[\partial_x(\frac{1}{\sqrt{x+1}}f'')+\frac{1}{\sqrt{x+1}}\frac{ -\int_0^{y}\bar{u}_xdy'}{\bar{u}}\partial_{y}f''\Big]|_{\zeta=\frac{y(\psi;\bar{u})}{\sqrt{x+1}}}\\
=& \frac{2C_{B} }{b_0}\Big[-\frac{1}{2}\frac{1}{(\sqrt{x+1})^{3}}f''-\frac{1}{2}\frac{y}{(\sqrt{x+1})^{4}}f'''
\\&+\frac{1}{(\sqrt{x+1})^2}\frac{ -\int_0^{y}-\frac{1}{2}\frac{\bar{y}}{(\sqrt{x+1})^{3}}f''d\bar{y}}{\bar{u}}f'''\Big]|_{\zeta=\frac{y(\psi;\bar{u})}{\sqrt{x+1}}}.
\end{align*}
 For the last term,
\begin{align*}
    \int_0^{y}\frac{\bar{y}}{(\sqrt{x+1})^{3}}f''d\bar{y}
=\frac{1}{\sqrt{x+1}} \int_0^{\zeta}\zeta f''d\zeta=\frac{1}{\sqrt{x+1}}( \zeta f'(\zeta)-f(\zeta)).
\end{align*}
Hence, by $f'''=-\frac{1}{2}ff'',$
\begin{align*}
\partial_{\tilde{x}}g=-\frac{1}{2(x+1)}g+\frac{1}{4}\frac{1}{x+1}\zeta f(\zeta) g
-\frac{1}{4}\frac{1}{x+1}\frac{\zeta f'(\zeta)-f(\zeta)}{\bar{u}}f (\zeta) g|_{\zeta=\frac{y(\psi;\bar{u})}{\sqrt{x+1}}},
\end{align*}
which along with \eqref{LgAg} gives
\begin{align*}
&\partial_xg-\sqrt{w}\partial_\psi^2g+Ag
\\&=\frac{\sqrt{w}-\sqrt{\bar{w}}}{\sqrt{\bar{w}}}\Big(-A\frac{\sqrt{w}-\sqrt{\bar{w}}}{2\sqrt{\bar{w}}}g-Ag\Big)-A\frac{\sqrt{w}-\sqrt{\bar{w}}}{2\sqrt{\bar{w}}}g
\\&\quad-\frac{\sqrt{w}-\sqrt{\bar{w}}}{\sqrt{\bar{w}}}\Big(-\frac{1}{2(x+1)}g+\frac{1}{4}\frac{1}{x+1}\zeta f(\zeta) g
-\frac{1}{4}\frac{1}{x+1}\frac{\zeta f'(\zeta)-f(\zeta)}{\bar{u}}f(\zeta) g\Big)|_{(x,y)=(x,y(\psi;\bar{u}))}.
\end{align*}

Now we estimate $Lg+Ag.$ Thanks to $\sqrt{\bar{w}}\geq c_0$ in $\{h>\frac{1}{N}\}$ for some positive constant $c_0$ depending on $N$,
we infer that
\beno
|Lg+Ag|\le C|\sqrt{w}-\sqrt{\bar{w}}|\frac{1}{x+1}g(1+\zeta^2+\zeta)|_{(x,y)=(x,y(\psi;\bar{u}))}.
\eeno
Since $|\sqrt{w}-\sqrt{\bar{w}}|<C(x+1)^{-\lambda} $ and $|\sqrt{w}-\sqrt{\bar{w}}|<Ce^{-\zeta^2c}$ by Lemma \ref{lem:exp} and Lemma \ref{lem:alg},  we estimate $Lg+Ag$ in two different regions.

If $\zeta>(x+1)^{\frac{\lambda}{16}},$ we have
$$|Lg+Ag|\le Ce^{-\zeta^2c}\frac{1}{x+1}g(1+\zeta^2+\zeta)
\le C\frac{1}{x+1}g\zeta^{-16}\leq g C(x+1)^{-\lambda-1}.$$
If $\zeta\leq(x+1)^{\frac{\lambda}{16}},$ we have
$$|Lg+Ag|\le C(x+1)^{-\lambda}\frac{1}{x+1}g(1+(x+1)^{\frac{\lambda}{8}})
\le Cg(x+1)^{-\frac{7}{8}\lambda-1}.$$
Finally, we conclude that
\begin{align*}
&L(ge^{-B(x+1)^{-\frac{\lambda}{2}}})+Age^{-B(x+1)^{-\frac{\lambda}{2}}}\\
&=\big(Lg+Ag\big)e^{-B(x+1)^{-\frac{\lambda}{2}}}+
ge^{-B(x+1)^{-\frac{\lambda}{2}}}B\frac{\lambda}{2}(x+1)^{-\frac{\lambda}{2}-1}
\\ &\ge ge^{-B(x+1)^{-\frac{\lambda}{2}}}(-C(x+1)^{-\frac{7}{8}\lambda-1}+B\frac{\lambda}{2}(x+1)^{-\frac{\lambda}{2}-1})>0
\end{align*}
by taking $B$ large.
Hence, in $\{h>\frac{1}{N}\}$,
\beno
L(ge^{-B(x+1)^{-\frac{\lambda}{2}}}\pm \phi)+A(ge^{-B(x+1)^{-\frac{\lambda}{2}}}\pm \phi)>0.
\eeno
Then the minimum point $(x_0,\psi_0)$ cannot be in $\{h>\frac{1}{N}\}.$

On the other hand, the minimum cannot be achieved at the line $\{h=\frac{1}{N}\}$ by \eqref{psify} and $f'''\leq 0,$  since the graph here is a ridge with respect to $h$ for any fixed $x$. Therefore, there is no negative minimum point $(x_0,\psi_0)$ in the interior.
\end{proof}

\subsection{Proof of Theorem \ref{thm:decay0}}

Let us first prove the following lemma.

\begin{lemma}\label{cor1}
There exist positive constants $c$ and $C$ such that
\beno
\Big|u(x,y)-\bar{u}\Big(x,\int_0^{\int_0^yu(x,y')dy'}\frac{1}{\sqrt{\bar{w}}(x,\psi')}d\psi'\Big)\Big|\leq \frac{C}{\sqrt{x+1}}e^{-c\frac{y^2}{x+1} }\quad \text{in} \quad \R_+\times \R_+.
\eeno
\end{lemma}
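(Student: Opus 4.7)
The aim is to reduce the claim to a bound in Von Mises coordinates and then to invoke Proposition~\ref{51}. Writing $\psi=\psi(x,y)=\int_0^y u(x,y')\,dy'$, we have $u(x,y)=\sqrt{w}(x,\psi)$ by the very definition of $w$; and, by the Blasius Von Mises change of variables, the argument of $\bar u$ on the left-hand side is exactly $y(\psi;\bar u)$, so that $\bar u\bigl(x,y(\psi;\bar u)\bigr)=\sqrt{\bar w}(x,\psi)$. Hence the quantity to estimate is $|\sqrt{w}-\sqrt{\bar w}|(x,\psi)$, and by Lemma~\ref{lem:com}
\beno
\big|\sqrt{w}(x,\psi)-\sqrt{\bar w}(x,\psi)\big|=\frac{|\phi(x,\psi)|}{\sqrt{w}+\sqrt{\bar w}}\le \frac{C\,|\phi(x,\psi)|}{\sqrt{\bar w}(x,\psi)}.
\eeno

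I would then fix some $\alpha\in(0,\tfrac12)$ in Proposition~\ref{51}, use $|\phi|\le g$ (absorbing the harmless factor $e^{-B(x+1)^{-\lambda/2}}\le 1$), and split the analysis according to whether $h=\psi/\sqrt{x+1}$ exceeds $1/N$ or not. In the outer region $\{h\ge 1/N\}$, the associated $\zeta=y(\psi;\bar u)/\sqrt{x+1}$ satisfies $\zeta\ge\zeta_0>0$, and since $f'$ is nondecreasing with $f'(\zeta_0)>0$, one has $\sqrt{\bar w}=f'(\zeta)\ge c>0$; meanwhile $g=\frac{C_B}{b_0}\partial_\psi\bar w=\frac{2C_B}{b_0\sqrt{x+1}}f''(\zeta)$ by \eqref{fu}, and the Blasius asymptotics \eqref{blainf} yield $f''(\zeta)\le Ce^{-c\zeta^2}$. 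Hence
\beno
\frac{|\phi(x,\psi)|}{\sqrt{\bar w}(x,\psi)}\le \frac{C}{\sqrt{x+1}}\frac{f''(\zeta)}{f'(\zeta)}\le \frac{C}{\sqrt{x+1}}e^{-c\zeta^2}.
\eeno

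In the inner region $\{h\le 1/N\}$, the explicit form $g=C_B N^{1-\alpha}(x+1)^{-(2-\alpha)/2}\psi^{1-\alpha}$ combined with the lower bound $\sqrt{\bar w}\ge c\,h^{1/2}=c\,\psi^{1/2}(x+1)^{-1/4}$ from \eqref{wh} gives, after writing $\psi=h\sqrt{x+1}$,
\beno
\frac{|\phi(x,\psi)|}{\sqrt{\bar w}(x,\psi)}\le C\,h^{\,1/2-\alpha}(x+1)^{-1/2}\le \frac{C}{\sqrt{x+1}},
\eeno
where the last step uses $\alpha<\tfrac12$ and $h\le 1/N$. In this regime $\zeta$ is also bounded above by a constant, so $e^{-cy^2/(x+1)}$ is bounded below by a positive constant that can be absorbed into $C$, and hence the inner bound is compatible with the target estimate.

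Finally, I would pass from $\zeta$ back to $y$ using \eqref{yyyorder}: since $\zeta=y(\psi;\bar u)/\sqrt{x+1}\ge c\,y(\psi;u)/\sqrt{x+1}=cy/\sqrt{x+1}$, we obtain $e^{-c\zeta^2}\le e^{-c'y^2/(x+1)}$, and combining the two regions yields the claimed bound. The main delicate point is the inner region: one must choose $\alpha<1/2$ in Proposition~\ref{51} so that the factor $h^{1/2-\alpha}$ remains bounded as $h\to 0^+$; this is precisely what the freedom of $\alpha\in(0,1)$ in that proposition is designed to accommodate. Everything else is a routine matter once one recognizes that the left-hand side is nothing but $|\sqrt{w}-\sqrt{\bar w}|$ evaluated at $(x,\psi(x,y))$.
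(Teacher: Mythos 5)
Your proposal is correct and follows essentially the same route as the paper: reduce the left-hand side to $|\sqrt{w}-\sqrt{\bar w}|(x,\psi)$ via the Von Mises identities, bound it by $|\phi|/\sqrt{\bar w}$, and apply Proposition~\ref{51} together with \eqref{wh}, \eqref{fu}, \eqref{blainf} in the inner region $h$ small and the outer region $h$ large, finishing with the comparison \eqref{yyyorder} to convert $\zeta$-decay into $y$-decay. The only cosmetic difference is that the paper fixes $\alpha=\tfrac14$ (hence the $h^{3/4}$ bound) and splits at $h=1$, while you keep a general $\alpha\in(0,\tfrac12)$ and split at $h=1/N$, which changes nothing essential.
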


\begin{proof} For any fixed $(x_0,y_0)\in  \R_+\times \R_+,$ let $\psi_0=\int_0^{y_0}u(x_0,y')dy'.$ Then we have
\begin{align*}
     u(x_0,y_0)=\sqrt{w}(x_0,\psi_0),\quad \bar{u}\Big(x_0,\int_0^{\int_0^{y_0}u(x_0,y')dy'}\frac{1}{\sqrt{\bar{w}}(x_0,\psi')}d\psi'\Big)=\sqrt{\bar{w}}(x_0,\psi_0).
\end{align*}
Since $cy(\psi;\bar{u})\leq y(\psi;u)\leq Cy(\psi;\bar{u}),$  it suffices to show that
\beno
|\sqrt{w}(x,\psi)-\sqrt{\bar{w}}(x,\psi)|\leq \frac{C}{\sqrt{x+1}}e^{-c\frac{(y(\psi;u))^2}{x+1} }\quad \text{in} \quad \R_+\times \R_+.
\eeno

 By \eqref{psiwyb} and Proposition \ref{51}, we have
\beno
&&|\phi(x,\psi)|\leq C \frac{1}{\sqrt{x+1}}h^{\frac{3}{4}}\quad \text{for}\,\,h<1,\\
&&|\phi(x,\psi)|\leq C \frac{1}{\sqrt{x+1}}e^{-c\zeta^2 }\quad\text{for}\,\, h\geq 1.
\eeno
Note that in $\{h\leq 1\},$ $\sqrt{w}(x,\psi)\sim\sqrt{\bar{w}}(x,\psi)\sim h^{\frac{1}{2}}$ and in $\{h\geq 1\},$ $\sqrt{w}\sim\sqrt{\bar{w}}\ge a_0$ for some positive constant $a_0$. Then we derive our result from $\sqrt{w}-\sqrt{\bar{w}}=\frac{\phi}{\sqrt{w}+\sqrt{\bar{w}}}$.\end{proof}

Now we prove Theorem \ref{thm:decay0}.

\begin{proof}
By Lemma \ref{cor1}, we have
\begin{align}\nonumber
\begin{split}
|u(x,y)-\bar{u}(x,y)|&\leq|u(x,y)-\bar{u}(x,\bar{y})|+|\bar{u}(x,\bar{y})-\bar{u}(x,y)|\\
&\leq \frac{C}{\sqrt{x+1}}e^{-c\frac{y^2}{x+1}}+|\partial_{\bar{y}}\bar{u}(x,\hat{y})||\bar{y}-y|.
\end{split}
\end{align}
where  $\bar{y}=\int_0^{\int_0^yu(x,y')dy'}\frac{1}{\sqrt{\bar{w}}(x,\psi')}d\psi'$ and $\hat{y}$ is between $\bar{y}$ and $y$. Thanks to $\int_0^yu(x,y')dy'=\int_0^{\bar{y}}\bar{u}(x,y')dy',$ we get by Lemma \ref{lem:com} that
$$c\bar{y}\leq y\leq C\bar{y}$$ for some positive constants $c\in(0,1)$ and $C.$ Due to $f'''\leq 0,$ we have
\begin{align*}
    \partial_{y}\bar{u}(x,\hat{y})|\bar{y}-y|\leq\frac{1}{\sqrt{x+1}}f''\Big(\frac{c\bar{y}}{\sqrt{x+1}}\Big)
|\bar{y}-y|.
\end{align*}
Since $0\leq f''(s)\leq Ce^{-cs^2},$ we only need to show that
\begin{align}\label{y-ybar}
|\bar{y}-y|\leq C+C\ln(x+1)+\f {Cy}{\sqrt{x+1}}.
\end{align}

Fix any $x>0.$ Take $\psi_1$ such that $\psi_1=\sqrt{x+1}f(1).$

$$
|y-\bar{y}|\leq \Big|\int_0^{\psi_1} \frac{1}{\sqrt{w}(x,\psi)}-\frac{1}{\sqrt{\bar{w}}(x,\psi)} d\psi\Big|
+\Big|\int_{\psi_1 }^\psi \frac{1}{\sqrt{w}(x,\psi)}-\frac{1}{\sqrt{\bar{w}}(x,\psi)} d\psi\Big|,
$$
where the second integral should be omitted if $\psi\leq \psi_1.$ For the first term, on the one hand,
\begin{align*}
    \int_0^{\psi_1} \frac{1}{\sqrt{w}}-\frac{1}{\sqrt{\bar{w}}} d\psi & \leq
 \int_0^{\psi_1} \frac{1}{\sqrt{w}}-\frac{1}{\sqrt{\bar{w}}+\frac{1}{\sqrt{x+1}}} d\psi\\& \leq \frac{C}{\sqrt{x+1}}\int_0^{\psi_1} \frac{1}{\sqrt{w}(\sqrt{\bar{w}}+\frac{1}{\sqrt{x+1}})}d\psi\\& \leq \frac{C}{\sqrt{x+1}}\int_0^{\psi_1} \frac{1}{\sqrt{\bar{w}}(\sqrt{\bar{w}}+\frac{1}{\sqrt{x+1}})}d\psi
\\&\leq C\int_0^{1} \frac{1}{\bar{u}+\frac{1}{\sqrt{x+1}}}d\zeta\leq C\int_0^{1} \frac{1}{b\zeta+\frac{1}{\sqrt{x+1}}}d\zeta
\\&\leq C+C\ln \sqrt{x+1},
\end{align*}
where we used $f''(\zeta)\geq b>0,\,\zeta\leq 1,$ Lemma \ref{lem:com}, $d\psi=\sqrt{x+1}f'(\zeta)d\zeta$ implying $\frac{d\psi}{\sqrt{x+1}\sqrt{\bar{w}}}=d\zeta$;
on the other hand,
\begin{align*}
    \int_0^{\psi_1} \frac{1}{\sqrt{\bar{w}}}-\frac{1}{\sqrt{w}} d\psi &\leq
 \int_0^{\psi_1} \frac{1}{\sqrt{\bar{w}}}-\frac{1}{\sqrt{w}+\frac{5C}{\sqrt{x+1}}} d\psi\\& \leq \frac{C}{\sqrt{x+1}}\int_0^{\psi_1} \frac{1}{\sqrt{\bar{w}}(\sqrt{w}+\frac{5C}{\sqrt{x+1}})}d\psi
\\&\leq C\int_0^{1} \frac{1}{u+\frac{5C}{\sqrt{x+1}}}d\zeta
\\&\leq C\int_0^{1} \frac{1}{b\zeta-\frac{C}{\sqrt{x+1}}+\frac{5C}{\sqrt{x+1}}}d\zeta
\\&\leq C+C\ln \sqrt{x+1}.
\end{align*}

For $\psi\geq \psi_1,$ since $C\geq \sqrt{w}\geq c \sqrt{\bar{w}}\geq cf'(1)>0$, we get by Lemma \ref{cor1} that
\begin{align*}
\Big|\int_{\psi_1 }^\psi \frac{1}{\sqrt{w}}-\frac{1}{\sqrt{\bar{w}}} d\psi\Big|\leq \frac{C}{\sqrt{x+1}}(\psi-\psi_1)\leq \frac{C}{\sqrt{x+1}}\int_{\psi_1 }^\psi \frac{1}{\sqrt{w}} d\psi\leq\frac{Cy}{\sqrt{x+1}}.
\end{align*}
This shows \eqref{y-ybar}.
\end{proof}

\section{Decay estimates of $\pa_x\phi$ and $\pa_\psi^2\phi$}

In the following sections, we study the Oleinik's solution $u$ with the initial data satisfying the assumptions in Theorem \ref{thm:decay1}.

\subsection{Concavity of $u$}\label{sub1}

\begin{lemma}\label{prop:uyy-upper}
Let $u$ be an Oleinik's solution with $u_0$ satisfying $\partial_{y}^2u_0\leq 0$. Then it holds that
$\partial_{y}^2u\leq 0$ in $\mathbf{R}_+\times \mathbf{R}_+.$
\end{lemma}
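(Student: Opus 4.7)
My plan is to work in $(x,y)$ coordinates and apply the parabolic maximum principle to $a:=\partial_y^2 u$ after an exponential-in-$x$ change of unknown. I first differentiate the Prandtl equation $uu_x+vu_y-u_{yy}=0$ once in $y$, use $v_y=-u_x$ to obtain the auxiliary identity $uu_{xy}+vu_{yy}=u_{yyy}$, then differentiate once more and substitute for $u_{xy}$ to reach, on the set $\{u>0\}$,
\[
u\,a_x+\Bigl(v+\tfrac{u_y}{u}\Bigr)a_y-a_{yy}=\frac{a^2}{u},
\]
where the right-hand side emerges after simplifying $u_xa-u_yu_{xy}$ with the help of the Prandtl equation itself.

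The parabolic boundary data for $a$ are all nonpositive: at $y=0$ the Prandtl equation (with $u=v=0$) forces $a(x,0)=0$; the hypothesis $\partial_y^2 u_0\le 0$ gives $a(0,\cdot)\le 0$; and the exponential decay of $u-1$ for large $y$ (inherited from \eqref{dk0}) gives $a\to 0$ at infinity. The key preparatory estimate I will need is the local boundedness of the quotient $a/u^2$ up to the wall. Restricting the once-differentiated Prandtl equation to $y=0$ also gives $u_{yyy}(x,0)=0$, so Taylor expansion yields $a(x,y)=O(y^2)$ as $y\to 0$, with constant uniform on $[0,X]$ thanks to the $C^\infty$ regularity up to the boundary established in \cite{WZ19}; combined with Oleinik's lower bound $u_y\ge m>0$ near the wall (Property~3 of Section~\ref{Pw}), which gives $u^2\gtrsim y^2$, this provides a constant $C_0=C_0(X)$ with $a\le C_0 u^2$ throughout $[0,X]\times\mathbf{R}_+$.

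With $C>C_0(X)$ fixed and $b:=e^{-Cx}a$, the PDE for $a$ becomes
\[
u\,b_x+\Bigl(v+\tfrac{u_y}{u}\Bigr)b_y-b_{yy}=\frac{b(a-Cu^2)}{u},
\]
whose right-hand side is strictly negative wherever $b>0$ (since $u>0$ and $a-Cu^2<0$ by the choice of $C$). The parabolic boundary data of $b$ remain $\le 0$ and $b\to 0$ as $y\to+\infty$; so if $b$ took a positive value anywhere in $(0,X]\times(0,+\infty)$, it would attain a positive maximum at some interior $(x_0,y_0)$ with $y_0>0$, at which the first/second order conditions $b_x\ge 0,\ b_y=0,\ b_{yy}\le 0$ would force the left-hand side to be $\ge 0$, contradicting the strictly negative right-hand side. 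Hence $b\le 0$, and therefore $a\le 0$, on every strip $[0,X]\times\mathbf{R}_+$; since $X>0$ is arbitrary, the claim follows.

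The main obstacle is establishing the estimate $a\le C_0 u^2$ up to the wall; this hinges on the double vanishing $u_{yy}(x,0)=u_{yyy}(x,0)=0$ (both consequences of evaluating the Prandtl equation and its $y$-derivative at $y=0$), together with the regularity and lower bound results recalled in Section~\ref{Prl}. Away from the wall the argument is a standard exponential-shift parabolic maximum principle.
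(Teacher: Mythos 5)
Your proof is correct, and it takes a genuinely different route from the paper's. The paper passes to Von Mises coordinates, sets $g=\partial_x w=\sqrt{w}\,\partial_\psi^2 w=2\partial_y^2u$, derives $\partial_x g-\tfrac{g^2}{2w}-\sqrt{w}\partial_\psi^2 g=0$, and then runs a weighted $L^2$ energy estimate on $g_+$: it integrates by parts, exploits the sign of $\partial_\psi^2(\sqrt{w})$ and a Hardy-type control of $\int g_+^2/w$ near $\psi=0$ via $(\partial_\psi w)^2/w^{3/2}$, and closes with a Gronwall argument on a time strip $[0,x_1]$ chosen by contradiction so that $g\le\epsilon_0/2$ there. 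You instead stay in Euler coordinates, derive the Riccati-type equation $u\,a_x+(v+u_y/u)a_y-a_{yy}=a^2/u$ for $a=\partial_y^2u$ (this computation, together with $a(x,0)=a_y(x,0)=0$, is correct), and tame the quadratic nonlinearity by the a priori bound $a\le C_0(X)u^2$ before applying a standard exponential-shift maximum principle with $b=e^{-Cx}a$. Your approach is more elementary in flavor (pointwise maximum principle rather than weighted energy), but it relies on two ingredients that should be made explicit: (i) the bound $a\le C_0(X)u^2$ away from the wall needs $u$ bounded below on $[0,X]\times[y_0,\infty)$, which follows from Lemma \ref{lem:com} together with $u_y\ge m$ near the wall but is not automatic from Theorem \ref{thm:olei} alone; (ii) the Taylor estimate $a=O(y^2)$ uniformly on $[0,X]$ uses boundedness of $\partial_y^4u$ up to $y=0$, i.e.\ the $C^\infty$ regularity of \cite{WZ19} — the same ingredient the paper uses for $|\partial_y^3u|\le C$. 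You should also note that the decay $a\to0$ as $y\to\infty$ comes from the Oleinik theory recorded in Section \ref{Pw} (namely $\partial_\psi^2 w\to0$), not from the hypothesis \eqref{dk0}; the lemma as stated does not assume \eqref{dk0}. These are gaps in exposition, not in the mathematics, and your argument is a valid alternative to the paper's energy-method proof.
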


\begin{proof}
Let $$g=\partial_x w =\sqrt{w}\partial^2_{\psi}w.$$
By \eqref{PvM}, we have $g(x,\psi)=\sqrt{w}\partial^2_{\psi}w(x,\psi)=2\partial^2_{y} u(x,y).
$ Thus, we only need to show that
$$g\leq 0\quad\text{in}\quad[0,+\infty)\times \mathbf{R}_+.$$

Otherwise, assume that $\sup_{[0,+\infty)\times \mathbf{R}_+} g>\epsilon_0$ for some $\epsilon_0>0$.  We define
$$
x_1=\inf\Big\{x'\in[0,+\infty)|\exists \psi_{x'}\in \mathbf{R}_+ \,\,\text{so that}\, \,g(x',\psi_{x'})\geq\frac{\epsilon_0}{2}\Big\}.
$$
Due to $\partial_{y}^2u_0\leq 0,$ $g|_{x=0}\leq 0$. Hence, $x_1\in(0,+\infty).$

In the following, we only consider $g$ in $[0,x_1]\times \mathbf{R}_+.$
It is easy to see that
\begin{align}\nonumber
 \partial_x g-\frac{g^2}{2w}-\sqrt{w}\partial^2_{\psi}g=0 .
\end{align}
Using \eqref{PvM}, a straight calculation yields
 $$\sqrt{w}\partial_{\psi}g(x,\psi)=2u\frac{1}{u}\partial_{y}^3u(x,y)=2\partial_{y}^3u(x,y).$$
By $|\partial_{y}^3u|\leq C$ (See \cite{WZ19} for the bound of $|\partial_{y}^3u|$ or see Remark 5.5 in \cite{SWZ}), \eqref{wxinf0} and $g_+|_{\psi=0}= 0$ due to \eqref{owp}, we have
\beno
&&\sqrt{w}g_+\partial_{\psi}g\rightarrow 0,\quad\text{as}\quad\psi\rightarrow0\quad \text{and}\quad \text{as}\quad\psi\rightarrow+\infty,\\
&&\frac{\partial_\psi w }{2\sqrt{w}}g_+^2\rightarrow0\quad\text{as}\,\,\psi\rightarrow0\quad \text{and}\quad \text{as}\quad\psi\rightarrow+\infty.
\eeno
Then we get by integration by parts that
\begin{align}\nonumber
 \int_{\mathbf{R}_+}\sqrt{w}g_+\partial_{\psi}^2gd\psi=
  -\frac{1}{2}\int_{\mathbf{R}_+}\partial_{\psi}(\sqrt{w})\partial_{\psi}(g_+)^2
  d\psi-\int_{\mathbf{R}_+}\sqrt{w}(\partial_{\psi}g_+)^2d\psi
\end{align}
and
\begin{align}\nonumber
-\frac{1}{2}\int_{\mathbf{R}_+}\partial_{\psi}(\sqrt{w})\partial_{\psi}(g_+)^2
d\psi=\frac{1}{2}\int_{\mathbf{R}_+}\partial_{\psi}^2(\sqrt{w})(g_+)^2d\psi.
\end{align}
In $[0,x_1]\times \mathbf{R}_+,$ we have
\begin{align}\nonumber
\partial_{\psi}^2(\sqrt{w})=\frac{g}{2w}-\frac{1}{4}\frac{(\partial_\psi w )^2}{w^{\frac{3}{2 }}},\quad \frac{g_+^2}{2w}g\leq \frac{\epsilon_0}{4}\frac{(g_+)^2}{w}.
\end{align}
Then we have
\begin{align}\label{finaint}\begin{split}
 \frac{1}{2}\frac{d}{dx}\int_{\mathbf{R}_+}(g_+)^2d\psi+\frac{1}{8}\int_{\mathbf{R}_+}\frac{(\partial_\psi w )^2}{w^{\frac{3}{2 }}}(g_+)^2d\psi+\int_{\mathbf{R}_+}\sqrt{w}(\partial_\psi g_+)^2d\psi\leq C\int_{\mathbf{R}_+}\frac{(g_+)^2}{w}d\psi,\end{split}
\end{align} where $C$ depends on $x_1.$

By \eqref{PvM},  there exist some positive constants $c,\,m$ and $M$ so that
\begin{align}\nonumber
 M>\partial_\psi w (x,\psi)>m\quad x\in[0,x_1],\,\psi\in[0,c].
\end{align}
For  any fixed large $K$, by $w|_{\psi=0}=0$,  there exists a small positive constant $\psi_0<c$ such that for $x\in[0,x_1],\psi\in[0,\psi_0]$,
\begin{align*}
\frac{(\partial_\psi w )^2}{w^{\frac{3}{2 }}}\geq \frac{m^2}{(M)^{\frac{3}{2 }}\psi^{\frac{3}{2 }}}\geq\frac{K}{m\psi}\geq \frac{K}{w}.
\end{align*}
On the other hand, by \eqref{psify} and Lemma \ref{lem:com}, we have
\begin{align}\nonumber
   w\geq c\bar{w}\geq\tilde{c}_{1}>0\quad \text{on}\quad [\psi_0,+\infty).
\end{align}
Then we have
\begin{align*}
&C\int_{0}^{\psi_0}\frac{(g_+)^2}{w}d\psi\leq \frac{1}{8}\int_{\mathbf{R}_+}\frac{(\partial_\psi w )^2}{w^{\frac{3}{2 }}}(g_+)^2d\psi,\\
&C\int_{\psi_0}^{+\infty}\frac{(g_+)^2}{w}d\psi\leq C\int_{\mathbf{R}_+}(g_+)^2d\psi,
\end{align*}
which along with \eqref{finaint} give
\begin{align}
    \frac{1}{2}\frac{d}{dx}\int_{\mathbf{R}_+}(g_+)^2d\psi\leq C_{x_1}\int_{\mathbf{R}_+}(g_+)^2d\psi.
\end{align}
Since $g_+=0$ on $\{x=0\}\times\mathbf{R}_+$, by Gronwall's inequality, we have $g_+=0$ in $[0,x_1]\times\mathbf{R}_+,$ which is a contradiction to the definition of $x_1$, and thus the proof is completed.
\end{proof}

\begin{remark}
The proof is similar to Proposition 5.4 in \cite{SWZ}. However, a key difference is that we do not require the monotonicity. In particular, we do not require $\partial_yu_0\geq0$ in $\R_+$.
\end{remark}

\subsection{Decay estimate of $\phi$ in $\psi$}

To obtain a decay estimate of $\partial_x \phi,$ we first prove a better decay estimate of $\phi$ with respect to $\psi$ near $0,$ but at the expense of decay rate with respect to $x.$

 \begin{lemma}\label{phih}
There exist positive constants $C, M$ and a small positive constant $\alpha$ such that
 \beno
 |\phi(x,\psi)|\leq Cg \quad \text{in}\quad \R_+\times \R_+,
  \eeno
where
\beno
g= C(x+1)^{-\alpha}\left\{\begin{aligned}\frac{1}{b_1}\bar{w},&\quad h<\frac{1}{M},\\ 1,&\quad h\geq \frac{1}{M},
\end{aligned}\right.
\eeno
with  $b_1=f'^2(\zeta_0)$ and $\zeta_0=f^{-1}(\frac{1}{M}).$
  \end{lemma}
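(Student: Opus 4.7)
The plan is to refine the barrier construction of Proposition \ref{51} so that the upper bound for $|\phi|$ vanishes linearly (like $\bar{w}$) rather than like $\psi^{1-\alpha}$ as $\psi\to 0$, at the cost of a weaker decay rate in $x$. As in Proposition \ref{51} this will be a maximum-principle argument for the perturbation equation $L\phi+A\phi=0$, but with a piecewise barrier whose inner ($h<1/M$) and outer ($h\ge 1/M$) parts are matched continuously at $h=1/M$ (since $\bar{w}(x,\psi)=b_1$ there by definition of $b_1=f'(\zeta_0)^2$).

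The first step is to split the domain at the ridge $h=1/M$. In the outer region $\{h\ge 1/M\}$, the bound $|\phi|\le Cg$ is essentially free: Proposition \ref{51} yields $|\phi|\le (C_B/b_0)\partial_\psi\bar{w}=(2C_B/b_0)(x+1)^{-1/2}f''(\zeta)$, and since $f''\le b_0$ globally, this is dominated by $C(x+1)^{-1/2}\le C(x+1)^{-\alpha}$ as soon as $\alpha\le 1/2$. Hence the real task is to establish the bound in the inner region, and along the ridge $h=1/M$ the outer estimate already supplies the comparison $|\phi|\le Cg$ after enlarging $C$.

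For the inner region I would argue by contradiction. Suppose $g\pm\phi$ attains a negative minimum at some interior point $(x_0,\psi_0)$ with $h(x_0,\psi_0)<1/M$. I would first rule out boundary minima: at $\psi=0$ one has $\phi=g=0$; at the ridge $h=1/M$ the outer estimate applies; at $x=0$ a refined expansion is required (see below). At the interior minimum I would compute $Lg+Ag$ directly, using $\partial_x\bar{w}=\sqrt{\bar{w}}\,\partial_\psi^2\bar{w}$ (since $\bar{w}$ satisfies \eqref{starmain}) and the identity $-\partial_\psi^2\bar{w}=ff''/((x+1)f')$ coming from $f'''=-\frac{1}{2}ff''$. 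After a telescoping cancellation one arrives at
\begin{align*}
L g + A g \;=\; \frac{C (x+1)^{-\alpha-1}}{b_1}\left[-\alpha\,\bar{w}+\frac{f f''\, w}{f'(\sqrt{\bar{w}}+\sqrt{w})}\right].
\end{align*}
By Lemma \ref{lem:com} one has $w\sim\bar{w}=f'^2$, so $w/(\sqrt{\bar{w}}+\sqrt{w})\sim f'$ and the bracket is bounded below by $f'^2(c\,ff''/f'^2-\alpha)$ for a positive constant $c$. A Taylor expansion at $\zeta=0$ gives $ff''/f'^2\to 1/2$ (from $f\sim b_0\zeta^2/2$, $f'\sim b_0\zeta$, $f''\to b_0$); continuity together with the strict positivity of $f,f',f''$ on $(0,\zeta_0]$ forces the ratio $ff''/f'^2$ to have a positive infimum on the compact interval $[0,\zeta_0]$. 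Taking $\alpha$ strictly smaller than $c$ times this infimum makes the bracket positive, so $Lg+Ag>0$ at $(x_0,\psi_0)$. This contradicts the fact that at a negative interior minimum $L(g\pm\phi)+A(g\pm\phi)\le 0$, completing the argument.

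The main obstacle I anticipate is the initial-data check at $x=0$ for $\psi$ close to zero, because the bound \eqref{infic} used earlier is too crude there: it controls $|\phi(0,\psi)|$ only by a bounded quantity whereas one needs control by $\bar{w}(0,\psi)$, which vanishes linearly. The remedy is to extract $u_0''(0)=0$ from the structural hypothesis $u_0''(y)=O(y^2)$ in \eqref{OI}, which gives $u_0(y)=u_0'(0)y+O(y^4)$; together with $\psi=u_0'(0)y^2/2+O(y^5)$ this yields $w_0(\psi)=2u_0'(0)\psi+O(\psi^{5/2})$ and similarly $\bar{w}(0,\psi)=2b_0\psi+O(\psi^{5/2})$. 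Hence $|\phi(0,\psi)|\le C\psi\le C'\bar{w}(0,\psi)$ for small $\psi$, which after enlarging $C$ gives $g(0,\psi)\ge|\phi(0,\psi)|$, as needed to start the maximum-principle argument.
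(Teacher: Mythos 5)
Your proposal is correct and follows essentially the same route as the paper: the same piecewise barrier $C(x+1)^{-\alpha}\bar{w}/b_1$ matched at $h=1/M$, the same maximum-principle contradiction, and the same key cancellation $\partial_x\bar{w}-\sqrt{w}\partial_\psi^2\bar{w}+A\bar{w}=Aw$ together with the lower bound on $A$ (your positive infimum of $ff''/f'^2$ on $[0,\zeta_0]$ is exactly the content of Lemma \ref{lem:A}), with only cosmetic differences in which prior estimate handles the outer region (Proposition \ref{51} versus Lemma \ref{lem:alg}). The Taylor expansion at $x=0$ is unnecessary: Lemma \ref{lem:com} already gives $|\phi(0,\psi)|\le C\bar{w}(0,\psi)$ for all $\psi$, which also covers the intermediate range $\psi$ away from $0$ that your small-$\psi$ expansion leaves implicit.
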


\begin{proof}
By Lemma \ref{lem:com}, taking $C$ large, we have $\pm\phi\leq g$ on ${x=0}$ and $g=0=\phi$ on $\psi=0.$ By Lemma \ref{lem:alg},  $|\phi|<C(x+1)^{-\alpha}$ for some small $\al>0$. Hence, there is no negative minimum of $g\pm\phi$ in $\{h\geq\frac{1}{M}\}.$

Thanks to $\partial_x\bar{w}-\sqrt{\bar{w}}\partial^2_{\psi}\bar{w}=0,$ we have
\begin{align*}
    \partial_x\bar{w}-\sqrt{w}\partial^2_{\psi}\bar{w}=-(\sqrt{w}-\sqrt{\bar{w}})
    \partial^2_{\psi}\bar{w}=-\frac{\partial_x\bar{w}}{\sqrt{\bar{w}}(\sqrt{w}+\sqrt{\bar{w}})}\phi=A\phi,
\end{align*}
which gives
\begin{align*}
 \partial_x\bar{w}-\sqrt{w}\partial^2_{\psi}\bar{w}+A\bar{w}=A(\bar{w}+\phi)=
    Aw.
\end{align*}
Then we get
\begin{align*}
    (\partial_x-\sqrt{w}\partial^2_{\psi})\big((x+1)^{-\alpha}\bar{w}C\big)+A(x+1)^{-\alpha}\bar{w}C=
   (x+1)^{-\alpha} AwC-\alpha(x+1)^{-\alpha-1}\bar{w}C.
\end{align*}
By \eqref{lem:A}, there exists a positive constant $\lambda_0$  such that $A\geq \frac{\lambda_0}{x+1}$ for $h\leq 1.$ Therefore, taking $\alpha$ small enough, we obtain
\beno
(\partial_x-\sqrt{w}\partial^2_{\psi})(g\pm\phi)+A(g\pm\phi)>0,\,\,0<h<\frac{1}{M}.
\eeno
 Hence, there is no  negative minimum of $g\pm\phi$ in $\{0<h<\frac{1}{M}\}.$

Since $\partial_\psi \bar{w}>0,\,h<\frac{2}{M},$ there is a ridge of $g$ at $h=\frac{1}{M}$ and thus no negative minimum is achieved at $h=\frac{1}{M}$.
  In summary, $g\pm\phi\geq 0.$
\end{proof}

\subsection{The equation of $\partial_x\phi=\phi^1$ }

Taking one $x$ derivative of \eqref{Main}, we get
\begin{align}\label{prexderM}
&\partial_x \phi^{1}-\sqrt{w}\partial_{\psi}^2 \phi^{1}+A\phi^{1}
-\frac{w_x}{2w}(\phi^{1}+A\phi)+\phi \partial_{x}A=0.
\end{align}
Now we simplify this equation. For this,
we use $\partial_{\tilde{x}}$ to denote the derivative in Von Mises coordinates $(\tilde{x},\psi)$ and $\partial_x$ for derivatives in Euler coordinates $(x,y)$ in case of confusion. By \eqref{Main} and \eqref{eq:EV}, we get
\begin{align*}
\partial_{\tilde{x}}A=&\partial_{\tilde{x}}\Big(-\frac{\partial_{\tilde{x}} \bar{w}}{\sqrt{\bar{w}}}\frac{1}{\sqrt{\bar{w}}+\sqrt{w}}\Big)
=\partial_{\tilde{x}}\Big(-\frac{\partial_{\tilde{x}} \bar{w}}{\sqrt{\bar{w}}}\Big)\frac{1}{\sqrt{\bar{w}}+\sqrt{w}}+\frac{\partial_{\tilde{x}} \bar{w}}{\sqrt{\bar{w}}}\frac{\frac{w_{\tilde{x}}}{2\sqrt{w}}+\frac{\bar{w}_{\tilde{x}}}{2\sqrt{\bar{w}}}}{(\sqrt{w}+\sqrt{\bar{w}})^2}
\\=&\frac{1}{\sqrt{\bar{w}}+\sqrt{w}}\Big[\partial_x\big(-2\frac{\bar{u}_{yy}}{\bar{u}}\big)-\frac{ \int_0^{y}\bar{u}_x(x,y')dy'}{\bar{u}}\partial_{y}\big(-2\frac{\bar{u}_{yy}}{\bar{u}}\big)
\Big]|_{(x,y)=(x,y(\psi;\bar{u}))}\\&+\frac{w_{\tilde{x}}}{2\sqrt{w}\sqrt{\bar{w}}}\frac{\partial_{\tilde{x}} \bar{w}}{(\sqrt{w}+\sqrt{\bar{w}})^2}
+\frac{\bar{w}_{\tilde{x}}}{2\bar{w}}\frac{\partial_{\tilde{x}} \bar{w}}{(\sqrt{w}+\sqrt{\bar{w}})^2}.
\end{align*}
We denote
$$D=\partial_x\big(\frac{\bar{u}_{yy}}{\bar{u}}\big)-\frac{ \int_0^{y}\bar{u}_x(x,y')dy'}{\bar{u}}\partial_{y}\big(\frac{\bar{u}_{yy}}{\bar{u}}\big).
$$
A direct calculation gives
\begin{align*}
\int_0^{y}\bar{u}_x(x,y')dy'=&-\frac{1}{2}\frac{1}{(\sqrt{x+1})^3}\int_0^{y}f''(\frac{\tilde{y}}{\sqrt{x+1}})\tilde{y}d\tilde{y}
=-\frac{1}{2}\frac{1}{\sqrt{x+1}}\int_0^{\zeta}\zeta f''(\zeta)d\zeta\\=&-\frac{1}{2}\frac{1}{\sqrt{x+1}}(f'(\zeta)\zeta-f(\zeta)),
\\ \frac{\bar{u}_{yy}}{\bar{u}}=&\frac{f^{(3)}}{(x+1)f'},
\end{align*}
and then
\begin{align}\label{xtildtox}
 \frac{  -\int_0^{y}\bar{u}_xdy'}{\bar{u}}=\frac{1}{2}\frac{1}{\sqrt{x+1}}\big(\zeta-\frac{f}{f'}\big).
\end{align}
Thus, we have
\begin{align*}
D=&\frac{1}{x+1}\Big(\frac{f^{(4)}}{f'}-\frac{f''f^{(3)}}{(f')^2}\Big)\Big(-\frac{y}{2(\sqrt{x+1})^{3}}\Big)
-\frac{f^{(3)}}{(x+1)^2f'}
\\&+\frac{1}{2}\frac{1}{(x+1)^2}\Big(\zeta-\frac{f}{f'}\Big)\Big(\frac{f^{(4)}}{f'}-\frac{f''f^{(3)}}{(f')^2}\Big)
\\=&\frac{1}{(x+1)^2}\Big(-\frac{1}{2}\frac{f}{f'}\Big)\Big(\frac{f^{(4)}}{f'}-\frac{f''f^{(3)}}{(f')^2}\Big)-\frac{f^{(3)}}{(x+1)^2f'}.
\end{align*}
Further, by \eqref{star}, we have
\begin{align*}
    \frac{f^{(4)}}{f'}-\frac{f''f^{(3)}}{(f')^2}=-\frac{1}{f'}(\frac{1}{2}f'f''-\frac{1}{4}f^2f'')
+\frac{1}{2}\frac{(f'')^2f}{(f')^2}.\end{align*}
This shows that
\begin{align*}
    D=&\frac{1}{(x+1)^2}\Big[\big(-\frac{1}{2}\frac{f}{f'}\big)\Big(-\frac{1}{f'}\big(\frac{1}{2}f'f''-\frac{1}{4}f^2f''\big)
+\frac{1}{2}\frac{(f'')^2f}{(f')^2}\Big)+\frac{1}{2}\frac{ff''}{f'}\Big].
\end{align*}
By \eqref{BE} and \eqref{blainf}, for any fixed large $L,$
\begin{align}\label{Dzeta}
   D=\frac{1}{(x+1)^2}O(\zeta)\,\,\text{for}\,\,\zeta\leq L,\quad D=\frac{1}{(x+1)^2}O(f''\zeta^3)\,\,\,\text{for}\,\,\zeta> L,
\end{align}
and thus,
\begin{align}\label{Dzeta}
   \frac{D}{f'}=\frac{1}{(x+1)^2}O(1)\,\,\text{for}\,\,\zeta\leq L,\quad \frac{D}{f'}=\frac{1}{(x+1)^2}O(f''\zeta^3)\,\,\text{for}\,\,\zeta>L.
\end{align}
By Proposition \ref{51}, we get
\begin{align}\label{phi12}
|\phi|\leq  C(x+1)^{-\frac{1}{2}}f''(\zeta)|_{\zeta=\frac{y(\psi;\bar{u})}{\sqrt{x+1}}}\quad\text{for}\,\, h\geq \frac{1}{M},
\end{align}
and by Lemma \ref{phih},  for a small positive constant $\alpha,$
\begin{align}\label{phi122}&|\phi|=\frac{\bar{w}}{(x+1)^\alpha}O(1)\quad \text{for}\,\,h<\frac{1}{M},\\
&\frac{\phi}{(\sqrt{w}+\sqrt{\bar{w}})^2}=\frac{1}{(x+1)^\alpha}O(1)\quad\text{for}\,\,h<\frac{1}{M}.
\end{align}
By Lemma \ref{lem:com}, we have
\begin{align}
A\phi\sim-\partial_x \bar{w}\frac{\phi}{(\sqrt{w}+\sqrt{\bar{w}})^2}.\label{eq:Aphi}
\end{align}

Summing up, we conclude that
\begin{align}\nonumber
\begin{split}
&\partial_x \phi^{1}-\sqrt{w}\partial_{\psi}^2 \phi^{1}+A\phi^{1}
-\frac{w_x}{2w}\phi^{1}=\frac{w_x}{w}O(A\phi)+O(A^2\phi) +\phi\frac{D}{f'}O(1).
\end{split}
\end{align}
By \eqref{phi12}-\eqref{eq:Aphi}, we have
\begin{align*}
|A\phi|\leq&C\frac{-\partial_x \bar{w}}{(x+1)^\alpha}\leq C\frac{1}{(x+1)^{1+\alpha}}h\quad\text{for}\,\,h<\frac{1}{M},\\
|A\phi|\leq& C(x+1)^{-\frac{3}{2}}f(f'')^2\leq C(x+1)^{-\frac{3}{2}}\zeta(f'')^2\quad\text{for}\,\,h\geq \frac{1}{M},
\end{align*}
where we also used $-\partial_x \bar{w}=\frac{ff''}{x+1},$ $h=f(\zeta)$.  By \eqref{Dzeta}, \eqref{phi12} and \eqref{phi122},
we get
  \begin{align*}
   &\phi\frac{D}{f'}=\frac{\phi}{f'^2}\frac{D}{f'}(f')^2=\frac{1}{(x+1)^{2+\alpha}}O(\zeta^2)\quad\text{for}\,\,\zeta\leq L,\\
   &\phi\frac{D}{f'}=(x+1)^{-\frac{5}{2}}O((f'')^2\zeta^3)\quad\text{for}\,\,\zeta> L.
\end{align*}
Finally, we arrive at
\begin{align}\label{eq:phi-1}
\begin{split}
&\partial_x \phi^{1}-\sqrt{w}\partial_{\psi}^2 \phi^{1}+A\phi^{1}
-\frac{w_x}{2w}\phi^{1}=\frac{w_x}{w}(D_4)+D_5,
\end{split}
\end{align}
where
\beno
&&D_4=\frac{1}{(x+1)^{1+\alpha}}O(\zeta^2)\quad\text{for}\,\,\zeta\leq L,\quad D_4= O(\zeta(f'')^2)(x+1)^{-\frac{3}{2}}\quad \text{for}\,\,\zeta> L,\\
&& D_5=\frac{1}{(x+1)^{2+\alpha}}O(\zeta^2)\quad \text{for}\,\,\zeta\leq L,\quad D_5=(x+1)^{-\frac{5}{2}}O((f'')^2\zeta^3)\quad\text{for}\,\,\zeta>L.
\eeno

\subsection{The equation of $\partial_x\bar{w}$}
Since $-\partial_x\bar{w}$ is useful in constructing barrier functions, here we derive an equation for $\partial_x\bar{w}$.
Taking one $x$ derivative to $\partial_x \bar{w}=\sqrt{\bar{w}}\partial_{\psi}^2 \bar{w}=2\partial_{y}^2\bar{u},$ we obtain
\begin{align}
   \partial_x (\partial_x\bar{w})-\sqrt{\bar{w}}\partial_{\psi}^2(\partial_x\bar{w})=
   \frac{(\partial_x\bar{w})^2}{2\bar{w}}.
\end{align}
Thanks to
\begin{align*}
    -\frac{\partial_x\bar{w}}{2\bar{w}}=A\frac{\sqrt{w}+\sqrt{\bar{w}}}{2\sqrt{\bar{w}}}=A(1+\frac{\phi}{2(\sqrt{w}+\sqrt{\bar{w}})\sqrt{\bar{w}}}),
\end{align*}
it holds that
\begin{align*}
     &\partial_x (\partial_x\bar{w})-\sqrt{w}\partial_{\psi}^2(\partial_x\bar{w})+\partial_x\bar{w}A\Big(1+\frac{\phi}{2(\sqrt{w}+\sqrt{\bar{w}})\sqrt{\bar{w}}}\Big)
     \\&=-(\sqrt{w}-\sqrt{\bar{w}})\partial_{\psi}^2(\partial_x\bar{w})\\
     &=-\frac{\phi}{\sqrt{w}+\sqrt{\bar{w}}}\frac{1}{\sqrt{\bar{w}}}\big(\partial_x (\partial_x\bar{w})-  \frac{(\partial_x\bar{w})^2}{2\bar{w}}\big)
     \\&=-\frac{\phi}{(\sqrt{w}+\sqrt{\bar{w}})\sqrt{\bar{w}}}\Big[\partial_x (\partial_x\bar{w})+\partial_x\bar{w}A\Big(1+\frac{\phi}{2(\sqrt{w}+\sqrt{\bar{w}})\sqrt{\bar{w}}}\Big)\Big].
\end{align*}
Due to $\partial_{\tilde{x}} (\frac{\partial_{\tilde{x}}\bar{w}}{2})=  \partial_{\tilde{x}}(\partial_{y}^2\bar{u})$, we get by \eqref{xtildtox} and \eqref{eq:EV} that
\begin{align*}
    \partial_{\tilde{x}}(\partial_{y}^2\bar{u})=&\partial_x\partial_{y}^2\bar{u}-\frac{ \int_0^{y}\bar{u}_xdy'}{\bar{u}}\partial_{y}\partial_{y}^2\bar{u}\\
    =&-\frac{1}{(x+1)^2}f^{(3)}+\frac{1}{x+1}\big(-\frac{y}{2(\sqrt{x+1})^3}\big)f^{(4)}+\frac{1}{2}\frac{1}{\sqrt{x+1}}\big(\zeta-\frac{f}{f'}\big)\frac{1}{(\sqrt{x+1})^3}f^{(4)}
\\=&-\frac{1}{(x+1)^2}f^{(3)}+\frac{1}{2}\frac{1}{\sqrt{x+1}}\big(-\frac{f}{f'}\big)\frac{1}{(\sqrt{x+1})^3}f^{(4)},
\end{align*}
where we used $\partial_{y}^2\bar{u}=\frac{1}{x+1}f'''.$ Therefore, by the properties of $f$ and \eqref{star}, for large $L$
\begin{align*}
 &\big|\partial_{\tilde{x}} (\frac{\partial_{\tilde{x}}\bar{w}}{2})\big|\leq\frac{C}{(x+1)^2}\zeta^2\quad\text{for}\,\,\zeta\leq L,\\
 &\big|\partial_{\tilde{x}} (\frac{\partial_{\tilde{x}}\bar{w}}{2})\big|\leq\frac{C}{(x+1)^2}f''\zeta^3\quad \text{for}\,\,\zeta> L.
\end{align*}
We denote $g_2=\partial_x\bar{w}$ and $D_1=\partial_{x} (\partial_x\bar{w})$. Then we find
\begin{align*}
\partial_x g_2-\sqrt{w}\partial_{\psi}^2g_2+g_2A=&
     -\frac{\phi}{(\sqrt{w}+\sqrt{\bar{w}})\sqrt{\bar{w}}}\Big[D_1+\partial_x\bar{w}A\Big(\frac{3}{2}+\frac{\phi}{2(\sqrt{w}+\sqrt{\bar{w}})\sqrt{\bar{w}}}\Big)\Big]
     \\=&
     -\frac{\phi}{(\sqrt{w}+\sqrt{\bar{w}})\sqrt{\bar{w}}}(D_1+D_2).
\end{align*}
By \eqref{phi12} and \eqref{phi122},  for small positive $\alpha<\frac{1}{2},$
\begin{align}\label{phih2}\begin{split}
&\Big|\frac{\phi}{(\sqrt{w}+\sqrt{\bar{w}})\sqrt{\bar{w}}}\Big|\leq \frac{C}{(x+1)^\alpha}\quad \text{for}\,\,\zeta\leq L,\\
&\Big|\frac{\phi}{(\sqrt{w}+\sqrt{\bar{w}})\sqrt{\bar{w}}}\Big|\leq \frac{Cf''}{\sqrt{x+1}}\quad \text{for}\,\,\zeta> L.
\end{split}
\end{align}
 Thanks to $-\partial_x\bar{w}A|_{(x,\psi)}\sim\frac{(ff''(\zeta))^2}{(x+1)^2}\frac{1}{(f'(\zeta))^2},$ by \eqref{phih2}, we have
\begin{align*}
     D_2&=\frac{1}{(x+1)^2}O(\zeta^2)=\frac{1}{(x+1)^2}O(h)\quad \text{for}\,\,\zeta\leq L,
     \\ D_2&=\frac{1}{(x+1)^2}O((f'')^2\zeta^2)\quad \text{for}\,\,\zeta> L.
\end{align*}
Let $D_3=D_1+D_2$, which satisfies
\begin{align}\label{729}
\begin{split}
   &D_3=\frac{1}{(x+1)^2}O(\zeta^2)=\frac{1}{(x+1)^2}O(h)\quad \text{for}\,\,\zeta\leq L,\\
    &D_3=\frac{1}{(x+1)^2}O(f''\zeta^3)\quad \text{for}\,\,\zeta>L.
    \end{split}
\end{align}

\subsection{Construction of a barrier function}
Let $g=(-g_2)e^{-K(x+1)^{-\epsilon}}.$ Note $g>0$ for $\psi>0.$ By \eqref{phih2} and \eqref{729}, we get
\begin{align*}
\begin{split}
 \partial_x & g-\sqrt{w}\partial_{\psi}^2g+Ag
\geq \epsilon K(x+1)^{-\epsilon-1}(-\partial_x\bar{w})e^{-K(x+1)^{-\epsilon}}+e^{-K(x+1)^{-\epsilon}}D_6.
\end{split}
\end{align*}
where
\begin{align*}
   D_6=&\frac{1}{(x+1)^{2+\alpha}}O(\zeta^2)=\frac{1}{(x+1)^{2+\alpha}}O(h)\quad\text{for}\,\,\zeta\leq L,\\
   D_6=&\frac{1}{(x+1)^\frac{5}{2}}O((f'')^2\zeta^3)\quad\text{for}\,\,\zeta> L.
\end{align*}
Thanks to $-\partial_x \bar{w}= \frac{1}{x+1}ff'',$ we have
\begin{align}\label{xworder}
-\partial_x \bar{w}\sim\frac{1}{x+1}\zeta^2,\,\,\zeta\leq L,\,\,\,-\partial_x \bar{w}\sim\frac{1}{x+1}\zeta f'',\,\,\zeta> L.
\end{align}
 Then we infer that
\begin{align}\label{xbarw}\begin{split}
 \partial_x & g-\sqrt{w}\partial_{\psi}^2g+Ag
\geq \frac{1}{2}\epsilon K(x+1)^{-\epsilon-1}(-\partial_x\bar{w})e^{-K(x+1)^{-\epsilon}},\end{split}
\end{align}by taking $\epsilon<\alpha$ and $K$ large such that $\epsilon K$ is large.

\subsection{Decay estimate of $\pa_x\phi$}

\begin{proposition}\label{52}
There exist positive constants $C_K$, $K$ and $\epsilon$ such that
$$
|\partial_x\phi(x,\psi)|\leq C_Kg \quad \text{in}\quad \R_+\times \R_+,
$$
where $g=(-\partial_x \bar{w})e^{-K(x+1)^{-\epsilon}}.$
\end{proposition}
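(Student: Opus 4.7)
The plan is to apply the maximum principle to $C_K g \pm \phi^1$, where $\phi^1 = \partial_x \phi$ satisfies \eqref{eq:phi-1} and $g = (-\partial_x\bar{w})e^{-K(x+1)^{-\epsilon}}$ satisfies the sub-solution inequality \eqref{xbarw}. The argument parallels the proof of Proposition 3.1 but exploits the concavity Lemma 4.2 in a crucial way.

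First I would verify the initial/boundary conditions. At $\psi = 0$, property 1 of Section 2.4 gives $\partial_x w(x,0) = 0$ (and the same identity holds for $\bar{w}$), so $\phi^1(x,0) = 0 = g(x,0)$. As $\psi \to +\infty$, \eqref{wxinf0} gives $\phi^1 \to 0$ and clearly $-\partial_x \bar{w} \to 0$. At $x = 0$, the identity $\partial_x w = 2 u_{yy}$ (from \eqref{PvM} and \eqref{starmain}) together with the Gaussian bound \eqref{decay2inf} on $\partial_y^2 u_0$, combined with the lower bound $-\partial_x \bar{w}(0,\psi) \gtrsim e^{-c\zeta^2}$ (from \eqref{xw}--\eqref{blainf}, legitimate since $C_7 > C_1$), lets me choose $C_K$ large enough (absorbing the factor $e^K$ in $g$) so that $|\phi^1(0,\psi)| \leq C_K g(0,\psi)$ for all $\psi \geq 0$.

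Next comes the interior argument. Suppose for contradiction $(C_K g - \phi^1)(x_0,\psi_0) < 0$ at an interior negative minimum; the case $C_K g + \phi^1$ is symmetric with the sign of $\phi^1$ reversed. Then $\phi^1(x_0,\psi_0) > C_K g(x_0,\psi_0) > 0$. Combining $C_K$ times \eqref{xbarw} with \eqref{eq:phi-1} gives
\begin{align*}
L(C_K g - \phi^1) + A(C_K g - \phi^1) &\geq \tfrac{1}{2} C_K \epsilon K (x+1)^{-\epsilon-1}(-\partial_x\bar{w}) e^{-K(x+1)^{-\epsilon}} \\
 &\quad - \tfrac{w_x}{2w}\phi^1 - \tfrac{w_x}{w} D_4 - D_5.
\end{align*}
The structural point is that by Lemma 4.2, $w_x = 2 u_{yy} \leq 0$, so $-\frac{w_x}{2w} \phi^1 \geq 0$ at this minimum where $\phi^1 > 0$. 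Since also $A(C_K g - \phi^1) < 0$, the contradiction reduces to showing that the barrier-gradient term pointwise dominates $\big|\tfrac{w_x}{w} D_4\big| + |D_5|$, which would force the right-hand side to be strictly positive while the minimum property forces the left-hand side $\leq 0$.

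The main obstacle is this pointwise domination, which must be checked in two regimes. For $\zeta \leq L$, one uses $-\partial_x \bar{w} \sim \zeta^2/(x+1)$ from \eqref{xw} together with $D_4 = O(\zeta^2)(x+1)^{-1-\alpha}$ and $D_5 = O(\zeta^2)(x+1)^{-2-\alpha}$ from \eqref{eq:phi-1}; the gain $(x+1)^{-\alpha}$ beats the loss $(x+1)^{-\epsilon}$ in the barrier once $\epsilon < \alpha$, provided $|w_x/w|$ is controlled near $\psi = 0$. The latter control comes from property 4 of Section 2.4, giving $|w_x| \lesssim \psi^{1-\beta}$, together with the quadratic vanishing $w \sim \bar{w} \sim \psi$, so $w_x / w$ is locally integrable and in fact is dominated by the inner profile of the barrier gradient. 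For $\zeta > L$, the Gaussian decay $f'' \sim e^{-c\zeta^2}$ from \eqref{blainf} absorbs all polynomial $\zeta$-factors in $D_4$ and $D_5$, and $-\partial_x \bar{w} \sim \zeta f''/(x+1)$ provides the matching Gaussian on the barrier side. Choosing $\epsilon$ small and then $K$ (hence $C_K$) large yields the required strict inequality, producing the contradiction and completing the proof.
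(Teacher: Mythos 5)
Your skeleton (the barrier $g=(-\partial_x\bar w)e^{-K(x+1)^{-\epsilon}}$, the equation \eqref{eq:phi-1}, the sub-solution inequality \eqref{xbarw}, and the sign $-\frac{w_x}{2w}\ge 0$ from the concavity Lemma \ref{prop:uyy-upper}) is the paper's, but the decisive interior step fails as written. Having discarded the zeroth-order term via $-\frac{w_x}{2w}\phi^{1}\ge 0$, you ask the barrier-gradient term $\frac12 C_K\epsilon K(x+1)^{-\epsilon-1}(-\partial_x\bar w)e^{-K(x+1)^{-\epsilon}}$ to dominate $|\frac{w_x}{w}D_4|$ pointwise. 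In the region $\zeta\le L$ both $-\partial_x\bar w$ and $D_4$ are of size $\zeta^2$ (by \eqref{xworder} and the definition of $D_4$ after \eqref{eq:phi-1}), so your requirement amounts to $|\frac{w_x}{w}|\lesssim (x+1)^{\alpha-\epsilon-1}$. No such bound is available at this stage: Oleinik's estimate \eqref{owp} gives $|w_x|\le M\psi^{1-\beta}$ only on compact $x$-intervals with $M$ depending on the interval (no decay in $x$), and since $w\sim\bar w\sim \psi/\sqrt{x+1}$ for $h$ small (by \eqref{wh}), it yields $|w_x/w|\sim \sqrt{x+1}\,\psi^{-\beta}$, which blows up as $\psi\to 0$ and grows in $x$; local integrability is irrelevant for a pointwise maximum-principle comparison. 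Worse, the bound you actually need, $|w_x|\lesssim w/(x+1)$ near $\psi=0$, is essentially the content of Proposition \ref{52} itself (note $-\partial_x\bar w\sim \bar w/(x+1)$ there), so the argument is circular. The repair is the one the paper uses and your setup almost contains: at the negative minimum $\phi^{1}>C_Kg$, hence $-\frac{w_x}{2w}\phi^{1}\ge -\frac{w_x}{2w}C_Kg$, and since $\frac{w_x}{w}D_4$ carries the same prefactor $\frac{w_x}{w}$, it is absorbed by $-\frac{w_x}{2w}C_Kg$ as soon as $C_Kg\ge 2|D_4|$, which follows from \eqref{xworder} once $C_K\ge Ce^{K}$ --- no control of $w_x/w$ is ever needed. (Your treatment of $D_5$ by the barrier-gradient term with $\epsilon<\alpha$, and of the region $\zeta>L$ by Gaussian matching, is fine.)

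There is also a flaw in your comparison at $x=0$: the lower bound $-\partial_x\bar w(0,\psi)\gtrsim e^{-c\zeta^2}$ is false near $\psi=0$, where $-\partial_x\bar w(0,\psi)=ff''\sim\psi$ vanishes linearly, while the Gaussian bound \eqref{decay2inf} alone does not force $\partial_x w(0,\psi)=2u_0''(y)$ to vanish at all. To get $|\phi^{1}(0,\psi)|\le C_Kg(0,\psi)$ near $\psi=0$ you must invoke Oleinik's hypothesis $u_0''(y)=O(y^2)$ from \eqref{OI}, which gives $\partial_xw(0,\psi)=O(\psi)$ and thus matches the linear vanishing of $-\partial_x\bar w(0,\psi)$; this is exactly how the paper closes the initial comparison (its \eqref{compatb}--\eqref{ini2d}).
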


\begin{proof}
By the assumption, $\partial_y u_0(0)>0$ and
$\partial_y^2u_0(y)=O(y^2)$ near $ y=0$. Then near $\psi=0,$ $|\partial_x w|_{x=0}|\leq C\psi.$
On $\{x=0\},$ we have
 \begin{align}\label{inineg}
& \partial_{x}\bar{w}=-\frac{1}{x+1}ff''<0\quad\text{for}\,\,\psi>0,\\
& \partial_{x}\bar{w}=-\frac{1}{x+1}ff'' \sim \psi\quad\psi \,\, near\,\,0.
\end{align}
Hence, near $\psi=0,$
\begin{align}\label{compatb}
|\partial_x \phi|_{x=0}|\leq C\psi\leq C(-\partial_x \bar{w}|_{x=0}).
\end{align}
By \eqref{decay2inf}, \eqref{blainf} and \eqref{yyyorder}, we have
\begin{align}\label{dfi}
|\partial_x \phi|_{x=0}|\leq -C\partial_x \bar{w}|_{x=0}\quad \text{as}\,\, \psi\rightarrow+\infty.
\end{align}
Summing \eqref{inineg}, \eqref{compatb} and \eqref{dfi}, we deduce that
\begin{align}\label{ini2d}
|\partial_x \phi|_{x=0}|\leq C(-\partial_x \bar{w}|_{x=0}).
\end{align}

Note $e^{-K}\leq e^{-K(x+1)^{-\epsilon}}\leq 1.$ By \eqref{ini2d}, we may take  $C_K$ large so that $C_Kg \pm\phi^{1}(0,\psi) \geq 0$,  and  $C_Kg \pm\phi^{1}(x,0)= 0$ and  $C_Kg \pm\phi^{1}\rightarrow0$ as $h\rightarrow\infty.$ Now we claim $C_Kg \pm\phi^{1}\geq 0.$ Otherwise,   a negative minimum is obtained at some point $(x_0,\psi_0)\in(0,x_0]\times (0,+\infty)$. Let us first point out that $-\frac{w_x}{2w}\geq0.$
By \eqref{eq:phi-1} and \eqref{xbarw}, in $\{0<h\leq\frac{1}{M}\},$  taking $\epsilon<\alpha$ small, $K\gg\frac{1}{\epsilon}$,
\begin{align}\nonumber
\begin{split}
&\partial_x (C_Kg \pm\phi^{1})-\sqrt{w}\partial_{\psi}^2 (C_Kg \pm\phi^{1})\\&\qquad+A(C_Kg \pm\phi^{1})
-\frac{w_x}{2w}(C_Kg \pm\phi^{1})\\
&>-\frac{w_x}{2w}\Big[C_Kg +\frac{1}{(x+1)^{1+\alpha}}
O(\zeta^2)\Big]\\&\qquad+C_K\frac{1}{2}\epsilon K(x+1)^{-\epsilon-1}(-\partial_x\bar{w})e^{-K} +\frac{1}{(x+1)^{2+\alpha}}
O(\zeta^2) .
\end{split}
\end{align}
By \eqref{xworder},  taking $C_K$ large so that $C_K\geq Ce^K$, we have
\begin{align*}
&\partial_x (C_Kg \pm\phi^{1})-\sqrt{w}\partial_{\psi}^2 (C_Kg \pm\phi^{1})\\&\qquad+A(C_Kg \pm\phi^{1})
-\frac{w_x}{2w}(C_Kg \pm\phi^{1})>0.
\end{align*}
Therefore, the negative minimum point $(x_0,\psi_0)$ cannot be in $\{0<h\leq\frac{1}{M}\}.$

By \eqref{eq:phi-1} and \eqref{xbarw}, in $\{h\geq\frac{1}{M}\},$  taking $\epsilon<\alpha$ small, $K\gg\frac{1}{\epsilon}$,
 \begin{align}\label{step1}\begin{split}
&\partial_x (C_Kg \pm\phi^{1})-\sqrt{w}\partial_{\psi}^2 (C_Kg \pm\phi^{1})\\&\qquad+A(C_Kg \pm\phi^{1})
-\frac{w_x}{2w}(C_Kg \pm\phi^{1})\\&>-\frac{w_x}{2w}\Big[C_Ke^{-K}(-\partial_x \bar{w}) +O(\zeta(f'')^2)(x+1)^{-\frac{3}{2}}\Big]\\&\quad+\frac{1}{2}\epsilon K(x+1)^{-\epsilon-1}C_Ke^{-K}(-\partial_x\bar{w}) +(x+1)^{-\frac{5}{2}}O((f'')^2\zeta^3).\end{split}
\end{align}
By \eqref{xworder}, taking $C_K$ large so that $C_K\geq Ce^K$, we also have
\begin{align*}
&\partial_x (C_Kg \pm\phi^{1})-\sqrt{w}\partial_{\psi}^2 (C_Kg \pm\phi^{1})\\&\qquad+A(C_Kg \pm\phi^{1})
-\frac{w_x}{2w}(C_Kg \pm\phi^{1})>0.
\end{align*}
Therefore, the negative minimum point $(x_0,\psi_0)$ cannot be in $\{h>\frac{1}{M}\}$. Therefore, there is no negative minimum point $(x_0,\psi_0)$ in the interior.
\end{proof}


\subsection{Proof of Theorem \ref{thm:decay0-V}}\label{lat}

By \eqref{zeps} and Proposition \ref{51}, we get
\begin{align}\label{it1phi}\begin{split}
|\phi(x,\psi)|&\leq \frac{C }{\sqrt{x+1}}e^{-C_1\frac{(y(\psi;\bar{u}))^2}{x+1}}\leq \frac{C }{\sqrt{x+1}}e^{-c\frac{\psi^2}{x+1}}.\end{split}
\end{align}
By Lemma \ref{lem:com}, \eqref{wh} and Proposition \ref{52}, we have
\begin{align}\nonumber
\begin{split}
\big|\frac{1}{\sqrt{w}}\partial_{x}\phi(x,\psi)\big|&\leq |\frac{C}{\sqrt{\bar{w}}}\partial_{x}\phi(x,\psi)|\\&\leq \frac{C }{x+1}e^{-C_1\frac{(y(\psi;\bar{u}))^2}{x+1}}\leq \frac{C }{x+1}e^{-c\frac{\psi^2}{x+1}}.\end{split}
\end{align}
By Lemma \ref{lem:com}, \eqref{wh} and Proposition \ref{51}, we have
$$\big|\frac{\phi}{\sqrt{w}}(x,\psi)|\leq C|\frac{\phi}{\sqrt{\bar{w}}}(x,\psi)\big| \leq \frac{1}{\sqrt{x+1}}e^{-c\frac{\psi^2}{x+1}}.$$
By Lemma \ref{lem:A}, we have
\begin{align}\nonumber
\big|A\frac{\phi}{\sqrt{w}}(x,\psi)\big|\leq \frac{1}{(x+1)^{\frac{3}{2}}}e^{-c\frac{\psi^2}{x+1}}.
\end{align}
Then we infer from \eqref{Main} that
\begin{align}\label{2psiphi}\begin{split}
    |\partial_\psi^2 \phi|&\leq \frac{1}{\sqrt{w}}(|\partial_{x}\phi|+|A\phi|)\\
    &\leq \frac{C}{x+1}e^{-c\frac{\psi^2}{x+1}}+\frac{1}{(x+1)^{\frac{3}{2}}}e^{-c\frac{\psi^2}{x+1}}\leq\frac{C}{x+1}e^{-c\frac{\psi^2}{x+1}}.\end{split}
\end{align}

Now we prove that
\begin{align}\label{eq:psi-decay}
|\partial_\psi \phi |\leq \frac{C}{(x+1)^{\frac{3}{4}}}e^{-c\frac{\psi^2}{x+1} }.
\end{align}

For any fixed $(x,\psi)\in \R_+\times \R_+$, take $$\hat{\psi}=\psi+(x+1)^{\frac{1}{4}}>\psi.$$ Set $\sigma_x=(x+1)^{\frac{1}{4}}.$ Then $-\psi+\hat{\psi}=\sigma_x.$
By the mean value property, there exists a point $\psi_1\in(\psi,\hat{\psi})$ such that
$$\partial_\psi \phi(x,\psi_1) =\frac{\phi(x,\hat{\psi})-\phi(x,\psi)}{\sigma_x}.$$
Since $\hat{\psi}>\psi,$ by \eqref{it1phi}, $$|\partial_\psi \phi(x,\psi_1) | \leq\frac{2 \frac{C}{\sqrt{x+1}}e^{-c\frac{\psi^2}{x+1} }}{\sigma_x}.$$
Then since $\psi_1>\psi,$ by \eqref{2psiphi},
\begin{align*}
  |\partial_\psi \phi(x,\psi)|&\leq |\partial_\psi \phi(x,\psi) -\partial_\psi \phi(x,\psi_1) |+|\partial_\psi \phi(x,\psi_1) |\\
  &\leq \frac{C}{x+1} e^{-c\frac{\psi^2}{x+1}}\sigma_x
  +\frac{\frac{2 C}{\sqrt{x+1}}e^{-c\frac{\psi^2}{x+1} }}{\sigma_x}
  \\&\leq \frac{C}{(x+1)^{\frac{3}{4}}}e^{-c\frac{\psi^2}{x+1} }.
\end{align*}

Since $\psi$ is arbitrarily chosen, we obtain the desired result.

\section{Decay estimates of high order derivatives of $w$}

In this section, we prove Theorem \ref{thm:w-decay}.

\subsection{Comparison lemma on $\partial_\psi w$}
\begin{lemma}\label{lem:com-w}
For any  $h_1>0$, there exists $x_1>0$ depending on $h_1$ such that
 \begin{align}\nonumber
  \frac{1}{2}\partial_\psi \bar{w}\leq\partial_\psi w\leq \frac{3}{2}\partial_\psi \bar{w}\quad for\,\,h\in[0,h_1] ,\,\, \,\,x\in[x_1,+\infty).
 \end{align}
 Moreover, $\partial_\psi w \ge 0$ in $\R_+\times \R_+$.
 \end{lemma}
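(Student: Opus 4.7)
The plan is to split the lemma into two claims and handle them separately. The bounds $\frac{1}{2}\partial_\psi \bar w \le \partial_\psi w \le \frac{3}{2}\partial_\psi \bar w$ will follow from the decay estimate of $\partial_\psi \phi$ in Theorem \ref{thm:decay0-V} combined with a lower bound on $\partial_\psi \bar w$ that is uniform on $h \in [0, h_1]$. The global nonnegativity $\partial_\psi w \ge 0$ will instead be a consequence of the concavity result, Lemma \ref{prop:uyy-upper}.

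For the comparison, I would first write $\partial_\psi w = \partial_\psi \bar w + \partial_\psi \phi$. Since $\zeta = f^{-1}(h)$ is a monotone bijection, $h \in [0, h_1]$ corresponds to $\zeta \in [0, \zeta_1]$ with $\zeta_1 := f^{-1}(h_1)$. Because $f''$ is nonincreasing (as $f''' \le 0$ by \eqref{BE}) and $f''(0) = b_0 > 0$, we obtain
\begin{equation*}
\partial_\psi \bar w(x,\psi) = \frac{2}{\sqrt{x+1}} f''(\zeta) \ge \frac{2 f''(\zeta_1)}{\sqrt{x+1}} \quad \text{for } h \in [0, h_1].
\end{equation*}
On the other hand, Theorem \ref{thm:decay0-V} gives $|\partial_\psi \phi(x,\psi)| \le C(x+1)^{-3/4}$. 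Comparing the two,
\begin{equation*}
\frac{|\partial_\psi \phi|}{\partial_\psi \bar w} \le \frac{C}{2 f''(\zeta_1)} (x+1)^{-1/4},
\end{equation*}
so choosing $x_1 = x_1(h_1)$ large enough that the right-hand side is bounded by $1/2$ for $x \ge x_1$ yields the desired two-sided inequality on $[0, h_1]$.

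For the positivity statement, I would argue from Lemma \ref{prop:uyy-upper}, which, under the assumptions of Theorem \ref{thm:decay1}, gives $\partial_y^2 u \le 0$ on $\R_+ \times \R_+$. Hence for each fixed $x$, the function $y \mapsto \partial_y u(x,y)$ is nonincreasing. Since $u$ is bounded and $u(x,y) \to 1$ as $y \to \infty$, the only possible limit is $\lim_{y\to\infty} \partial_y u(x,y) = 0$ (a nonzero limit would make $u$ unbounded at infinity). Monotonicity then forces $\partial_y u \ge 0$ everywhere. Translating via the identity $\partial_\psi w = 2\partial_y u$ from \eqref{PvM}, this gives $\partial_\psi w \ge 0$ throughout $\R_+ \times \R_+$.

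The only mild obstacle is justifying that $\partial_y u(x, \cdot) \to 0$ at $\infty$ rigorously from boundedness plus monotonicity; this is essentially a one-line integral argument (if the limit $L$ were positive, $u(x,y) \to \infty$; if negative, $u(x,y) \to -\infty$), but it is worth spelling out once so that the jump from concavity to $\partial_\psi w \ge 0$ is clean. Everything else is a direct application of estimates already established.
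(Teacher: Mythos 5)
Your proof is correct. The overall decomposition is the same as the paper's: a quantitative comparison of $\partial_\psi\phi$ against $\partial_\psi\bar w$ on $\{h\le h_1\}$ for large $x$, and a separate qualitative argument deducing $\partial_\psi w\ge 0$ from concavity. What differs is how you obtain the first part. You invoke Theorem \ref{thm:decay0-V} as a black box ($|\partial_\psi\phi|\lesssim(x+1)^{-3/4}$), pair it with the uniform lower bound $\partial_\psi\bar w\ge 2f''(\zeta_1)/\sqrt{x+1}$ valid on $h\le h_1$ (using $f'''\le 0$ so $f''$ is decreasing), and conclude that the ratio is $O((x+1)^{-1/4})$. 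The paper instead re-runs the finite-difference mean-value argument from the proof of Theorem \ref{thm:decay0-V}, but keeps track of the multiplicative factor $\partial_\psi\bar w(x,\psi)$ throughout by exploiting monotonicity of $h\mapsto f''$; it arrives at the pointwise bound $|\partial_\psi\phi(x,\psi)|\le C_{h_1}(x+1)^{-1/4}\partial_\psi\bar w(x,\psi)$. The decay rate in $x$ is identical, so both give the desired two-sided comparison after taking $x_1$ large; your route is shorter since it reuses an already-proved estimate, while the paper's route is self-contained within the lemma's proof and gives a pointwise ratio on all of $\R_+\times\R_+$ rather than only on $h\le h_1$ (though only the latter is needed). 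For the positivity, both you and the paper argue from $\partial_y^2u\le 0$ that $\partial_y u$ is nonincreasing in $y$; the paper excludes a negative value by integrating forward in $y$ and contradicting $u>0$, whereas you pass to the limit $\partial_y u(x,y)\to 0$ as $y\to\infty$ via boundedness of $u$ — a cosmetic variant. No gaps; the invocation of Theorem \ref{thm:decay0-V} is legitimate since its proof precedes this lemma and does not rely on it.
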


\begin{proof}
By \eqref{Ah}, we have
\begin{align}\nonumber
|A(x,\psi)|\leq\frac{C}{x+1}\frac{-f'''(\zeta)}{(f'(\zeta))^2}\leq\frac{C}{x+1}
\frac{ff''(\zeta)}{(f'(\zeta))^2}.
\end{align}
Hence, by \eqref{Main}, Lemma \ref{lem:com}, Proposition \ref{51}, Proposition \ref{52}, \eqref{fu} and the properties of $f$, we deduce that for $h\in[0,h_1],$
\begin{align}\label{510}\begin{split}
    |\partial_\psi^2 \phi|&\leq \frac{1}{\sqrt{w}}(|\partial_{x}\phi|+|A\phi|)\\
    &\leq \frac{C}{f'(\zeta)}\Big((-\partial_x \bar{w})+\frac{1}{x+1}
\frac{ff''(\zeta)}{(f'(\zeta))^2}\partial_\psi \bar{w}\Big)\\
&\leq \frac{C}{f'(\zeta)}\Big(\frac{f(\zeta)}{\sqrt{x+1}}\partial_\psi \bar{w}+\frac{1}{x+1}
\frac{ff''(\zeta)}{(f'(\zeta))^2}\partial_\psi \bar{w}\Big)
\\&\leq\frac{ C_{h_1}}{\sqrt{x+1}}\partial_\psi \bar{w},
\end{split}
\end{align}
where $C_{h_1}$ is a positive constant depending on $h_1.$

For any fixed $(x,\psi)\in \R_+\times \R_+$, take $$\hat{\psi}=\psi+(x+1)^{\frac{1}{4}}>\psi.$$ Set $\sigma_x=(x+1)^{\frac{1}{4}}.$ Then $-\psi+\hat{\psi}=\sigma_x.$ By the mean value property, there exists a point $\psi_1\in(\psi,\hat{\psi})$ such that
$$\partial_\psi \phi(x,\psi_1) =\frac{\phi(x,\hat{\psi})-\phi(x,\psi)}{\sigma_x}.$$
Note, by \eqref{fu},  $\sqrt{x+1}\partial_\psi \bar{w}|_{(x,\psi)}= 2f''|_{\zeta=\frac{y(\psi;\bar{u})}{\sqrt{x+1}}}$ is decreasing in $h.$ Due to $\hat{\psi}>\psi,$ we get by Proposition \ref{51} that
$$|\partial_\psi \phi(x,\psi_1) | \leq\frac{C \partial_\psi \bar{w}(x,\psi)}{\sigma_x}.$$
Due to $\psi_1>\psi,$ we get by \eqref{510} that
\begin{align*}
  |\partial_\psi \phi(x,\psi)|&\leq |\partial_\psi \phi(x,\psi) -\partial_\psi \phi(x,\psi_1) |+|\partial_\psi \phi(x,\psi_1) |\\
  &\leq \sigma_x\frac{ C_{h_1}}{\sqrt{x+1}}\partial_\psi \bar{w}(x,\psi)
  +\frac{C \partial_\psi \bar{w}(x,\psi)}{\sigma_x}.
\end{align*}
Then for any $\epsilon\in(0,1),$ we can fix $x_1$ large depending on $h_1$ and $\epsilon,$ such that
 \begin{align}\nonumber
(1-\epsilon)\partial_\psi \bar{w}\leq\partial_\psi w\leq (1+\epsilon)\partial_\psi \bar{w},\quad \text{for}\,\,h\in[0,h_1] \,\,x\in[x_1,+\infty).
\end{align}
Since $\psi$ is arbitrarily chosen, we obtain the desired result by taking $\epsilon=\frac{1}{2}$.

Since $\partial_y^2u(x,y)\leq 0$ and the positivity of $u$,  $\partial_yu$ cannot take a negative value at some point.
Then $\partial_\psi w\geq 0$ due to $2\partial_y u =\partial_\psi w$.
\end{proof}

\subsection{Uniform estimate of $\pa_x^2\phi$ at a fixed time}

Since there is no requirement on higher derivatives of $u_0$, we take some positive constant $x_1$ as a new initial time and discuss the data on $x=x_1.$ Note, by \cite{WZ19}, $w$ is smooth in $[x_1,+\infty)\times(0,+\infty).$

\begin{lemma}\label{add1}
Under the assumption in Theorem \ref{thm:decay0}, for any $x_1\in(3,+\infty),$ there exists a constant $c$ independent of $x_1$ and two constants $B_{x_1}$ and $N_1$ depending on $x_1$ such that
\begin{align}\label{anyinitdecay}
    |\pa_x^2\phi(x_1,\psi)|\leq  B_{x_1}\left\{
\begin{aligned}
\frac{1}{N_1}e^{-\frac{N_1^2}{x_1+1}c}\psi,\quad&\psi\in [0,N_1),\\
e^{-\frac{\psi^2}{x_1+1}c},\quad&\psi\in [N_1,+\infty).
\end{aligned}
\right.
\end{align}
\end{lemma}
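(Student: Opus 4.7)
The strategy is to estimate $\partial_x^2\phi$ on the single slice $x=x_1$ by using the parabolic equation \eqref{Main} differentiated in $x$, together with the decay estimates from Theorem \ref{thm:decay0-V} and Proposition \ref{52} and the smoothness of $w$ on $[x_1,+\infty)\times(0,+\infty)$ from \cite{WZ19}. No fresh maximum principle is required for this lemma; it is a regularity/bootstrap statement that prepares initial data on $\{x_1\}\times\R_+$ for the subsequent sections.

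First, from \eqref{Main} one has $\partial_x\phi=\sqrt w\,\partial_\psi^2\phi-A\phi$, and a further $x$-derivative yields
$$\partial_x^2\phi=\tfrac{\partial_x w}{2\sqrt w}\partial_\psi^2\phi+\sqrt w\,\partial_\psi^2\phi^{1}-(\partial_x A)\phi-A\,\partial_x\phi,\qquad \phi^{1}:=\partial_x\phi.$$
Three of the four terms on the right are handled directly by the Gaussian bounds of Theorem \ref{thm:decay0-V} and Proposition \ref{52}, by $|A|\le C/(x+1)$ from Lemma \ref{lem:A}, by the comparisons of Lemma \ref{lem:com} and \eqref{wh}, and by the explicit formula for $\partial_x A$ computed in the derivation of \eqref{eq:phi-1} (which decays like $e^{-c\psi^2/(x+1)}$ in $\psi$). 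The constants degrade in $x_1$, which is permitted, but the exponent $c$ is inherited from the earlier estimates and hence is independent of $x_1$.

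The remaining term $\sqrt w\,\partial_\psi^2\phi^{1}$ requires a pointwise control of $\partial_\psi^2\phi^{1}$ at $x=x_1$. The plan is to apply classical interior Schauder estimates to the linear parabolic equation \eqref{prexderM} satisfied by $\phi^{1}$ on a parabolic box inside $\{x\le x_1\}$ centered at the target point. For $\psi\ge N_1$, with $N_1$ chosen large depending on $x_1$, the coefficient $\sqrt w$ is bounded below by Lemma \ref{lem:com} and \eqref{wh}, so the equation is uniformly parabolic on the box; the Schauder estimate then bounds $|\partial_\psi^2\phi^{1}(x_1,\psi)|$ by the sup norms of $\phi^{1}$ and of the forcing $\tfrac{w_x}{w}D_4+D_5$ on the box. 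Each of these decays like $e^{-c\psi^2/(x_1+1)}$ using Proposition \ref{52} together with the explicit bounds on $D_4,D_5$ in \eqref{eq:phi-1}, which delivers the Gaussian branch of \eqref{anyinitdecay}.

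For $\psi\in[0,N_1)$, the linear-in-$\psi$ shape comes from vanishing at the boundary plus smoothness. Property~1 in Section \ref{Pw} gives $\partial_x w(x,0)=0$, and from $-\partial_x\bar w=\tfrac{ff''}{x+1}$ combined with $f(0)=0$ one has $\partial_x\bar w(x,0)=0$, so $\phi^{1}(x,0)=0$ and hence $\partial_x^2\phi(x_1,0)=0$. Together with the smoothness of $w$ on $[x_1,+\infty)\times(0,+\infty)$ and a uniform bound on $|\partial_\psi\partial_x^2\phi(x_1,\psi)|$ on $[0,N_1]$ (again from Schauder applied to \eqref{prexderM}, using the refined profile $w\sim\psi$ from \eqref{wh} and $|\partial_x w|\le M\psi^{1-\beta}$ from property~4 of Section \ref{Pw} to control coefficients uniformly near the boundary), a Taylor expansion at $\psi=0$ gives $|\partial_x^2\phi(x_1,\psi)|\le C_{x_1}\psi$ on $[0,N_1]$. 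Choosing the overall constant so that this linear branch meets the Gaussian branch continuously at $\psi=N_1$ yields the form stated in \eqref{anyinitdecay}.

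The main obstacle is the degeneracy $\sqrt w\to 0$ at $\psi=0$, which obstructs a naive application of Schauder theory and, in principle, could spoil the uniform control of $\partial_\psi\partial_x^2\phi$ as $\psi\to 0^+$. The remedy is to exploit the explicit boundary profiles $w\sim\psi$ and $|\partial_x w|\le M\psi^{1-\beta}$ to keep track of the coefficients precisely near $\psi=0$, which is essentially a book-keeping step once those profiles are in hand; the universal exponent $c$ in the Gaussian factor is preserved throughout, while $B_{x_1}$ and $N_1$ absorb all $x_1$-dependent constants coming from interior regularity.
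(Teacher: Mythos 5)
Your treatment of the region $\psi\ge N_1$ is essentially the paper's argument: freeze a Gaussian weight at the center of a unit parabolic box (where $w,\bar w\sim 1$), apply interior Schauder to the equation for $\partial_x\phi$, and let the sup of the weighted data supply the factor $e^{-c\psi^2/(x_1+1)}$ with $c$ independent of $x_1$. One caveat there: Schauder needs the forcing in $C^\alpha$, not merely in $L^\infty$; the paper supplies this by a preliminary Schauder step on the $\phi$-equation (getting $\vert a_0\phi\vert_{C^{1,\alpha}}$, hence $\vert a_0\phi^{1}\vert_{C^{\alpha}}$) before treating $g=a_0\phi^{1}$, and also uses the $x_1$-dependent bounds on all derivatives of $w$ from \cite{WZ19}. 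Your sketch only bounds sup norms of $\phi^{1}$ and of the forcing, so you should add that bootstrap (or an equivalent H\"older bound) to make the Schauder step legitimate; this is a fixable omission.

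The genuine gap is the branch $\psi\in[0,N_1)$. You propose to get $|\partial_x^2\phi|\le C_{x_1}\psi$ from $\partial_x^2\phi(x_1,0)=0$ plus a \emph{uniform} bound on $\partial_\psi\partial_x^2\phi$ on $[0,N_1]$ obtained by Schauder applied to \eqref{prexderM} "up to $\psi=0$", calling the degeneracy a book-keeping issue. It is not: near $\psi=0$ the principal coefficient $\sqrt w\sim\sqrt\psi$ degenerates and the zeroth-order coefficient $\tfrac{w_x}{2w}\sim\psi^{-\beta}$ is unbounded, so classical Schauder theory does not apply up to that boundary; interior estimates on boxes of size comparable to the distance to $\{\psi=0\}$ produce constants that blow up as $\psi\to 0$ unless the solution already vanishes to the order you are trying to prove, which makes the argument circular. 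Moreover you only invoke smoothness of $w$ on $(0,+\infty)$ in $\psi$, which gives no uniform control of third-order quantities like $\partial_\psi\partial_x^2\phi$ at the corner. The paper sidesteps this entirely: on $[x_1-2,x_1+2]\times[0,+\infty)$ it quotes the up-to-the-boundary regularity of \cite{WZ19} to write $|\partial_x^2\phi|\le|\partial_x^2 w|+|\partial_x^2\bar w|\le C_{x_1}\psi$ directly, with no maximum principle or boundary Schauder estimate in the Von Mises variables. To repair your proof, replace the degenerate-Schauder-plus-Taylor step by this appeal to the Euler-coordinate regularity of $u$ up to $y=0$ (transferred to $\partial_x^2 w$, which vanishes at $\psi=0$), or supply an actual degenerate-parabolic regularity argument, which is substantially more than book-keeping.
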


\begin{proof}
Let $\phi^{(2)}=\pa_x^2\phi$. In $D=[x_1-2,x_1+2]\times[0,+\infty),$ by \cite{WZ19},
\begin{align}\label{HP1}
    |\phi^{(2)}|\leq|\partial_x^2 w|+|\partial_x^2 \bar{w}|\leq C_{x_1}\psi.
\end{align}
Hence, we only need to consider in $D_1=[x_1-2,x_1+2]\times[N_1,+\infty)$ for $N_1>3$ large enough determined later.

Firstly, by \eqref{it1phi}, for some positive constants $c_0$ and $C$ independent of $x_1,$
\begin{align}\label{a0bd}
|\phi|\leq C e^{-\frac{\psi^2}{x+1}c_0},\quad \psi\in[  \sqrt{x_1+2},+\infty),\,\,x\in[x_1-2,x_1+2].
\end{align}

Take $N_1$ big enough such that $$N_1>  \sqrt{x_1+2}+2.$$
For any $\psi_1\in[N_1,+\infty),$ $(x_2,\psi_2)\in[x_1-1,x_1+1]\times[\psi_1-1,\psi_1+1]$,
 we consider
\begin{align}\nonumber
\partial_x (a_0\phi)-\sqrt{w}\partial_{\psi}^2 (a_0\phi)+A(a_0\phi)=0, \quad A=-\frac{\partial_x \bar{w}}{\sqrt{\bar{w}}(\sqrt{\bar{w}}+\sqrt{w})},
\end{align} in $[x_2-1,x_2+1]\times[\psi_2-1,\psi_2+1]$, where $a_0=e^{\frac{\psi_2^2}{x_2+1}c}$ with $c=\frac{1}{8}c_0$  independent of $x_1.$ By \eqref{a0bd}, we have
\begin{align}\label{a0bd1}
|a_0\phi|\leq |e^{\frac{\psi^2}{x+1}c_0}\phi|\leq C,
\end{align}where $C$ is independent of $x_1.$

Thanks to $w(x,\psi)\rightarrow1$ and  $\bar{w}(x,\psi)\rightarrow
1$ as $\psi\rightarrow +\infty$, for $N_1$ big depending on $x_1,$ we have
\begin{align}\label{unibd}
    \frac{1}{2}\leq\bar{w},w\leq \frac{3}{2}\quad in \quad D_1.
\end{align}
Moreover, by \cite{WZ19}, we know that any  derivative of $w$ has uniform upper and lower bounds $C_{x_1}$ and $-C_{x_1}$ respectively depending on $x_1$ in $D_1.$
Then by Schauder estimates in $[x_2-1,x_2+1]\times[\psi_2-1,\psi_2+1]$ and \eqref{a0bd1}, we have
$$|a_0\phi|_{C^{1,\alpha}([x_2-\frac{1}{2},x_2+\frac{1}{2}]\times[\psi_2-\frac{1}{2},\psi_2+\frac{1}{2}] )}\leq C_{x_1} $$ where $ C_{x_1}$ depends on $x_1$ and then
\begin{align}\label{1dd}
|a_0\phi^{(1)}|_{C^{\alpha}([x_2-\frac{1}{2},x_2+\frac{1}{2}]\times[\psi_2-\frac{1}{2},\psi_2+\frac{1}{2}]
)}\leq C_{x_1}. \end{align}

 Next we consider the equation of $g=a_0\phi^{(1)}$:
\begin{align}\nonumber
\begin{split}
&\partial_x g-\sqrt{w}\partial_{\psi}^2g+Ag
-\frac{w_x}{2w}g+\partial_x A\phi a_0-\frac{w_x}{2w}A(\phi a_0)=0
\end{split}
\end{align}
in $\big[x_2-\frac{1}{2},x_2+\frac{1}{2}\big]\times\big[\psi_2-\frac{1}{2},\psi_2+\frac{1}{2}\big].$
As before, by the uniform positive upper and lower bounds of $w $ and $\bar{w}$, the bounds of derivatives of $w$ depending on $x_1$ and \eqref{1dd}, we have
\begin{align}\nonumber
|g|_{C^{1,\alpha}([x_2-\frac{1}{4},x_2+\frac{1}{4}]\times[\psi_2-\frac{1}{4},\psi_2+\frac{1}{4}],
)}\leq C_{x_1},
\end{align}
which gives
\begin{align}\nonumber
|\phi^{(2)}|\leq C_{x_1} e^{-\frac{\psi_2^2}{x_2+1}c}\quad \text{in}\quad \big[x_2-\frac{1}{4},x_2+\frac{1}{4}\big]\times\big[\psi_2-\frac{1}{4},\psi_2+\frac{1}{4}\big].
\end{align}
In particular, at $(x_2,\psi_2),$ we have
\begin{align*}
|\phi^{(2)}|\leq C_{x_1} e^{-\frac{\psi^2}{x+1}c}.
\end{align*}
Since $(x_2,\psi_2)$ is an arbitrary point in $[x_1-1,x_1+1]\times[\psi_1-1,\psi_1+1]$ and $\psi_1$ is an arbitrary value in $[N_1,+\infty),$ we conclude that
\begin{align}\label{511}
|\phi^{(2)}|\leq C_{x_1} e^{-\frac{\psi^2}{x+1}c}\quad in \quad [x_1-1,x_1+1]\times[N_1,+\infty).
\end{align}
By restricting \eqref{511} to $x=x_1,$ we have the desired result.
\end{proof}

\subsection{Decay estimate of $\pa_x^2w$}
In this subsection, we discuss under the assumptions in Theorem \ref{thm:w-decay}. Note, by \cite{WZ19}, for any $X>0,$ we can take any order derivative of $w$ in $(0,X)\times(0,+\infty).$
\begin{proposition}\label{Tm73pp}
There exist large positive constants $x_1,h_1,B$ and  small positive constant $\varepsilon$ such that
\begin{align}\label{x1decaypp}
    |\partial_x^2 w(x,\psi)|\leq  B\big(e^{-(x+1)^{-\frac{1}{2}}}\partial_\psi w+g) \quad \text{in} \quad [x_1,+\infty\big)\times[0,+\infty),
\end{align} where
$$ g(x,\psi)=\frac{1}{(x+1)^2}\left\{
\begin{aligned}
0,\quad& 0\leq h<h_1-\frac{3}{2}\pi,\\
\cos(h-h_1),\quad&h_1-\frac{3}{2}\pi\le h< h_1,\\ e^{h_1^2\varepsilon}e^{-\frac{\psi^2}{x+1}\varepsilon},\quad&h\geq h_1.
\end{aligned}
\right.
$$
In particular, for any positive constant $h_2,$ there exist positive constants $x_2$ and $B_2$ such that
\begin{align}\label{x1decaypp}
    |\partial_x^2 w(x,\psi)|\leq  \frac{B_2}{\sqrt{x+1}} \quad \text{in}\,\, h\in[0,h_2],\,\,x\in[x_2,+\infty).
\end{align}
\end{proposition}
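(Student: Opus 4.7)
The plan is to apply a maximum-principle argument to the equation for $w^{(2)}:=\partial_x^2 w$ against the stated barrier, using $x=x_1$ as a new initial time so that Lemma~\ref{add1} supplies the required control on $\partial_x^2\phi(x_1,\cdot)$ (with explicit Blasius formulas handling $\partial_x^2\bar w(x_1,\cdot)$).

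First I would derive the equation for $w^{(2)}$. With $L_0:=\partial_x-\sqrt{w}\,\partial_\psi^2$, one $x$-differentiation of $L_0 w=0$ gives $L_0(\partial_x w)=(\partial_x w)^2/(2w)$, and a second differentiation combined with $\sqrt{w}\,\partial_\psi^2(\partial_x w)=\partial_x^2 w-(\partial_x w)^2/(2w)$ yields
\begin{equation*}
L_0 w^{(2)}-\frac{3\partial_x w}{2w}\,w^{(2)}=-\frac{3(\partial_x w)^3}{4w^2}.
\end{equation*}
By Lemma~\ref{prop:uyy-upper}, $\partial_x w=2\partial_y^2 u\le 0$, so the zeroth-order coefficient $-3\partial_x w/(2w)\ge 0$ is a damping term and the forcing $3|\partial_x w|^3/(4w^2)\ge 0$. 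A comparison argument for $\Phi\pm w^{(2)}$ then reduces to verifying the pointwise inequality $(L_0-\tfrac{3\partial_x w}{2w})\Phi>3|\partial_x w|^3/(4w^2)$ in the interior together with the correct sign on the parabolic boundary, where $\Phi:=B\bigl(e^{-(x+1)^{-1/2}}\partial_\psi w+g\bigr)$.

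For the first piece of $\Phi$, use $L_0(\partial_\psi w)=\partial_\psi w\cdot\partial_x w/(2w)$ (from $\psi$-differentiation of $L_0 w=0$) and $\partial_x e^{-(x+1)^{-1/2}}=\tfrac12(x+1)^{-3/2}e^{-(x+1)^{-1/2}}$ to get
\begin{equation*}
\Bigl(L_0-\tfrac{3\partial_x w}{2w}\Bigr)\bigl(e^{-(x+1)^{-1/2}}\partial_\psi w\bigr)=e^{-(x+1)^{-1/2}}\partial_\psi w\Bigl[\tfrac{1}{2(x+1)^{3/2}}-\tfrac{\partial_x w}{w}\Bigr]\ge \frac{e^{-(x+1)^{-1/2}}\partial_\psi w}{2(x+1)^{3/2}}.
\end{equation*}
By Lemma~\ref{lem:com-w}, $\partial_\psi w\gtrsim(x+1)^{-1/2}$ uniformly on any set $\{h\le h_1\}$, so this contributes a gain of order $(x+1)^{-2}$ which dominates the forcing in the small-$h$ region (where $|\partial_x w|\sim h/(x+1)$ and $w\sim h$ give forcing $\sim h/(x+1)^3$). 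For $h\ge h_1$, applying $L_0$ to the Gaussian piece $(x+1)^{-2}e^{h_1^2\varepsilon-\psi^2\varepsilon/(x+1)}$ produces a positive factor of order $\varepsilon/(x+1)^3$ from differentiating $(x+1)^{-2}$, which, for $\varepsilon$ small, strictly beats the Gaussian-decaying forcing. The cosine bridge on $h\in[h_1-\tfrac{3\pi}{2},h_1]$ is engineered so that $g$ joins continuously to the values $0$ at $h=h_1-\tfrac{3\pi}{2}$ and $(x+1)^{-2}$ at $h=h_1$ (since $\cos(-3\pi/2)=0$, $\cos(0)=1$), and at both junctions a direct computation gives $\partial_\psi g|_->\partial_\psi g|_+$, producing ridges precisely of the kind used in Proposition~\ref{51}; this rules out any negative minimum of $\Phi\pm w^{(2)}$ on these lines.

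The main technical obstacle is the middle band $h\in[h_1-\tfrac{3\pi}{2},h_1]$, where $\cos(h-h_1)<0$ on $(h_1-\tfrac{3\pi}{2},h_1-\tfrac{\pi}{2})$ and the $g$-contribution actually \emph{decreases} $\Phi$. Here one must use that $e^{-(x+1)^{-1/2}}\partial_\psi w$ stays bounded below by a positive multiple of $(x+1)^{-1/2}$ uniformly on the band, so the gain produced by this piece is at least $c(x+1)^{-2}$; since $|g|\le(x+1)^{-2}$ identically, taking $B$ large (independent of $x_1$ but chosen after $h_1$ and $\varepsilon$) ensures both $\Phi>0$ throughout and that the differential inequality holds after accounting for the derivatives of the cosine. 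Boundary data at $x=x_1$ follow from Lemma~\ref{add1} plus the explicit Blasius formulas; at $\psi=0$ one has $w^{(2)}(x,0)=\partial_x(\partial_x w(x,0))=0$ and $\Phi(x,0)\ge Be^{-(x+1)^{-1/2}}\partial_\psi w(x,0)>0$; and $w^{(2)},\Phi\to 0$ as $\psi\to\infty$ by the Oleinik asymptotics and the Gaussian tail. Finally, the ``In particular'' statement is obtained by choosing $h_1>h_2+\tfrac{3\pi}{2}$, so that $g\equiv 0$ on $\{h\le h_2\}$; then Lemma~\ref{lem:com-w} and $\partial_\psi\bar w=2f''(\zeta)/\sqrt{x+1}$ give $|\partial_x^2 w|\le Be^{-(x+1)^{-1/2}}\partial_\psi w\le \tfrac{3B}{2}\partial_\psi\bar w\le B_2/\sqrt{x+1}$.
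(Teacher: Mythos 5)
Your route is the paper's own: the equation $\partial_xF-\sqrt{w}\,\partial_\psi^2F-\frac{3\partial_xw}{2w}F=-\frac{3(\partial_xw)^3}{4w^2}$ for $F=\partial_x^2w$, the identity $L_0(\partial_\psi w)=\frac{\partial_xw}{2w}\partial_\psi w$ giving the gain $\tfrac12 e^{-(x+1)^{-1/2}}(x+1)^{-3/2}\partial_\psi w$, the forcing bound via Proposition \ref{52}, the lower bound $\partial_\psi w\gtrsim (x+1)^{-1/2}$ on $\{h\le h_1\}$ from Lemma \ref{lem:com-w}, the ridges at $h=h_1-\frac32\pi$ and $h=h_1$, and Lemma \ref{add1} at the new initial time $x=x_1$. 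However, your treatment of the region $\{h\ge h_1\}$ contains a genuine gap. You claim that applying $L_0$ to $(x+1)^{-2}e^{h_1^2\varepsilon-\varepsilon\psi^2/(x+1)}$ ``produces a positive factor of order $\varepsilon/(x+1)^3$ from differentiating $(x+1)^{-2}$,'' and that $\varepsilon$ small then beats the forcing. Differentiating $(x+1)^{-2}$ in $x$ produces the \emph{negative} term $-2(x+1)^{-3}$; the only positive $x$-contribution is $\varepsilon\frac{\psi^2}{(x+1)^2}=\frac{\varepsilon h^2}{x+1}$ from the exponent, and it dominates $-\frac{2}{x+1}$ only if $\varepsilon h_1^2$ exceeds a fixed constant, i.e.\ only after choosing $h_1$ large in terms of $\varepsilon$ (the paper imposes $\varepsilon h_1^2/2>3$). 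Taking ``$\varepsilon$ small'' with $h_1$ of moderate size (your last paragraph suggests merely $h_1>h_2+\frac32\pi$) goes the wrong way: for $h$ slightly above $h_1$ with $\varepsilon h^2\lesssim1$ one gets $L_0g\approx-\frac{2}{(x+1)^3}e^{h_1^2\varepsilon-\varepsilon h^2}<0$, and nothing in $\{h>h_1\}$ absorbs this, since Lemma \ref{lem:com-w} supplies the lower bound on $\partial_\psi w$ only for $h\le h_1$ (and even there it is exponentially small in $h_1^2$), the damping coefficient $-\frac{3\partial_xw}{2w}$ is exponentially small for $h\approx h_1>100$, and the forcing has the unfavorable sign; so the differential inequality fails there. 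The correct role of the smallness of $\varepsilon$ is the separate requirement $\varepsilon<c$, where $c$ is the Gaussian rate in the bound on $(\partial_xw)^2/w$ and in Lemma \ref{add1}; both conditions ($\varepsilon<c$ and $h_1\gg\varepsilon^{-1/2}$) must be imposed, and your sketch imposes neither explicitly.

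A second, more minor slip is in the band $h\in(h_1-\frac32\pi,h_1)$: you attribute both the positivity of $\varphi+g$ and the control of the cosine-derivative terms to ``$B$ large.'' Since the gain $B\,\tfrac{e^{-1}}{2}(x+1)^{-3/2}\partial_\psi w\gtrsim B\,c_{h_1}(x+1)^{-2}$ and the bad terms coming from $L_1(Bg)$ (which are $O\bigl(B\,C_{h_1}(x+1)^{-3}\bigr)$) both scale linearly in $B$, enlarging $B$ does nothing for this comparison; what is needed is $x_1$ large depending on $h_1$, exactly as in the paper, while $B$ large serves to dominate the $B$-independent forcing and the data at $x=x_1$. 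This slip is easily repaired because the quantities you cite do satisfy the required inequality once $x_1$ is chosen large, but as written the bookkeeping of which parameter controls which term is incorrect.
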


\begin{proof}
We first determine $h_1 $ and $\varepsilon$ which are independent of $x_1.$ Let
$$L_0v=\partial_x v-\sqrt{w}\partial_\psi^2 v,$$
and $h_1\in(100,+\infty), \varepsilon\in(0,c)$ where $c$ is the constant independent of $x_1$ in Lemma \ref{add1}.

By Lemma \ref{lem:com}, $ w\leq C$ with $C$ independent of $h_1$ and $x_1$. Taking  $\varepsilon$ to be a small positive constant  independent of $h_1$ and $x_1$, we get
\begin{align*}
L_0\Big(\frac{1}{(x+1)^2}e^{-\frac{\psi^2}{x+1}\varepsilon}\Big)&=\frac{1}{(x+1)^2}e^{-\frac{\psi^2}{x+1}\varepsilon}\Big[-\frac{2}{x+1}+\varepsilon\frac{\psi^2}{(x+1)^2}
 -\sqrt{w}\Big(-\varepsilon\frac{2}{x+1}+\varepsilon^2\frac{4\psi^2}{(x+1)^2}\Big)\Big]
\\&\geq \frac{1}{(x+1)^2}e^{-\frac{\psi^2}{x+1}\varepsilon}\Big(-\frac{2}{x+1}+\frac{\varepsilon}{2}\frac{\psi^2}{(x+1)^2}
 \Big).
 \end{align*}
This shows that for $h\geq h_1,$
\begin{align*}
L_0\Big(\frac{1}{(x+1)^2}e^{-\frac{\psi^2}{x+1}\varepsilon}\Big)&\geq \frac{1}{(x+1)^2}e^{-\frac{\psi^2}{x+1}\varepsilon}\Big(-\frac{2}{x+1}+\frac{\varepsilon}{2}\frac{h_1^2}{x+1}
 \Big).
 \end{align*}
 Fix $h_1$ large independent of $x_1$ such that $\frac{\varepsilon h_1^2}{2}>3$ and thus,
  \begin{align}\label{h1h}
 L_0g>0\quad \text{for}\,\,h>h_1.
 \end{align}

Next we will determine $ x_1$ large depending on $h_1$ and $B$ large depending on $ x_1.$
For any $X>x_1,$ we work in $[x_1,X]\times [0,+\infty).$ By Lemma \ref{add1}, we have
 \begin{align}\label{xxbipp}
 \partial_x^2 w=0\quad \text{on} \quad\psi=0,\quad \partial_x^2 w\rightarrow0\quad \text{as} \quad\psi\rightarrow+\infty,
\end{align}
and for $x_1$ and $B$ large enough,
\begin{align*}
    |\partial_x^2 w|\leq B\big(e^{-(x+1)^{-\frac{1}{2}}}\partial_\psi w+g\big)\quad \text{on} \quad x=x_1.
\end{align*}
Indeed, by Lemma \ref{lem:com} and \eqref{fu}, for some positive constants $c_{h_1}$ and $x_1$ depending on $h_1, $ it holds that for $h\in[0,h_1],\,\, x\in[x_1,+\infty),$
 \begin{align}\label{714}\begin{split}
 \partial_\psi w(x,\psi)\geq&\frac{1}{2}\partial_\psi \bar{w}(x,\psi)\\
 \geq&\frac{1}{\sqrt{x+1}}f''|_{\zeta=\frac{y(\psi;\bar{u})}{\sqrt{x+1}}}\geq
 \frac{c_{h_1}}{\sqrt{x+1}}.\end{split}
\end{align}
 Since  $|g|\leq\frac{1}{(x+1)^2}$ and
 \begin{align}\label{e-1}
 e^{-1}\leq e^{-(x+1)^{-\frac{1}{2}}}\leq 1,
\end{align}
we deduce that for $x_1$ large depending on $h_1,$ for some small positive constant $c$ depending on $ h_1,$ it holds
$$e^{-(x+1)^{-\frac{1}{2}}}\partial_\psi w+g\geq \frac{c}{\sqrt{x+1}},\quad h\in[0,h_1],\,\, x\in[x_1,+\infty).$$
 Then taking $B$ large depending on $x_1$ and $h_1 $, we get by Lemma \ref{add1} that
 \begin{align}\label{Bx1}
 |\partial_x^2 w|\leq B(e^{-(x+1)^{-\frac{1}{2}}}\partial_\psi w +g)\quad \text{on} \quad x=x_1.
 \end{align}

Now we work in the interior domain $(x_1,X]\times (0,+\infty)$. We derive the equations of $F=\partial_x^2 w$ and $\varphi +g$, where $\varphi= e^{-(x+1)^{-\frac{1}{2}}}\partial_\psi w.$ Direct calculations show that
\begin{align*}
&\partial_x F-\sqrt{w}\partial_{\psi}^2F
-\frac{3\partial_xw}{2w}F=-\frac{3(\partial_xw)^3}{4w^2}.
\end{align*}
Let
$$L_1v=\partial_x v-\sqrt{w}\partial_\psi^2 v-\frac{3\partial_xw}{2w}v.$$
Note $-\frac{3\partial_xw}{2w}\geq0.$ We infer that
\begin{align}\label{signf}
L_1(-F)\geq -\frac{3\partial_xw}{2w}\Big(-\frac{(\partial_xw)^2}{2w}\Big).
\end{align}
From Proposition \ref{52}, Lemma \ref{lem:com} and \eqref{wh}, we deduce that
for some positive constants $c$ and $C$ independent of $x_1,$
\begin{align}\label{flower}
\frac{ (\partial_xw)^2}{2w}\leq\frac{C}{(x+1)^2}\left\{
\begin{aligned}
h,\quad&h\leq 1,\\
1,\quad&1< h\le 2,\\
e^{-c\frac{\psi^2}{x+1}},\quad& 2<h.
\end{aligned}
\right.
\end{align}

By a direct calculation, we have
\begin{align*}
  \partial_x(   \partial_\psi w)-\sqrt{w}\partial_\psi^2(   \partial_\psi w)- \frac{\partial_xw}{2w}\partial_\psi w=0.
\end{align*}
Hence, for $\varphi= e^{-(x+1)^{-\frac{1}{2}}}\partial_\psi w$, by \eqref{e-1}, we get
\begin{align}\label{l1ph}
L_1\varphi\geq - \frac{\partial_xw}{w}e^{-1}\partial_\psi w
+\frac{\frac{1}{2} e^{-1}}{(x+1)^{\frac{1}{2}+1}}\partial_\psi w
\geq 0.
\end{align}
Then by \eqref{h1h}, \eqref{flower} and  \eqref{l1ph}, for $B>C$ and $h\in(h_1,+\infty),$
\begin{align*}
    L_1\big(B(\varphi+g)-F\big)> -\frac{3\partial_xw}{2w}\Big(B\frac{1}{(x+1)^2}e^{-\frac{\psi^2}{x+1}\varepsilon}-\frac{C}{(x+1)^2}e^{-c\frac{\psi^2}{x+1}}\Big)\geq0 ,
\end{align*}
and thus,
$$L_1\big(B(\varphi+g)-F\big)>0,\quad h>h_1,$$ where we used $\varepsilon\in(0,c).$

For $h\in(0,h_1-\frac{3}{2}\pi),$ by \eqref{714} and  \eqref{l1ph},
\begin{align*}
L_1\big(B(\varphi+g)\big)> - \frac{3\partial_xw}{2w}\frac{2B}{3e}\frac{c_{h_1}}{\sqrt{x+1}},
\end{align*} where we have used $
\partial_\psi w>0$ for $h\in(0,h_1).$
Hence, by \eqref{flower}, for $B$ large depending on $h_1$, it holds in $(0,h_1-\frac{3}{2}\pi)$ that
\begin{align*}
L_1(B(\varphi+g)-F)>0.
\end{align*}
 Next, for $h\in(h_1-\frac{3}{2}\pi,h_1),$
 \begin{align*}
   L_1 g= & L_1\Big(\frac{1}{(x+1)^2}\cos\big(\frac{\psi}{\sqrt{x+1}}-h_1\big)\Big)
   \geq  -\frac{C_{h_1}}{(x+1)^3}-\frac{3\partial_xw}{2w}(-\frac{1}{(x+1)^2})
\\ \geq &-\frac{C_{h_1}}{(x+1)^3} & \end{align*} where $C_{h_1}$ is a positive constant depending on $h_1.$

Hence, for $h\in(h_1-\frac{3}{2}\pi,h_1),$ by \eqref{714} and \eqref{l1ph} , we have
\begin{align*}
    L_1(\varphi+g)\geq - \frac{\partial_xw}{w}e^{-1}\partial_\psi w + \frac{c_{h_1}}{(x+1)^{\frac{1}{2}+\frac{3}{2}}} -\frac{C_{h_1}}{(x+1)^3}.
\end{align*}  Hence, for $x_1$ large depending on $h_1,$ by \eqref{714}, we have
\begin{align*}
    L_1\big(B(\varphi+g)\big)>- \frac{3\partial_xw}{2w}\frac{2B}{3e} \frac{c_{h_1}}{\sqrt{x+1}} ,\quad h\in(h_1-\frac{3}{2}\pi,h_1),\,\,x\in[x_1,+\infty).
\end{align*}
Therefore, by \eqref{flower}, for $B$ large depending on $h_1,$ we have
\begin{align*}
    L_1\big(B(\varphi+g)-F\big)>0 ,\quad h\in(h_1-\frac{3}{2}\pi,h_1),\,\,x\in[x_1,+\infty).
\end{align*}
In summary,  due to the sign of $L_1(Bg-F)$ in $0<h<h_1-\frac{3}{2}\pi$, $h_1-\frac{3}{2}\pi<h<h_1$ and $h>h_1$, $-F +B(\varphi+g)$ cannot achieve a negative minimum in $[x_1,X]\times [0,+\infty)\setminus \{h=h_1-\frac{3}{2}\pi,h_1\}.$  Moreover, since $h=h_1-\frac{3}{2}\pi$ and $h=h_1$ are two ridges of $g$, a minimum of $-F +B(\varphi+g)$ cannot be achieved at $h=h_1-\frac{3}{2}\pi$ and $h=h_1.$ Then we conclude $-F +B(\varphi+g)\geq 0.$

Similarly, we can prove $F +B(\varphi+g)\geq 0$ by following the computations above and using \eqref{flower}.
\end{proof}

\begin{proposition}\label{Tm73}
For any $\alpha\in (0,\frac{1}{8}),$ there exist large positive constants $B,\,h_1,x_1$and small positive constants $h_0,\varepsilon$ such that
\begin{align}\label{x1decay}
    |\partial_x^2 w(x,\psi)|\leq  B g \quad \text{in} \quad [x_1,+\infty)\times[0,+\infty),
\end{align}  where  $$ g(x,\psi)=\frac{1}{(x+1)^\frac{1}{2}}\left\{
\begin{aligned}
h^{1-\alpha},\quad&h\leq h_0,\\
h_0^{1-\alpha},\quad&h_0\le h\le h_1,\\
h_0^{1-\alpha} e^{h_1^2\varepsilon}e^{-\frac{\psi^2}{x+1}\varepsilon},\quad&h\geq h_1.
\end{aligned}
\right.
$$
\end{proposition}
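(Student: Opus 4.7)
The strategy is to run the same barrier argument as in Proposition \ref{Tm73pp}, but with the simpler three‑region barrier $Bg$ whose new feature is the factor $h^{1-\alpha}$ in Region~I, which makes $g$ vanish at $\psi=0$ in a way matching the trivial boundary value of $\pa_x^2w$. Writing $F=\pa_x^2w$ and $L_1v=\pa_xv-\sqrt{w}\pa_\psi^2 v-\frac{3\pa_xw}{2w}v$, we already have the identity $L_1F=-\frac{3(\pa_xw)^3}{4w^2}$ and the sign $-\frac{3\pa_xw}{2w}\geq 0$ from the proof of Proposition \ref{Tm73pp}. Both $F$ and $g$ vanish at $\psi=0$ and tend to $0$ as $h\to\infty$, so by the maximum principle it suffices to (a) dispose of the data on $\{x=x_1\}$, (b) show $L_1(Bg\pm F)>0$ in the interior of each of the three regions, and (c) check that $g$ is a genuine ridge at $h=h_0$ and at $h=h_1$.

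For the data at $x=x_1$, I would combine Lemma \ref{add1} with the smoothness estimates of \cite{WZ19}, which give $|F|\leq C_{x_1}\psi$ near $\psi=0$; comparing with $g\sim\psi^{1-\alpha}(x_1+1)^{\alpha/2-1}$ on $\{h\leq h_0\}$ forces $B$ to depend on $x_1$, $h_0$ and $\alpha$, which is permitted. In Regions~II and III the bound of Lemma \ref{add1} already matches $g$ up to enlarging $B$, provided $\varepsilon$ is chosen smaller than the constant $c$ appearing in Lemma \ref{add1}.

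The central step is the computation of $L_1g$. In Region~I, $g=\psi^{1-\alpha}(x+1)^{\alpha/2-1}$, so $-\sqrt{w}\pa_\psi^2g=\alpha(1-\alpha)\sqrt{w}\,\psi^{-\alpha-1}(x+1)^{\alpha/2-1}>0$; using $\sqrt{w}\gtrsim\psi^{1/2}(x+1)^{-1/4}$ from Lemma \ref{lem:com} and \eqref{wh}, the ratio $|\pa_xg|/|\sqrt{w}\pa_\psi^2g|$ is $O(h^{3/2})$, so $L_1g>0$ once $h_0$ is small. For the source term, Proposition \ref{52} gives $|\pa_xw|\lesssim -\pa_x\bar w\lesssim h/(x+1)$ and $w\sim h$ in Region~I, hence $|L_1F|\lesssim h/(x+1)^3$; comparing with $-\sqrt{w}\pa_\psi^2g\gtrsim\alpha\,\psi^{-\alpha-1/2}(x+1)^{\alpha/2-5/4}$ and using $\psi\leq h_0\sqrt{x+1}$ reduces positivity of $L_1(Bg-F)$ to $B\alpha(x+1)^{3/2}\gtrsim h_0^{3/2+\alpha}$, which holds for $x\geq x_1$ large. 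In Region~II, $g$ is independent of $\psi$, so $\pa_\psi^2g=0$ and $|\pa_xg|\leq Cg/(x+1)$, while the damping term $-\frac{3\pa_xw}{2w}(Bg)\geq 0$ absorbs $|L_1F|\lesssim (x+1)^{-3}$ obtained from \eqref{flower}, after choosing $B$ large in terms of $h_0$. In Region~III, $g$ is the same Gaussian as in Proposition \ref{Tm73pp}, so choosing $\varepsilon h_1^2$ sufficiently large gives $L_0g>0$ exactly as there, and the extra damping only helps.

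The ridge analysis at $h=h_0$ and $h=h_1$ follows by matching values and checking that the left one‑sided $\pa_\psi$ of $g$ exceeds the right one at each ridge; at $h=h_0$ this is immediate since $g$ is strictly increasing on the left and constant on the right, and at $h=h_1$ it follows from $\pa_\psi e^{-\psi^2\varepsilon/(x+1)}<0$. The main obstacle in the whole scheme is the balancing in Region~I between the coercivity $-\sqrt{w}\pa_\psi^2g$, the residual transport $\pa_xg$, and the nonlinear source $(\pa_xw)^3/w^2$: the restriction $\alpha\in(0,\tfrac18)$ in the statement is precisely what makes these three contributions balance with room to spare, so that $h_0,h_1,x_1$ and $B$ can be chosen in the correct order to close the barrier inequality.
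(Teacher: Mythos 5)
Your Regions I and III, the handling of the data at $x=x_1$, and the ridge checks at $h=h_0$ and $h=h_1$ are essentially the paper's argument. The genuine gap is Region II. There your barrier is $g=h_0^{1-\alpha}(x+1)^{-1/2}$, so $\partial_\psi^2 g=0$ and $\partial_x g=-\tfrac12 h_0^{1-\alpha}(x+1)^{-3/2}<0$, hence $L_1(Bg)=B\partial_xg-\frac{3\partial_xw}{2w}Bg$. The damping coefficient $-\frac{3\partial_xw}{2w}$ is only known to be nonnegative (from concavity of $u$); nothing in Lemma \ref{lem:com}, Lemma \ref{lem:com-w} or Proposition \ref{52} gives a positive lower bound for $-\partial_xw$, so the nonnegative damping term cannot be guaranteed to dominate the strictly negative transport term $B\partial_x g$ of size $(x+1)^{-3/2}$, and enlarging $B$ rescales both terms, so it does not help. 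Your remark that the damping ``absorbs'' the source is fine as far as it goes, since $L_1F=-\frac{3\partial_xw}{2w}\cdot\frac{(\partial_xw)^2}{2w}$ shares the damping factor and $Bg\geq \frac{(\partial_xw)^2}{2w}$ there, but it does not address $\partial_xg$. Consequently $L_1(Bg\pm F)>0$ cannot be concluded in $\{h_0\le h\le h_1\}$, and a negative minimum of $Bg\pm F$ in that strip is not excluded, so the maximum-principle scheme does not close.

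The paper circumvents exactly this difficulty: in the middle region it makes no differential-inequality argument at all, but invokes the second conclusion \eqref{x1decaypp} of Proposition \ref{Tm73pp}, namely $|\partial_x^2w|\le B_2(x+1)^{-1/2}$ for $h\in[0,h_2]$ and $x\ge x_2$, which after taking $x_1$ large and $B\ge B_2h_0^{\alpha-1}$ gives the pointwise bound $|\partial_x^2 w|\le Bg$ on $\{h_0\le h\le h_1\}$ directly; this is also why Proposition \ref{Tm73pp} carries the extra comparison function $e^{-(x+1)^{-1/2}}\partial_\psi w$ and the cosine piece, which generate a quantitative positive term of size $(x+1)^{-2}$ in the middle region that your constant-in-$\psi$ barrier lacks. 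Your proof becomes correct if in Region II you drop the barrier inequality and instead cite \eqref{x1decaypp} as the paper does. A minor further point: in Region III you should also dominate the source for the sign $Bg-F$, which follows from \eqref{flower} together with $\varepsilon<c$ and $B$ large depending on $h_0$, rather than from the sign of the damping alone.
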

\begin{proof}
For any $X$ such that $X>x_1,$ we work in $[x_1,X]\times [0,+\infty).$
We first determine $h_0\in(0,1)$ and $h_1.$
 By \eqref{wh}, we have
 $$ c\frac{\psi^{\frac{1}{2}}}{(x+1)^{\frac{1}{4}}}\leq \sqrt{w}\leq C\frac{\psi^{\frac{1}{2}}}{(x+1)^{\frac{1}{4}}}, \quad h\leq 1,$$
 for some constants $c$ and $C$ independent of $x_1.$ Hence,
 \begin{align*}
  L_0\Big(\frac{\psi^{1-\alpha}}{(x+1)^{\frac{1-\alpha}{2}+\frac{1}{2}}}\Big)
  &=\frac{\psi^{1-\alpha}}{(x+1)^{\frac{1-\alpha}{2}+\frac{1}{2}}}\Big[-(\frac{1-\alpha}{2}+\frac{1}{2})
  \frac{1}{x+1}+\sqrt{w}\alpha(1-\alpha)\psi^{-2}\Big]\\
  &\geq\frac{\psi^{-1-\alpha}}{(x+1)^{\frac{1-\alpha}{2}+\frac{1}{2}}}
  [-
  h^2+ch^{\frac{1}{2}}\alpha(1-\alpha)].
 \end{align*}
 Hence, we fix $h_0$ small depending on $\alpha$ such that
\begin{align*}
 L_0g>0,\quad 0<h< h_0.
 \end{align*}
Therefore, for $B>C$ where $C$ is the constant in \eqref{flower}, we get
 \begin{align}\label{Lgh0}
 L_1(Bg)>-\frac{3\partial_xw}{2w}\Big(\frac{(\partial_xw)^2}{2w}\Big),\quad 0<h< h_0.
 \end{align}

Next we determine $\varepsilon\in(0,c)$ where $c$ is the constant in Lemma \ref{add1} and then we determine $h_1$.
Since $w\leq C$ for some positive constant $C$ independent of $h_1$ and $x_1,$ taking  $\varepsilon$ to be a small positive constant  independent of $h_1$ and $x_1$, we have
\begin{align*}
L_0\Big(\frac{1}{\sqrt{x+1}}e^{-\frac{\psi^2}{x+1}\varepsilon}\Big)&=\frac{1}{\sqrt{x+1}}e^{-\frac{\psi^2}{x+1}\varepsilon}\Big[-\frac{1}{2}\frac{1}{x+1}+\varepsilon\frac{\psi^2}{(x+1)^2}
 -\sqrt{w}\Big(-\varepsilon\frac{2}{x+1}+\varepsilon^2\frac{4\psi^2}{(x+1)^2}\Big)\Big]
\\&\geq \frac{1}{\sqrt{x+1}}e^{-\frac{\psi^2}{x+1}\varepsilon}\Big(-\frac{1}{2}\frac{1}{x+1}+\frac{\varepsilon}{2}\frac{\psi^2}{(x+1)^2}\Big).
\end{align*}
This implies that for $h\geq h_1,$
\begin{align*}
L_0(\frac{1}{\sqrt{x+1}}e^{-\frac{\psi^2}{x+1}\varepsilon})&\geq \frac{1}{\sqrt{x+1}}e^{-\frac{\psi^2}{x+1}\varepsilon}(-\frac{1}{2}\frac{1}{x+1}+\frac{\varepsilon}{2}\frac{h_1^2}{x+1}
 ) .\end{align*}
 Fix $h_1$ large independent of $x_1$ such that $\frac{\varepsilon h_1^2}{2}>1,$ and thus, $L_0g>0$ for $h_1< h$.
 Then by \eqref{flower}, for $B$ large depending on $\alpha$ and $h_0$, we have
 \begin{align}\label{Lgh1}
    L_1(Bg) >-\frac{3\partial_xw}{2w}\big(\frac{(\partial_xw)^2}{2w}\big),\quad h> h_1.
 \end{align}

 By \eqref{x1decaypp}, taking $x_1$ large depending on
 $h_1$ and $B$ large depending on
 $h_1$ and $h_0^{1-\alpha},$ we have
 $$|\pa_x^2w|\leq Bg,\quad h\in[h_0,h_1],\,\,x\in[x_1,+\infty) .$$
Finally, we consider the initial and the boundary data. By Lemma \ref{add1}, we have
 \begin{align}\label{xxbi}
 \partial_x^2 w=0\quad on \quad\psi=0,\quad \partial_x^2 w\rightarrow0\quad as \quad\psi\rightarrow+\infty,\quad |\partial_x^2 w|\leq Cg \quad on \quad x=x_1,
\end{align} where $ C$ is a positive constant depending on $\alpha, h_1, x_1, h_0$ and $\varepsilon$.
Take $B\geq C.$ Then $|\partial_x^2 w|\leq Bg$ on $x=x_1$.

Summing up,  we have $L_1(Bg+F)>0$ in $[x_1,X]\times [0,+\infty)\setminus \{h=h_0, h=h_1\}$, and thus $\partial_x^2 w +Bg$ cannot achieve a negative minimum in $[x_1,X]\times [0,+\infty)\setminus \{h=h_0, h=h_1\}.$ Moreover, since $h=h_0, h=h_1$ are two ridges,  a minimum  of
$Bg+F$ cannot be achieved on $h=h_0$ and $h=h_1$. Then we conclude $Bg+F\geq 0.$

Similarly, we can prove $Bg-F\geq 0.$
 \end{proof}

 \begin{remark}(1) Due to the change of the structure of the equation, a key difficulty in this subsection is how to derive good terms without using the good term $A$ as before. (2)  It is necessary to distinguish which terms depend on $x_1.$\end{remark}

 \subsection{Proof of Theorem \ref{thm:w-decay}}

 First of all, we infer from Proposition \ref{Tm73} that
 \begin{align*}
 |\partial_x^2 w|\leq \frac{C}{(x+1)^{\frac{1}{2}}}e^{-c\frac{\psi^2}{x+1} }.
\end{align*}
Note that
  \begin{align*}
   \partial_x\partial_\psi^2 w=\partial_x(\frac{\partial_x w}{\sqrt{w}})=\frac{\partial_x^2 w}{\sqrt{w}}-\frac{1}{2}\frac{(\partial_x w)^2}{w^{\frac{3}{2}}}.
\end{align*}
Then by  Lemma \ref{lem:com}, \eqref{wh}, Proposition \ref{52} and Proposition \ref{Tm73}, we get
\begin{align}\nonumber
 |\partial_x\partial_\psi^2 w|\leq \frac{C}{(x+1)^{\frac{1}{2}}}e^{-c\frac{\psi^2}{x+1}},\quad (x,\psi)\in (N,+\infty)\times (0,+\infty).
\end{align}

Following the argument in section 4.7 but taking $\sigma_x=(x+1)^{-\frac{1}{4}}$, we can show that
\begin{align}\nonumber
  |\partial_{\psi x} w| \leq \frac{C}{(x+1)^{\frac{3}{4}}}e^{-c\frac{\psi^2}{x+1} },\quad (x,\psi)\in (N,+\infty)\times (0,+\infty).
\end{align}

\section{Decay estimates of high order derivatives of $u$}\label{yue}

In this section, we prove Theorem \ref{thm:decay1}, which is a direct consequence of the following Proposition \ref{prop:u-py}, Proposition \ref{prop:u-px} and Proposition \ref{prop:u-pxy}.

\subsection{Decay estimates of $\pa_y u$ and $\pa_y^2u$}

\begin{proposition}\label{prop:u-py}
There exist positive constants $c$ and $ C$ such that for any $(x,y)\in \R_+\times \R_+$,
\begin{align}\label{28}
  -\frac{C}{x+1}e^{-c\frac{y^2}{x+1}}\leq \partial_y^2u(x,y)\leq 0,\end{align} and \begin{align*}
|\partial_y (u(x,y)-\bar{u}(x,y))|\leq \frac{C}{(x+1)^{\frac{3}{4}}}\ln(x+e)e^{-c\frac{y^2}{x+1} }.
\end{align*}
\end{proposition}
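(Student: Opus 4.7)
The strategy is to translate the Von Mises estimates of Theorem \ref{thm:decay0-V}, together with the Blasius formulas of Section \ref{Prl}, back to the Euler coordinates via the two identities
\begin{equation*}
2\partial_y u(x,y) = \partial_\psi w(x,\psi),\qquad 2\partial_y^2 u(x,y) = \sqrt{w}\,\partial_\psi^2 w(x,\psi) = \partial_x w(x,\psi),
\end{equation*}
where $\psi=\int_0^y u(x,y')\,dy'$, and to exploit the comparison $c\,y(\psi;\bar u)\le y\le C\,y(\psi;\bar u)$ from \eqref{yyyorder} to pass the exponential decay from $\zeta$ and $\psi^2/(x+1)$ to $y^2/(x+1)$.

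First, the upper bound $\partial_y^2 u\le 0$ is already contained in Lemma \ref{prop:uyy-upper} (this is the reason we imposed the concavity assumption on the initial data). For the lower bound, write $\partial_x w=\partial_x\bar w+\partial_x\phi$. The explicit identity $-\partial_x\bar w=\frac{ff''(\zeta)}{x+1}$ from \eqref{fu} together with \eqref{blainf} yields $|\partial_x\bar w|\le \frac{C}{x+1}e^{-c\zeta^2}$; the decay estimate $|\partial_x\phi|\le\frac{C}{x+1}e^{-c\psi^2/(x+1)}$ is Theorem \ref{thm:decay0-V}. Using \eqref{yyyorder} we have $\zeta\sim y/\sqrt{x+1}$, so $e^{-c\zeta^2}\le Ce^{-c'y^2/(x+1)}$. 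For the $\psi$-term we split according to whether $y\le \sqrt{x+1}$ (where both $e^{-c\psi^2/(x+1)}$ and $e^{-cy^2/(x+1)}$ are bounded below by positive constants, so the bound is trivial) or $y\ge \sqrt{x+1}$ (where $\bar u\to 1$ forces $\psi\sim y$, hence $e^{-c\psi^2/(x+1)}\le Ce^{-c'y^2/(x+1)}$); this gives the required lower bound on $\partial_y^2 u$.

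For the second estimate, set $\bar\psi=\int_0^y \bar u(x,y')\,dy'$ and decompose
\begin{equation*}
2\bigl(\partial_y u-\partial_y \bar u\bigr)=\partial_\psi w(x,\psi)-\partial_\psi \bar w(x,\bar\psi)=\partial_\psi\phi(x,\psi)+\bigl[\partial_\psi\bar w(x,\psi)-\partial_\psi\bar w(x,\bar\psi)\bigr].
\end{equation*}
The first piece is handled by Theorem \ref{thm:decay0-V}, giving $|\partial_\psi\phi(x,\psi)|\le\frac{C}{(x+1)^{3/4}}e^{-c\psi^2/(x+1)}\le\frac{C}{(x+1)^{3/4}}e^{-c'y^2/(x+1)}$. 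For the second piece the mean value theorem gives $|\partial_\psi\bar w(x,\psi)-\partial_\psi\bar w(x,\bar\psi)|\le |\partial_\psi^2\bar w(x,\xi)|\,|\psi-\bar\psi|$. Theorem \ref{thm:decay0} yields $|u-\bar u|\le\frac{C\ln(x+e)}{\sqrt{x+1}}e^{-cy'^2/(x+1)}$, and integrating in $y'$ produces $|\psi-\bar\psi|\le C\ln(x+e)$. The formula $\partial_\psi^2\bar w=\frac{2f'''(\zeta)}{(x+1)f'(\zeta)}$ combined with Lemma \ref{lem:f} and \eqref{blainf} gives $|\partial_\psi^2\bar w|\le \frac{C}{x+1}e^{-c\zeta^2}$ uniformly.

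The contribution of the second piece is therefore $\le\frac{C\ln(x+e)}{x+1}e^{-c\zeta(\xi)^2}$. When $y\le C\sqrt{x+1}$ the exponential is bounded below by a constant and $\frac{1}{x+1}\le\frac{1}{(x+1)^{3/4}}e^{-cy^2/(x+1)}$ trivially; when $y\ge C\sqrt{x+1}$ we have $\bar u\sim 1$ near this altitude, so the shift $|\xi-\psi|\le C\ln(x+e)$ produces only a shift $|y(\xi;\bar u)-y|\le C\ln(x+e)$ in the corresponding Blasius height, and hence $\zeta(\xi)\sim y/\sqrt{x+1}$, yielding $e^{-c\zeta(\xi)^2}\le Ce^{-c'y^2/(x+1)}$. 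In both cases the second piece is dominated by $\frac{C\ln(x+e)}{(x+1)^{3/4}}e^{-c'y^2/(x+1)}$, which combines with the first to give the claimed estimate.

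The main technical obstacle is the bookkeeping of coordinate changes: the estimates from Theorems \ref{thm:decay0} and \ref{thm:decay0-V} produce exponential factors in $\psi^2/(x+1)$, $\zeta^2$, and $y^2/(x+1)$, and these must be reconciled through the regime split $y\lessgtr \sqrt{x+1}$ together with \eqref{yyyorder} and the fact that a shift of size $\ln(x+e)$ in $\psi$ translates to a negligible shift in $\zeta$ whenever $\zeta$ is bounded away from $0$. No new barrier functions are needed; all the hard maximum-principle work has already been absorbed into Theorems \ref{thm:decay0}, \ref{thm:decay0-V}, and Lemma \ref{prop:uyy-upper}.
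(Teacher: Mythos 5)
Your proof is correct, and for the second estimate it takes a genuinely different route from the paper's. The paper proves $\partial_y(u-\bar u)$ by an interpolation trick entirely in Euler coordinates: set $\hat y=y+(x+1)^{1/4}$, apply the mean value theorem to $\varphi=u-\bar u$ to get a point $y_1\in(y,\hat y)$ with $\partial_y\varphi(x,y_1)=(\varphi(x,\hat y)-\varphi(x,y))/(x+1)^{1/4}$, bound that by Theorem \ref{thm:decay0}, and then control $|\partial_y\varphi(x,y)-\partial_y\varphi(x,y_1)|$ by $(x+1)^{1/4}$ times the $\partial_y^2$ bound from \eqref{28} and the explicit $\partial_y^2\bar u$. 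Balancing the two pieces gives the $(x+1)^{-3/4}\ln(x+e)$ rate. You instead go back to Von Mises variables and write $2\partial_y(u-\bar u)=\partial_\psi\phi(x,\psi)+[\partial_\psi\bar w(x,\psi)-\partial_\psi\bar w(x,\bar\psi)]$, using the already-established $\partial_\psi\phi$ bound from Theorem \ref{thm:decay0-V} for the first piece and a mean-value/shift argument in the stream variable for the second, with the $\ln(x+e)$ factor entering through $|\psi-\bar\psi|\le C\ln(x+e)$ from Theorem \ref{thm:decay0}. Both routes are sound; the paper's interpolation is shorter and stays in Euler variables, while yours reuses the $\partial_\psi\phi$ estimate (which, one should note, was itself obtained by the same interpolation trick in Section \ref{lat}) and makes explicit the small but nontrivial bookkeeping between $\psi$, $\bar\psi$, and $y$. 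One small point to keep in mind when writing this up cleanly: in your estimate of $|\partial_\psi^2\bar w(x,\xi)|$ you should verify that $f'''/f'$ is bounded near $\zeta=0$; this follows because $f'''=-\tfrac12 ff''\sim -\tfrac{b_0^2}{4}\zeta^2$ while $f'\sim b_0\zeta$, so the ratio is $O(\zeta)$, consistent with the cancellations recorded in Lemma \ref{lem:f}.
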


\begin{proof}
By \eqref{PvM}, we have
\begin{align}\nonumber
2\partial^2_{y} u(x,y)=\sqrt{w}\partial^2_{\psi}w(x,\psi(x,y))=\partial_{x}w(x,\psi(x,y)),
\end{align}
where $\psi(x,y)=\int_0^yu(x,y')dy'.$
It follows from Proposition \ref{52} that
\begin{align}\nonumber
\begin{split}
   0&\geq\partial_{x}w|_{(x,\psi)}\geq C\partial_{x}\bar{w}|_{(x,\psi)}\\ &\geq
   -\frac{C}{x+1}ff''|_{\zeta=\frac{y(\psi;\bar{u})}{\sqrt{x+1}}}\geq
   -\frac{C}{x+1}e^{-C_1\zeta^2}|_{\zeta=\frac{y(\psi;\bar{u})}{\sqrt{x+1}}}
   \\&\geq -\frac{C}{x+1}e^{-c\frac{(y(\psi;u))^2}{x+1}},
   \end{split}
\end{align}
where we used \eqref{yyyorder}. Hence, we have \eqref{28}.

For any fixed $(x,y)\in \R_+\times \R_+$, take $$\hat{y}=y+(x+1)^{\frac{1}{4}}>y.$$ Set $\sigma_x=(x+1)^{\frac{1}{4}}$ and $\varphi(x,y)=u(x,y)-\bar{u}(x,y).$ Then $-y+\hat{y}=\sigma_x.$
By the mean value property, there exists a point $y_1\in(y,\hat{y})$ such that
$$\partial_y \varphi(x,y_1) =\frac{\varphi(x,\hat{y})-\varphi(x,y)}{\sigma_x}.$$
Since $\hat{y}>y,$ we get by Theorem \ref{thm:decay0} that
$$|\partial_y \varphi(x,y_1) | \leq\frac{2 \frac{C}{\sqrt{x+1}}e^{-c\frac{y^2}{x+1} }}{\sigma_x}\ln(e+x).$$
Then by $y_1>y$ and  \eqref{28}, we deduce that
\begin{align*}
  |\partial_y \varphi(x,y)|&\leq |\partial_y \varphi(x,y) -\partial_y \varphi(x,y_1) |+|\partial_y \varphi(x,y_1) |\\
  &\leq C\frac{1}{x+1} e^{-c\frac{y^2}{x+1}}\sigma_x
  +\frac{2 \frac{C}{\sqrt{x+1}}e^{-c\frac{y^2}{x+1} }}{\sigma_x}\ln (x+e)
  \\&\leq C\frac{1}{(x+1)^{\frac{3}{4}}}\ln(x+e)e^{-c\frac{y^2}{x+1}}.
\end{align*}
Since $y$ is arbitrarily chosen, we obtain the desired result.
\end{proof}

\subsection{Decay estimate of $\pa_x u$}

\begin{proposition}\label{prop:u-px}
There exist positive constants $c$ and $  C$ such that for any $(x,y)\in \R_+\times \R_+$,
$$|\partial_x (u(x,y)-\bar{u}(x,y))|\leq \frac{C}{x+1}e^{-c\frac{y^2}{x+1}}.$$
\end{proposition}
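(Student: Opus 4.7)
The plan is to translate the already-proven Von Mises estimates of Theorem~\ref{thm:decay0-V} and the Euler estimates of Proposition~\ref{prop:u-py} into a pointwise bound on $\partial_x u - \partial_x \bar u$, via the Prandtl equation. Rather than manipulating the difference directly (which would produce a singular $1/u - 1/\bar u$ contribution near $y=0$), I would bound $|\partial_x u|$ and $|\partial_x\bar u|$ separately by $\frac{C}{x+1}e^{-cy^2/(x+1)}$ and then invoke the triangle inequality. For the Blasius profile, the explicit formula
\begin{align*}
 \partial_x \bar u = -\frac{y}{2(x+1)^{3/2}}\, f''(\zeta)
\end{align*}
combined with \eqref{blainf} yields this bound immediately, since $\zeta\mapsto\zeta e^{-c\zeta^2}$ is uniformly bounded.

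For $\partial_x u$ I would split the half-plane into $y\geq y_0$ and $0<y<y_0$ for some fixed small $y_0>0$. In the region $y\geq y_0$, Lemma~\ref{lem:com} forces $u(x,y)\geq c>0$, so the Prandtl relation $\partial_x u = (\partial_y^2 u - v\,\partial_y u)/u$ gives
\begin{align*}
 |\partial_x u|\leq \tfrac{1}{c}\bigl(|\partial_y^2 u|+|v|\,|\partial_y u|\bigr).
\end{align*}
Proposition~\ref{prop:u-py} supplies $|\partial_y^2 u|\leq \frac{C}{x+1}e^{-cy^2/(x+1)}$, while $|\partial_y u|\leq |\partial_y \bar u|+|\partial_y(u-\bar u)|\leq \frac{C}{\sqrt{x+1}}e^{-cy^2/(x+1)}$ combines the explicit Blasius bound with Proposition~\ref{prop:u-py}. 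Together with a uniform bound $|v(x,y)|\leq C/\sqrt{x+1}$, this yields the desired Gaussian estimate in this region. In the region $0<y<y_0$ the factor $e^{-cy^2/(x+1)}$ is bounded below, so it is enough to prove $|\partial_x u|\leq C/(x+1)$; for this I would use the Von Mises identity
\begin{align*}
 \partial_x u = \tfrac{1}{2}\partial_\psi^2 w\big|_{\psi(x,y)} - v\,\tfrac{\partial_y u}{u},
\end{align*}
where the first term is controlled by $|\partial_\psi^2\bar w|+|\partial_\psi^2\phi|\leq \frac{C}{x+1}$ via the explicit Blasius formula and Theorem~\ref{thm:decay0-V}. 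The Taylor expansion using $v(x,0)=0$ together with $\partial_y v(x,0)=-\partial_x u(x,0)=0$ gives the refined near-boundary bound $|v|\leq Cy^2/(x+1)^{3/2}$, which combines with $|\partial_y u/u|\leq C/y$ to produce $|v\,\partial_y u/u|\leq Cy/(x+1)^{3/2}\leq C/(x+1)$ for $y\leq y_0$ and $x$ sufficiently large.

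The main obstacle is the uniform bound $|v(x,y)|\leq C/\sqrt{x+1}$ needed in the outer region; this does not follow directly from any lemma quoted in the excerpt. It can be recovered either from Oleinik's classical theory (cf.\ Section~\ref{Pw}), or via a continuity/bootstrap argument: the candidate bound $|\partial_x u|\leq \frac{C}{x+1}e^{-cy^2/(x+1)}$ integrated in $y'$ yields exactly $|v|\leq C/\sqrt{x+1}$ by the standard Gaussian integral estimate, and the two bounds close consistently. Small-$x$ values are handled by the continuous dependence of the Oleinik solution on $x$ on compact sets. Once the $v$ bound is in place, combining the separate estimates on $|\partial_x u|$ and $|\partial_x\bar u|$ through the triangle inequality gives the claimed pointwise decay of $\partial_x(u-\bar u)$.
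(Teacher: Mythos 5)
Your overall strategy (bound $|\partial_x u|$ and $|\partial_x\bar u|$ separately via the Prandtl relation $\partial_x u=(\partial_y^2u-v\,\partial_yu)/u$) is different from the paper, which instead differentiates the composition identity $u(x,y)^2=w(x,\psi_2(x,y))$ and estimates each Von Mises term, but as written your argument has a genuine gap in the region splitting. The claim that Lemma \ref{lem:com} forces $u(x,y)\geq c>0$ for $y\geq y_0$ is false: the comparison is with the Blasius profile, and $\bar u(x,y_0)=f'(y_0/\sqrt{x+1})\sim b_0y_0/\sqrt{x+1}\to 0$ as $x\to\infty$, so the lower bound on $u$ at fixed $y$ degenerates. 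The correct split is in the self-similar variable $\zeta=y/\sqrt{x+1}$ (or $h$), and in the intermediate regime $y_0\leq y\ll\sqrt{x+1}$ the ingredients you quote are too coarse: with only $|\partial_y^2u|\leq\frac{C}{x+1}e^{-cy^2/(x+1)}$ from Proposition \ref{prop:u-py}, $|v|\leq C/\sqrt{x+1}$ and $|\partial_yu|\leq C/\sqrt{x+1}$, dividing by $u\gtrsim \zeta$ yields only $|\partial_xu|\lesssim \frac{1}{(x+1)\zeta}=\frac{1}{\sqrt{x+1}\,y}$, which misses the target $\frac{C}{x+1}$ by a factor $\sqrt{x+1}$ when $y\sim 1$. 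To close this one needs the refined vanishing as $\zeta\to0$ that Proposition \ref{52} provides, namely $|\partial_xw|\lesssim -\partial_x\bar w=\frac{ff''}{x+1}\sim\frac{\zeta^2}{x+1}$ (hence $|\partial_y^2u|\lesssim\frac{\zeta^2}{x+1}$), together with a $v$-bound that also vanishes like $\zeta^2$; the coarse statement of Proposition \ref{prop:u-py} alone cannot do it.

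The second gap concerns $v$. Your near-boundary bound $|v|\leq Cy^2/(x+1)^{3/2}$ via Taylor expansion would require $|\partial_{xy}u|\lesssim(x+1)^{-3/2}$ uniformly in a strip, which is neither available at this point nor proved anywhere in the paper (the paper only obtains $(x+1)^{-3/4}$, and later); with the available $(x+1)^{-3/4}$ your inner estimate only yields $|v\,\partial_yu/u|\lesssim(x+1)^{-3/4}$, short of $\frac{C}{x+1}$. Likewise the outer bound $|v|\leq C/\sqrt{x+1}$ is left to an unspecified bootstrap. Both issues have a clean, non-circular fix that is exactly the paper's device: since $v=-\int_0^y\partial_xu\,dy'=-\partial_x\psi_2$, the Von Mises identity $\partial_x\psi_2=\frac{1}{2}\sqrt{w}\int_0^{\psi_2}w^{-3/2}\partial_{\tilde x}w\,d\psi$ together with Proposition \ref{52}, Lemma \ref{lem:com} and the Blasius formulas gives $|v|\lesssim\frac{1}{\sqrt{x+1}}e^{-cy^2/(x+1)}$ globally and $|v|\lesssim\frac{\zeta^2}{\sqrt{x+1}}$ for $\zeta\leq1$. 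With these refinements (and the $\partial_\psi\phi$ bound of Theorem \ref{thm:decay0-V}, which the paper also uses), your scheme can be repaired, but as written the key quantitative steps do not follow from the estimates you invoke.
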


\begin{proof}
We use $\partial_x$ to denote the derivative in Euler coordinates $(x,y)$ and $\partial_{\tilde{x}}$ to denote the derivative in  Von Mises coordinates $(\tilde{x},\psi).$ We denote
\begin{align*}
    h(x,y)=w(x,\psi_2(x,y))-\bar{w}(x,\psi_2(x,y)),\quad g(x,y)=\bar{w}(x,\psi_2(x,y)),
\end{align*}
where $\psi_2(x,y)=\int_0^y u(x,y')dy'.$ Here we note, by the composition, the independent variables of both $w(x,\psi_2(x,y))$ and $\bar{w}(x,\psi_2(x,y))$ are $x$ and $y.$
By the definition of $\psi_2(x,y),$ we have $$w(x,\psi_2(x,y))=u(x,y)^2,$$
and thus,
$$\partial_x(u^2-\bar{u}^2)=\partial_x h+\partial_x(g-\bar{u}^2).
$$
By a straight computation, we have
\begin{align*}
    \partial_x h|_{(x,y)}
=&(\partial_{\tilde{x}}w-\partial_{\tilde{x}}\bar{w})|_{(x,\psi)=(x,\psi_2(x,y))}
+\partial_x\psi_2|_{(x,y)}(\partial_\psi w-\partial_\psi\bar{w})|_{(x,\psi)=(x,\psi_2(x,y))},\\
\partial_x g|_{(x,y)}
=&\partial_{\tilde{x}}\bar{w}|_{(x,\psi)=(x,\psi_2(x,y))}
+\partial_x\psi_2|_{(x,y)}\partial_\psi\bar{w}|_{(x,\psi)=(x,\psi_2(x,y))},
\end{align*}
and
\begin{align*}
\partial_x(u^2-\bar{u}^2)=2u\partial_x u-2\bar{u}\partial_x\bar{u}=2u\partial_x (u-\bar{u})+2(u-\bar{u})\partial_x\bar{u}.
\end{align*}
Then we obtain
 \begin{align}\label{xu1}
 \begin{split}
    \partial_x (u-\bar{u})|_{(x,y)}=&\frac{1}{2u|_{(x,y)}}\Big[\partial_{\tilde{x}} \phi|_{(x,\psi)=(x,\psi_2(x,y))}+\partial_x\psi_2|_{(x,y)}\partial_{\psi} \phi|_{(x,\psi)=(x,\psi_2(x,y))}\\&+\partial_{\tilde{x}}\bar{w}|_{(x,\psi)=(x,\psi_2(x,y))}
+\partial_x\psi_2|_{(x,y)}\partial_\psi\bar{w}|_{(x,\psi)=(x,\psi_2(x,y))}\Big]
\\&-\frac{\bar{u}}{u}\partial_x\bar{u}|_{(x,y)}-\frac{1}{u}(u-\bar{u})\partial_x\bar{u}|_{(x,y)}
.\end{split}
\end{align}

Now we estimate each term on the right hand side of \eqref{xu1}.
First of all, we show that for some positive constants $c_9$ and $C_9$,
\begin{align}\label{ubaruxy}
c_9\bar{u}(x,y)\leq u(x,y)\leq C_9\bar{u}(x,y).
\end{align}

Since $cy(\psi;\bar{u})\leq y(\psi;u)\leq Cy(\psi;\bar{u})$ and $y(\psi_2(x,y);u)=y,$ we get by Lemma \ref{lem:com} that
\begin{align*}
    u(x,y)=&\sqrt{w}(x,\psi_2(x,y))\\ \leq & C\sqrt{\bar{w}}(x,\psi_2(x,y))= C\bar{u}(x,y(\psi_2(x,y);\bar{u}))
   \\ \leq& C\bar{u}(x,Cy(\psi_2(x,y);u))=C\bar{u}(x,Cy)\leq C_9\bar{u}(x,y),
\end{align*}
and
\begin{align*}
    u(x,y)=&\sqrt{w}(x,\psi_2(x,y))\\ \geq & c\sqrt{\bar{w}}(x,\psi_2(x,y))= c\bar{u}(x,y(\psi_2(x,y);\bar{u}))
   \\ \geq& c\bar{u}(x,cy(\psi_2(x,y);u))=c\bar{u}(x,cy)\geq c_9\bar{u}(x,y).
\end{align*}
This proves \eqref{ubaruxy}.

From \eqref{ubaruxy} and $\bar{u}(x,y)=f'(\frac{y}{\sqrt{x+1}})$, we infer that
\beno
 \frac{1}{u(x,y)}|\partial_x\bar{u}(x,y)|\leq \frac{C}{x+1}e^{-c\frac{y^2}{x+1}}.
 \eeno
 By Proposition \ref{52}, we have
\begin{align}\label{xuphi}\begin{split}
    \frac{1}{u(x,y)}|\partial_{\tilde x} \phi|_{(x,\psi)=(x,\psi_2(x,y))} |\leq & \frac{C}{f'}\frac{ff''}{x+1}|_{(x,y)=(x,y(\psi_2(x,y);\bar{u}))}
    \\ \leq& \frac{C}{x+1}e^{-C_1\frac{\big(y(\psi_2(x,y);\bar{u})\big)^2}{x+1}}\\
    \leq & \frac{C}{x+1}e^{-c\frac{\big(y(\psi_2(x,y);u)\big)^2}{x+1}}
    = \frac{C}{x+1}e^{-c\frac{y^2}{x+1}}.\end{split}
\end{align}

 By  Lemma \ref{lem:com}, $d\psi=\sqrt{x+1}f'(\zeta)d\zeta$ and Proposition \ref{52},
 we get
\begin{align*}
  |\partial_x\psi_2(x,y)|&=\Big|\frac{1}{2}\sqrt{w}\int_0^{\psi_2(x,y)}w^{-\frac{3}{2}}
   \partial_{\tilde x}w d\psi\Big|\\&\leq C\Big|\sqrt{\bar{w}}\int_0^{\psi_2(x,y)}\bar{w}^{-\frac{3}{2}}
   \partial_{\tilde x}\bar{w} d\psi\Big|\\
   &\leq C\sqrt{x+1}f' \int_0^{\zeta}(f')^{-2}\frac{1}{x+1}ff''
    d\tilde{\zeta}|_{\zeta=\frac{y(\psi_2(x,y);\bar{u})}{\sqrt{x+1}}}
 \\& \leq  \frac{C}{\sqrt{x+1}} e^{-c\frac{(y(\psi_2(x,y);\bar{u}))^2}{x+1} }
    \\&\leq  \frac{C}{\sqrt{x+1}} e^{-c\frac{(y(\psi_2(x,y);u))^2}{x+1} }= \frac{C}{\sqrt{x+1}} e^{-c\frac{y^2}{x+1} }. \end{align*}
By \eqref{eq:psi-decay} and \eqref{zeps},  we get
$$
|\partial_\psi \phi |_{(x,\psi)=(x,\psi_2(x,y))}|\leq \frac{C}{(x+1)^{\frac{3}{4}}}
e^{-c\frac{(y(\psi_2(x,y);\bar u))^2}{x+1} }\le \frac{C}{(x+1)^{\frac{3}{4}}}e^{-c\frac{y^2}{x+1} }.
$$

Summing up the estimates above and using \eqref{fu}, we can conclude our result.
\end{proof}

\subsection{Decay estimate of $\pa_x\pa_y u$}

\begin{proposition}\label{prop:u-pxy}
There exist positive constants $c$, $ C$ and $N$ such that for any $(x,y)\in (N,+\infty)\times \R_+$,
\begin{align*}
    |\partial_{xy} u(x,y)|\leq \frac{C}{(x+1)^{\frac{3}{4}}}e^{-c\frac{y^2}{x+1}}.
\end{align*}
\end{proposition}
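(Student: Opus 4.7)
The plan is to mimic the strategy of Proposition \ref{prop:u-py} and Proposition \ref{prop:u-px}: read off $\partial_{xy}u$ from the Von Mises identity $2\partial_y u(x,y)=\partial_\psi w(x,\psi_2(x,y))$ from \eqref{PvM}, where $\psi_2(x,y)=\int_0^y u(x,y')\,dy'$, and then translate the decay estimates for $w$ in Von Mises coordinates back to Euler coordinates.

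First I would differentiate this identity in $x$ (Euler coordinates) using the chain rule to obtain
\begin{align*}
2\partial_{xy}u(x,y)=(\partial_{\tilde x}\partial_\psi w)|_{(x,\psi_2(x,y))}+\partial_x\psi_2(x,y)\cdot(\partial_\psi^2 w)|_{(x,\psi_2(x,y))},
\end{align*}
and then estimate the two pieces separately. The dominant piece is the first: by Theorem \ref{thm:w-decay} we have $|\partial_{\psi x}w(x,\psi)|\leq\frac{C}{(x+1)^{3/4}}e^{-c\psi^2/(x+1)}$ for $x>N$, so after evaluating at $\psi=\psi_2(x,y)$ it only remains to transfer the Gaussian weight from $\psi$ to $y$. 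I would do this by the same dichotomy used in the step \eqref{xuphi}: for $\zeta=y(\psi_2;\bar u)/\sqrt{x+1}\geq M$ one uses \eqref{zeps} together with \eqref{yyyorder} to bound $\psi_2(x,y)^2/(x+1)$ from below by $cy^2/(x+1)$, while for $\zeta\leq M$ both Gaussians are bounded and unbounded below by positive constants so the conversion is trivial.

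For the second piece, I would borrow the estimate $|\partial_x\psi_2(x,y)|\leq\frac{C}{\sqrt{x+1}}e^{-cy^2/(x+1)}$ that was already proved inside Proposition \ref{prop:u-px}, and combine it with a bound $|\partial_\psi^2 w|\leq \frac{C}{x+1}e^{-c\psi^2/(x+1)}$ obtained by writing $\partial_\psi^2 w=\partial_\psi^2\phi+\partial_\psi^2\bar w$ and using Theorem \ref{thm:decay0-V} together with the Blasius identity $\partial_\psi^2\bar w=\partial_x\bar w/\sqrt{\bar w}$ (whose size comes from \eqref{fu} and \eqref{blainf}). The product is then $O((x+1)^{-3/2}e^{-cy^2/(x+1)})$, which is strictly better than the first piece and therefore absorbed.

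The main obstacle is the translation of the exponential weight from $\psi$ to $y$ in the regime where $\zeta=y/\sqrt{x+1}$ is not large: there $\psi_2(x,y)/\sqrt{x+1}\sim y^2/(x+1)$ is much smaller than $y/\sqrt{x+1}$, so the $e^{-c\psi^2/(x+1)}$ bound is weaker than one might naively want in terms of $y$. Fortunately this regime is exactly the bounded one where both weights are comparable to constants, and all the actual decay is carried by the algebraic factor $(x+1)^{-3/4}$; this is the same mechanism that made \eqref{xuphi} work, so no new ingredient is needed. Combining the two pieces yields the stated bound on $(N,+\infty)\times\R_+$.
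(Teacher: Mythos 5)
Your proposal is correct and follows essentially the same route as the paper: the same chain-rule decomposition of $2\partial_{xy}u$, the leading term controlled by Theorem \ref{thm:w-decay}, and the second term controlled by bounding $\partial_x\psi_2$ and $\partial_\psi^2 w$ (the paper writes $\partial_\psi^2 w=\partial_x w/\sqrt w$ and invokes Proposition \ref{52}, while you split $\partial_\psi^2 w=\partial_\psi^2\phi+\partial_\psi^2\bar w$; these are equivalent in substance). The weight transfer from $\psi$ to $y$ via \eqref{zeps} and \eqref{yyyorder} is exactly the mechanism the paper uses.
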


\begin{proof} We use $\partial_x$ to denote the derivative in Euler coordinates $(x,y)$ and $\partial_{\tilde{x}}$ to denote the derivative in  Von Mises coordinates $(\tilde{x},\psi).$
By \eqref{PvM} and \eqref{starmain}, we get
\begin{align*}
   \partial_x(2\partial_y u)&=\partial_{\psi \tilde{x}} w+\partial_x\psi\partial_{\psi}^2 w=\partial_{\psi \tilde{x}} w+\partial_x\psi\frac{\partial_{x} w}{\sqrt{w}},
  \\  \partial_x\psi&=\frac{1}{2}\sqrt{w}\int_0^\psi w^{-\frac{3}{2}}
    \partial_{\tilde{x}} wd\psi'.
\end{align*}
From Lemma \ref{lem:com}, Proposition \ref{52} and \eqref{wh}, we infer that
\begin{align*}
    |\partial_x\psi(x,y)|&\leq C\Big|\int_0^{\sqrt{x+1}}w^{-\frac{3}{2}}
    \partial_{\tilde{x}} wd\psi+\int_{\sqrt{x+1}} ^{+\infty} w^{-\frac{3}{2}}
    \partial_{\tilde{x}} wd\psi\Big|
    \\&\leq C\frac{1}{\sqrt{x+1}}\int_0^{1}h^{-\frac{3}{2}}hdh+\frac{C}{\sqrt{x+1}}
    \int_1^{+\infty}e^{-ch^2}dh\\
    &\leq \frac{C}{\sqrt{x+1}},
\end{align*}
where we have used \eqref{fu} and $d\psi=\sqrt{x+1}dh.$ Then our result follows from Theorem \ref{thm:w-decay} and Proposition \ref{52}.
\end{proof}

\bigbreak

\section*{Acknowledgments}
Y. Wang is  supported by NSFC under Grant 12001383. Z. Zhang is partially supported by  NSF of China  under Grant 12171010.

\end{document}